\documentclass[english]{myarticle}


\title{Center Manifolds and Normal Forms for Nonlinearly Periodically Forced DDEs}

\author{Bram Lentjes\thanks{Department of Mathematics, Hasselt University, Diepenbeek Campus, 3590 Diepenbeek, Belgium \email{(bram.lentjes@uhasselt.be)}.}
\and Seppe Dani\"els\thanks{Department of Mathematics, KU Leuven, 3000 Leuven, Belgium  \email{(seppe.daniels@student.kuleuven.be)}.}
\and Meinder Follon\thanks{Department of Mathematics, TU Delft, 2600 AA Delft, The Netherlands \email{(m.follon@student.tudelft.nl)}.}
\and Yuri A. Kuznetsov\thanks{Department of Mathematics, Utrecht University, 3508 TA Utrecht, The Netherlands and Department of Applied
Mathematics, University of Twente, 7500 AE Enschede, The Netherlands \email{(i.a.kouznetsov@uu.nl)}.}}


\DeclareMathOperator{\BC}{BC}
\DeclareMathOperator{\NBV}{NBV}
\DeclareMathOperator{\loc}{loc}
\DeclareMathOperator{\inv}{INV}

\begin{document}
\maketitle

\begin{abstract}
The aim of this paper is to provide an effective framework for analysing bifurcations of equilibria in nonlinearly periodically forced delay differential equations. First, we establish the existence of a periodic smooth finite-dimensional center manifold near a nonhyperbolic equilibrium using the rigorous functional analytic framework of dual semigroups (sun-star calculus). Second, we construct a center manifold parametrization that allows us to describe the local dynamics on the center manifold near the equilibrium in terms of periodically forced normal forms. Third, we present a normalization method to derive explicit computational formulas for the critical normal form coefficients at a bifurcation of interest. In particular, we obtain such formulas for the periodically forced fold and nonresonant Hopf bifurcation. Several examples and indications from the literature confirm the validity and effectiveness of our approach.
\end{abstract}

\begin{keywords}
Delay differential equations, sun-star calculus, center manifold, normal forms, normalization, nonlinearly periodically forced systems
\end{keywords}

\begin{MSCcodes}
34K17, 34K18, 34K19, 34L10
\end{MSCcodes}
\begin{sloppypar}


\section{Introduction} \label{sec:introduction}
Consider the autonomous \emph{classical delay differential equation} 
\begin{equation} \label{eq:IntroDDE}
    \dot{x}(t) = f(x_t), \quad t \geq 0,
\end{equation}
where the unknown $x$ takes values in $\mathbb{R}^n$, and the \emph{history} of $x$ at time $t$ is defined by $x_t(\theta) \coloneqq x(t+\theta)$ for all $\theta \in [-h,0]$. We take the state space $X$ to be the Banach space $C([-h,0], \mathbb{R}^n)$ consisting of all $\mathbb{R}^n$-valued continuous functions defined on $[-h,0]$ equipped with the supremum norm. Here, $0 < h < \infty$ denotes the finite maximal delay and the (nonlinear) function $f : X \to \mathbb{R}^n$ is assumed to be $C^{k}$-smooth for some sufficiently large $k \geq 2$. 

To study solutions of \eqref{eq:IntroDDE}, it is natural to construct a dynamical systems framework analogous to that used for (in)finite-dimensional ordinary differential equations \cite{Kuznetsov2023a,Haragus2011,Guckenheimer1983,Arnold1988}. This involves the development of essential nonlinear tools, such as center manifolds and normal forms near equilibria and cycles, along with a normalization technique that enables the direct computation of normal form coefficients at a bifurcation of interest. These normal form coefficients determine the nature and direction (degenerate, sub- or supercritical) of a bifurcation and provide detailed information on the dynamics near a nonhyperbolic equilibrium or cycle. It turns out that such results have been established for \eqref{eq:IntroDDE} through the application of the rigorous dual perturbation framework (called \emph{sun-star calculus}) developed in \cite{Clement1987,Clement1988,Clement1989,Clement1989a,Diekmann1991}, see also the well-known book \cite{Diekmann1995} for an introduction. Below, we provide a brief overview of the (recent) literature available on this subject.

First, the existence of a $C^{k}$-smooth finite-dimensional center manifold $\mathcal{W}_{\loc}^c(\overline{x}) \subseteq X$ near a nonhyperbolic equilibrium $\overline{x}$ for \eqref{eq:IntroDDE} has been rigorously established, see \cite{Diekmann1991CM,Diekmann1995} for the critical center manifold and \cite{Bosschaert2020} for the parameter-dependent case. Restricting \eqref{eq:IntroDDE} to $\mathcal{W}_{\loc}^c(\overline{x})$ reduces it to a finite-dimensional ODE, allowing the use of classical ODE normal form theory to analyse bifurcations of equilibria. This allowed researchers to derive explicit computational formulas for the normal form coefficients of all codim 1 and 2 bifurcations of equilibria using a normalization technique, see \cite{Bosschaert2020,Diekmann1995,Janssens2010,Bosschaert2024a} for further details, and \cite{Faria1995,Faria1995a,Faria2006} for another approach. These formulas have been implemented in the \verb|MATLAB| package \verb|DDE-BifTool| \cite{Engelborghs2002,Sieber2014} and the \verb|Julia| package \verb|BifurcationKit| \cite{Veltz2020}, and are widely used to analyse equilibrium bifurcations in a broad variety of mathematical models arising across the sciences.

Second, the existence of a periodic $C^{k-1}$-smooth finite-dimensional center manifold $\mathcal{W}_{\loc}^c(\Gamma) \subseteq X$ near a nonhyperbolic cycle $\Gamma$ for \eqref{eq:IntroDDE} has been recently rigorously established in \cite{Article1}. Periodic normal forms are derived in terms of a natural $(\tau,\xi)$-coordinate system on $\mathcal{W}_{\loc}^c(\Gamma)$, see \cite{Article2} for a primary reference and \cite{Iooss1988,Iooss1999,Kuznetsov2023a} for further details. This framework enabled the authors from \cite{Bosschaert2025} to derive explicit computational formulas for all codim 1 bifurcations of limit cycles, which have been implemented in the \verb|Julia| package \verb|PeriodicNormalizationDDEs| \cite{Bosschaert2024c}, that is compatible with \verb|BifurcationKit|. Similar formulas for all codim 2 bifurcations will be derived and incorporated into a software package like \verb|PeriodicNormalizationDDEs| or \verb|DDE-Biftool| in an upcoming publication, employing the periodic normalization method described in \cite{Witte2013,Witte2014,Kuznetsov2005}.

The next obvious step is to use a similar powerful scheme (center manifolds, normal forms and normalization) for nonautonomous classical DDEs of the form
\begin{equation} \tag{DDE} \label{eq:DDE}
    \dot{x}(t) = F(t,x_t), \quad t \geq s,
\end{equation}
where $s \in \mathbb{R}$ denotes a starting time. Similarly to \eqref{eq:IntroDDE}, we assume that $X$ is our state space and the (non)linear map $F : \mathbb{R} \times X \to \mathbb{R}^n$ is $C^{k}$-smooth. Moreover, we assume that \eqref{eq:DDE} admits a constant (equilibrium) solution $\overline{x}$. It is well known that the bifurcation theory of  equilibrium solutions for general nonautonomous ODEs, let alone for the more challenging case \eqref{eq:DDE}, remains underdeveloped and is currently an active area of research \cite{kloeden2020,Anagnostopoulou2023,Rasmussen2007,Rasmussen2007a,Caraballo2016}. Therefore, we will restrict ourselves to a special case which arises quite often in applications, namely that of  periodically forced DDEs with purely nonlinear driving force that we call \emph{nonlinearly periodically forced DDEs}. Such systems exhibit linear autonomous behaviour, but are nonlinearly periodically driven by an external source/force. To exploit this structure, it is convenient to translate $\overline{x}$ towards the origin via $y = x -\overline{x}$ and to study solutions of
\begin{equation} \label{eq:introDDE3}
    \dot{y}(t) = Ly_t + G(t,y_t), \quad t \geq s,
\end{equation}
near the origin, where the linear part $L \coloneqq D_2F(t,\overline{x}) \in \mathcal{L}(X,\mathbb{R}^n)$ is assumed to be time-independent and the nonlinear part $G(t,\varphi) \coloneqq F(t,\overline{x} + \varphi) - F(t,\overline{x}) - L\varphi$ is assumed to be $T$-periodic in the first argument. Note that \eqref{eq:introDDE3} also naturally emerges as a mixture of the two described above scenarios: the operator $L$ represents the autonomous linear part from equilibria, while the nonautonomous nonlinearity $G$ captures the periodic component from limit cycles. This paper aims to develop a center manifold and normal form theory, as well as a normalization method for bifurcations of equilibria in the nonlinearly periodically forced system \eqref{eq:introDDE3}. 

Our approach is primarily inspired by the work of Iooss and collaborators \cite{Haragus2011,Elphick1987} on nonlinearly periodically forced ODEs. Real-world applications of nonlinearly periodically forced systems naturally occur in the analysis of (auto)parametric resonance in mechanical systems and oscillators, a phenomenon with numerous engineering applications. For an introduction to this topic, we refer the reader to \cite{Tondl2000,Verhulst2002,Verhulst2009,Hoveijn1995} and the references therein.

\subsection{Challenges and overview}
The paper is organised as follows. In \Cref{sec:CMtheorem}, we prove the existence of a $T$-periodic $C^k$-smooth finite-dimensional center manifold $\mathcal{W}_{\loc}^c(\overline{x})$ near $\overline{x}$ for the nonlinearly periodically forced system \eqref{eq:DDE}. In contrast to the earlier constructed center manifolds for equilibria and cycles in $X$ for the autonomous DDE \eqref{eq:IntroDDE}, the center manifold $\mathcal{W}_{\loc}^c(\overline{x})$ for the nonautonomous system \eqref{eq:DDE} becomes a nonautonomous invariant subset of $\mathbb{R} \times X$. Our approach relies on the Lyapunov-Perron method, which we apply to a suitable variation of constants formula adapted to nonlinearly periodically forced systems within the sun-star calculus framework. This formulation enables us, under the relatively mild assumptions given in \Cref{hyp:1} and \Cref{hyp:2}, to prove a center manifold theorem that extends to a considerably broader class of nonlinearly periodically forced evolution equations, see \Cref{thm:LCMT} for the general result and \Cref{remark:LCMTextension} for some further extensions. In particular, this method also provides a direct proof on the existence of such center manifolds for nonlinearly periodically forced renewal equations \cite{Breda2020,Diekmann2008,Diekmann1995}. As a consequence, we recover a well-known center manifold result \cite[Theorem 3.9]{Haragus2011} from finite-dimensional nonlinearly periodically forced ODEs as discussed in \Cref{remark:CMODE}.

In \Cref{sec:NFtheorem}, we show that the local dynamics near the equilibrium $\overline{x}$ of \eqref{eq:DDE} on $\mathcal{W}_{\loc}^c(\overline{x})$ can be analysed in terms of \emph{periodically forced normal forms}. We extend the results of Iooss et al. \cite{Haragus2011,Elphick1987} on periodically forced normal forms from ODEs to DDEs. To achieve this, we parametrize any solution $x_t$ of \eqref{eq:DDE} on $\mathcal{W}_{\loc}^c(\overline{x})$ as
\begin{equation} \label{eq:parametrization}
    x_t = \overline{x} + Q_0 \xi + H(t,\xi),
\end{equation}
where $\xi \in \mathbb{R}^{n_0}$ represents a normal form coordinate, $Q_0 : \mathbb{R}^{n_0} \to X_0$ is a coordinate map capturing the linear component in the $n_0$-dimensional center subspace $X_0$, and $H : \mathbb{R} \times \mathbb{R}^{n_0} \to X$ represents the nonlinear part of the center manifold expressed in terms of the $(t,\xi)$-coordinate system. The normal form coordinate $\xi$ evolves according to a specific nonlinearly periodically forced ODE in $\mathbb{R}^{n_0}$, where the linear part is given by the associated Jordan matrix associated to $X_0$ and the nonlinear part can be computed explicitly up to order $k$. We have found that the (critical) periodically forced normal forms for \eqref{eq:DDE} are precisely identical to those for ODEs derived in \cite[Theorem 3.5.2]{Haragus2011}. The main result is stated in \Cref{thm:periodicnormalform}. This theorem can be regarded as a periodically forced version of \cite[Theorems 24-26]{Article2}, where periodic normal forms for bifurcations of limit cycles for \eqref{eq:IntroDDE} are derived. An alternative method for deriving periodically forced normal forms for \eqref{eq:DDE} is proposed in the work of Faria \cite{Faria1997}, which builds on the formal adjoint approach for DDEs developed in \cite{Hale1993}. However, the main result in \cite[Theorem 6.4]{Faria1997} relies on the assumption of the existence of a periodic center manifold $\mathcal{W}_{\loc}^c(\overline{x})$, which had not yet been rigorously established at that time. This gap is now closed due to the center manifold result from \Cref{sec:DDEs}. 

In \Cref{sec:spectral}, we first recall the explicit expressions for the (adjoint) (generalized) eigenfunctions for autonomous linear DDEs, which are expressed in terms of the Jordan chains of the characteristic matrix $\Delta(z) \in \mathbb{C}^{n \times n}$. In the next section, where explicit computational formulas for the critical normal form coefficients are derived, we will encounter a specific class of periodic linear operator equations. Solutions to these equations can be described in terms of a characteristic operator $\boldsymbol{\Delta}(z) : C_T^1(\mathbb{R}, \mathbb{C}^n) \subseteq C_T(\mathbb{R}, \mathbb{C}^n) \to C_T(\mathbb{R}, \mathbb{C}^n)$, which naturally extends $\Delta(z)$ in a $T$-periodic fashion. Despite acting on an infinite-dimensional function space, the key advantage for the analysis is that the (inhomogeneous) periodic linear DDE $\boldsymbol{\Delta}(z)\boldsymbol{q} = \boldsymbol{f}$ is equivalent to the much simpler algebraic system $\Delta(z + im\omega_T)q_m = f_m$ for all $m \in \mathbb{Z}$ in Fourier space. Here, $q_m$ and $f_m$ denote the Fourier coefficients of $\boldsymbol{q}$ and $\boldsymbol{f}$ respectively, and $\omega_T \coloneqq 2\pi/T$ is the angular frequency. Next, we discuss solvability of these equations and invoke a suited Fredholm alternative that will be used extensively in the next section.

In \Cref{sec:normalization}, we derive explicit computational formulas for the critical normal form coefficients corresponding to the periodically forced fold \eqref{eq:criticalcoeffffold} and nonresonant Hopf \eqref{eq:criticalcoeffHopf} bifurcation in nonlinearly periodically forced DDEs. This is accomplished by applying the center manifold parametrization \eqref{eq:parametrization} together with the homological equation \eqref{eq:homological}. The expressions for these normal form coefficients are explicit, ready-to-implement in a software package and straightforward to evaluate (numerically). Furthermore, they reduce naturally to the classical formulas for the critical normal form coefficients for the fold and Hopf bifurcation in autonomous systems ($T = 0$), see \Cref{remark:fold} and \Cref{remark:Hopf} for further details. The presented approach is broadly applicable and extends naturally to the computation of normal form coefficients for bifurcations of equilibria of higher codimension. 

In \Cref{sec:Examples}, we illustrate our results using two rather simple systems. First, we analyse a periodically forced fold bifurcation in a scalar model equation by explicitly computing the associated critical normal form coefficient. Second, we study the nonlinearly periodically forced Wright equation and compute the first Lyapunov coefficient $l_1$ for nonresonant Hopf bifurcations. This example demonstrates how periodic forcing influences the classical bifurcation picture of Wright's equation, leading to resonance phenomena, changes in criticality, and degenerate bifurcation scenarios that do not arise in the autonomous setting. Compared with the derivation of $l_1$ by Faria \cite[Section 7]{Faria1997}, our method is significantly simpler and does not require solving additional boundary-value problems.

In \Cref{sec:outlook}, we conclude the paper with a summary of our main contributions and a brief discussion of the parameter-dependent setting. In particular, we outline directions for future software development, including the incorporation of our methods into existing numerical bifurcation tools.

\section{Periodically forced center manifold theorem} \label{sec:CMtheorem}
As already mentioned in \Cref{sec:introduction}, our first goal is to prove an abstract center manifold result (\Cref{thm:LCMT}) for nonlinearly periodically forced systems within the general framework of dual perturbation theory. To establish this result, some preliminary work is needed. In \Cref{subsec:functionalanalytic}, we give a brief overview of the sun-star calculus framework, particularly focusing on (non)linear perturbations. Standard references for this part include the classic works \cite{Diekmann1995,Neerven1992} and the sun-star articles mentioned in \Cref{sec:introduction}. In \Cref{label:spectraldecom}, we turn our attention to the (semi)linear theory and formulate \Cref{hyp:1} and \Cref{hyp:2}, which are crucial for stating our main theorem. In \Cref{subsec:modification}-\Cref{sec:properties}, we address the nonlinear aspects, like proving the center manifold theorem and deriving key properties of this manifold. In \Cref{sec:DDEs}, we apply this abstract result to the specific case of the nonlinearly periodically forced system \eqref{eq:DDE} for which we finally obtain a center manifold $\mathcal{W}_{\loc}^c(\overline{x})$ around the equilibrium $\overline{x}$, meaning that $\mathbb{R} \times \{\overline{x}\} \subseteq \mathcal{W}_{\loc}^c(\overline{x})$.

To keep this technical section concise, we have not aimed to completely rewrite \cite[Chapter IX]{Diekmann1995} (or \cite[Section 5-6]{Janssens2020}) and \cite[Section 2-3]{Article1} on center manifolds near equilibria and cycles within the sun-star calculus framework. Instead, we focus on proving the essential results, while referring to these references for proofs that carry over with minimal adjustments in our setting. All unreferenced claims related to basic properties of sun-star calculus (for DDEs) can be found in the mentioned references.

\subsection{Functional analytic framework} \label{subsec:functionalanalytic}
Let $T_0 := \{T_0(t)\}_{t \geq 0}$ be a strongly continuous semigroup of bounded linear operators ($\mathcal{C}_0$-semigroup) on a real or complex Banach space $X$, with (infinitesimal) generator $A_0$ and domain $\mathcal{D}(A_0)$. The \emph{dual semigroup} $T_0^\star := \{T_0^\star(t)\}_{t \geq 0}$, where each $T_0^\star(t): X^\star \to X^\star$ is the adjoint of $T_0(t)$, forms a semigroup on the topological dual space $X^\star$ of $X$. If $X$ is not reflexive, $T_0^\star$ is generally only weak$^\star$ continuous on $X^\star$. This limitation is also reflected on the generator level since the adjoint $A_0^\star$ of $A_0$ is merely the weak$^\star$ generator of $T_0^\star$ and typically has a non-dense domain. To address this, one considers the maximal subspace of strong continuity
\begin{equation*}
    X^\odot := \left\{ x^\star \in X^\star : t \mapsto T_0^\star(t)x^\star \text{ is norm continuous on } [0, \infty) \right\},
\end{equation*}
which is a norm-closed $T_0^\star(t)$-invariant weak$^\star$ dense subspace of $X^\star$. Moreover, $X^\odot$ is also characterized by the norm closure of $\mathcal{D}(A_0^\star)$ in $X^\star$. The restriction of $T_0^\star$ to $X^\odot$ defines a $\mathcal{C}_0$-semigroup $T_0^\odot$ on $X^\odot$, whose generator $A_0^\odot$ equals the part of $A_0^\star$ in $X^\odot$:
\begin{equation*}
    \mathcal{D}(A_0^\odot) = \left\{ x^\odot \in \mathcal{D}(A_0^\star) : A_0^\star x^\odot \in X^\odot \right\}, \quad A_0^\odot x^\odot = A_0^\star x^\odot.
\end{equation*}
At this point, we have a $\mathcal{C}_0$-semigroup $T_0^\odot$ with generator $A_0^\odot$ on the Banach space $X^\odot$, mirroring the original structure on $X$. Repeating the construction, we obtain on the dual space $X^{\odot \star}$ the weak$^\star$ continuous adjoint semigroup $T_0^{\odot \star}$ with weak$^\star$ generator $A_0^{\odot \star}$. Restricting this semigroup to the maximal subspace of strong continuity $X^{\odot \odot}$ yields a $\mathcal{C}_0$-semigroup $T_0^{\odot \odot}$ with generator $A_0^{\odot \odot}$ coinciding with the part of $A_0^{\odot \star}$ in $X^{\odot \odot}$. Let $\langle \cdot,\cdot \rangle$ denote the natural duality pairing and introduce the canonical embedding $j: X \to X^{\odot \star}$ defined by
\begin{equation} \label{eq:j}
    \langle jx, x^\odot \rangle := \langle x^\odot, x \rangle, \quad \forall x \in X,\ x^\odot \in X^\odot,
\end{equation}
which maps $X$ into $X^{\odot \odot}$. If $j$ maps $X$ onto $X^{\odot \odot}$, then $X$ is said to be \emph{$\odot$-reflexive} with respect to $T_0$. In the study of classical DDEs (\Cref{sec:DDEs}), $\odot$-reflexivity with respect to $T_0$ holds, and it massively simplifies the upcoming analysis. In the absence of $\odot$-reflexivity, it becomes for example less clear under which conditions the weak$^\star$ integrals from \eqref{eq:LAIE} and \eqref{eq:NLAIE} are well-defined, see \cite{Clement1989a,Janssens2019,Janssens2020,Spek2020} and \cite[Chapter III.8]{Diekmann1995} for further details on the non-$\odot$-reflexive setting.

With the abstract duality framework in place, we now turn to the analysis of bounded linear perturbations. Let $B : X \to X^{\odot \star}$ be a bounded linear operator. Then, there exists a unique $\mathcal{C}_0$-semigroup $T := \{T(t)\}_{t \geq 0}$ on $X$ that satisfies the linear abstract integral equation
\begin{equation} \label{eq:LAIE}
    T(t)\varphi = T_0(t)\varphi + j^{-1} \int_0^t T_0^{\odot \star}(t - \tau) B T(\tau) \varphi  d\tau, \quad \varphi \in X,
\end{equation}
for $t \geq 0$, where the integral must be interpreted as a weak$^\star$ Riemann integral. Under the running assumption of $\odot$-reflexivity, it is known that this particular integral takes values in $j(X)$, meaning that \eqref{eq:LAIE} is well-defined. On $X^{\odot \star}$, the perturbation $B$ appears additively in the action of $A_0^{\odot \star}$ as
\begin{equation} \label{eq:Asunstar}
    \mathcal{D}(A^{\odot \star}) = \mathcal{D}(A_0^{\odot \star}), \quad A^{\odot \star} = A_0^{\odot \star} + B j^{-1}.
\end{equation}
The generator $A$ of the perturbed semigroup $T$ can be recovered as the part of $A^{\odot \star}$ in $X^{\odot \odot}$ as
\begin{equation*}
    \mathcal{D}(A) = \left\{ \varphi \in X : j\varphi \in \mathcal{D}(A_0^{\odot \star}) \text{ and } A_0^{\odot \star} j\varphi + B\varphi \in X^{\odot \odot} \right\}, \quad A\varphi = j^{-1} ( A_0^{\odot \star} j\varphi + B\varphi ),
\end{equation*}
and we observe that the perturbation $B$ naturally moves back into the domain of the generator $A$.

Let us now turn attention to periodic nonlinear perturbations. Let $R : \mathbb{R} \times X \to X^{\odot \star}$ be a $C^{k}$-smooth operator for some $k \geq 1$, which is $T$-periodic in the first component and satisfies
\begin{equation} \label{eq:Rassumptions}
    R(t,0) = 0, \quad D_2 R(t,0) = 0, \quad \forall t \in \mathbb{R}.
\end{equation}
Consider the time-dependent nonlinear abstract integral equation
\begin{equation} \label{eq:NLAIE}
    u(t) = T(t-s) \varphi + j^{-1} \int_s^t T^{\odot \star}(t-\tau) R(\tau,u(\tau)) d\tau, \quad \varphi \in X,
\end{equation}
for $t \geq s$, where $s \in \mathbb{R}$ is a starting time. Since the nonlinearity $R$ is $C^k$-smooth, recall from the mean value inequality in Banach spaces \cite[Corollary 3.2]{Coleman2012} that $R$ is locally Lipschitz in the second component. A standard contraction argument shows that for any $\varphi \in X$ there exists a unique (maximal) solution $u_\varphi$ of \eqref{eq:NLAIE} on some (maximal) interval $I_\varphi \coloneqq [s,t_\varphi)$ with $s < t_\varphi \leq \infty$. The family of all such solutions naturally defines a time-dependent semiflow $S : \mathcal{D}(S) \subseteq \mathbb{R}^2 \times X \to X$ defined by 
\begin{equation} \label{eq:Sprocess}
    \mathcal{D}(S) \coloneqq \{(t,s,\varphi) \in [s,\infty) \times \mathbb{R} \times X : t \in I_\varphi \}, \quad S(t,s,\varphi) \coloneqq u_\varphi(t).
\end{equation}
In the framework of nonautonomous dynamical systems, the function $S$ is also called a \emph{process}, see \cite{Kloeden2011,Anagnostopoulou2023} for more information on this topic. Since $0 \in X$ is an equilibrium (stationary point) of $S$, we know that $I_0 = [s,\infty)$ and $S(t,s,0) = 0$ for all $t \geq s$.

\subsection{Spectral decomposition and the linear (in)homogeneous equation} \label{label:spectraldecom}
The construction from the previous section is, in general, not sufficient to prove the existence of a periodic smooth finite-dimensional center manifold for \eqref{eq:NLAIE}. To guarantee the existence of such a manifold, additional assumptions are required. This necessity was already observed in \cite{Diekmann1991CM,Diekmann1995}, where the authors assumed the existence of a direct sum decomposition of $X^{\odot \star}$ supplemented by additional assumptions at the $\odot \star$-level. However, since the structure of $X^{\odot \star}$ depends on the specific evolution equation of interest, it is essential to compute $X^{\odot \star}$ and its associated $\odot\star$-tools explicitly in order to verify the required assumptions. For this reason, it is often more practical to formulate the relevant hypothesis directly in the original space $X$ and subsequently lift it to $X^{\odot \star}$ at the abstract level, an approach also emphasized in \cite{Janssens2020,LentjesCMODE}. Since our linear part is autonomous, we can work with \cite[H-II]{Janssens2020} as stated below.
\begin{hypothesis} \label{hyp:1}
The space $X$ and the $\mathcal{C}_0$-semigroup $T$ on $X$ have the following properties:
\begin{enumerate}
    \item $X$ admits a direct sum decomposition
    \begin{equation} \label{eq:decomposition X hyp}
        X = X_{-} \oplus X_0 \oplus X_{+},
    \end{equation}
    where each summand is closed.
    \item The subspaces $X_{i}$ are positively $T$-invariant for $i \in \{-,0,+\}$.
    \item $T$ extends to a $\mathcal{C}_0$-group on $X_i$ for $i \in \{0,+\}$.
    \item The decomposition \eqref{eq:decomposition X hyp} is an exponential trichotomy on $\mathbb{R}$, meaning that there exist $a < 0 < b$ such that for every $\varepsilon > 0$ there exists a $K_\varepsilon > 0$ such that
    \begin{alignat*}{2}
        \|T(t)\varphi\| &\leq K_\varepsilon e^{a(t-s)} \|\varphi\|, \quad& \forall t \geq s,\ \varphi \in X_{-}, \\
        \|T(t)\varphi\| &\leq K_\varepsilon e^{\varepsilon|t-s|} \|\varphi\|, \quad& \forall t \in \mathbb{R},\ \varphi \in X_{0}, \\
        \|T(t)\varphi\| &\leq K_\varepsilon e^{b(t-s)} \|\varphi\|, \quad& \forall t \leq s,\ \varphi \in X_{+}.
    \end{alignat*}
\end{enumerate}
We call $X_{-},X_{0}$ and $X_{+}$ the \emph{stable subspace}, \emph{center subspace} and \emph{unstable subspace}, respectively. \hfill $\lozenge$
\end{hypothesis}

We call the zero equilibrium of \eqref{eq:NLAIE} \emph{hyperbolic} when the center subspace $X_0$ is trivial, and \emph{nonhyperbolic} when $X_0$ is a nontrivial subspace of $X$. However, this distinction plays no role in the analysis that follows.

The procedure for lifting \Cref{hyp:1} towards $X^{\odot \star}$ is detailed in \cite[Appendix~A]{Janssens2020}, see also \cite[Appendix A.1]{Article1} for a $T$-periodic variant. A key aspect of this lifting process is that the standard spectral projections $P_i : X \to X$ onto the subspaces $X_i$, defined via the classical Dunford contour integral formula, can be lifted to corresponding projections $P_i^{\odot \star} : X^{\odot \star} \to X^{\odot \star}$ onto the subspaces $X_i^{\odot \star}$ for each $i \in \{-,0,+\}$. Moreover, we will also require \cite[H-III]{Janssens2020}, which we recall below.

\begin{hypothesis} \label{hyp:2}
The subspace $X_{i}^{\odot \star}$ is contained in $j(X_i)$ for $i \in \{0,+\}$. \hfill $\lozenge$
\end{hypothesis}

In practical applications, these hypotheses are typically verified through a spectral decomposition of the spectrum $\sigma(A)$ of the (complexification of the) generator $A$. For further details regarding the complexification procedure, we refer to \cite[Sections~III.7 and~IV.2]{Diekmann1995}. The following result applies to a broad class of $\mathcal{C}_0$-semigroups.

\begin{theorem}[{\cite[Theorem 28]{Janssens2020}}] \label{thm:spectralgab}
Suppose that $T$ is eventually norm continuous and let $\sigma(A)$ be the pairwise disjoint union of
\begin{equation*}
    \sigma_{-} \coloneqq \{ \lambda \in \sigma(A) : \Re \lambda < 0\}, \quad \sigma_{0} \coloneqq \{ \lambda \in \sigma(A) : \Re \lambda = 0\}, \quad \sigma_{+} \coloneqq \{ \lambda \in \sigma(A) : \Re \lambda > 0\}, \quad
\end{equation*}
where $\sigma_{-}$ is closed, while $\sigma_0$ and $\sigma_{+}$ are compact. If $\sup_{\lambda \in \sigma_{-}} \Re \lambda < 0 < \inf_{\lambda \in \sigma_{+}} \Re \lambda$, then \emph{\Cref{hyp:1}} and \emph{\Cref{hyp:2}} hold for $T$ on $X$.
\end{theorem}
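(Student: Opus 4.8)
The plan is to establish \cref{hyp:1} by Riesz--Dunford functional calculus applied to the generator $A$, to read off the group property and the exponential trichotomy from the spectral mapping properties of eventually norm continuous semigroups, and then to deduce \cref{hyp:2} by lifting the spectral decomposition to the $\odot\star$-level and invoking $\odot$-reflexivity. The one step that genuinely forces the eventual norm continuity hypothesis — and which I expect to be the main obstacle — is the identity between the growth bound and the spectral bound on the stable part; everything else is bookkeeping with the functional calculus and the sun-star machinery.

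First I would observe that, since $\sigma_0 \subseteq i\mathbb{R}$ is compact and $\sup_{\lambda\in\sigma_-}\Re\lambda < 0 < \inf_{\lambda\in\sigma_+}\Re\lambda$, the three pieces $\sigma_-,\sigma_0,\sigma_+$ are contained in disjoint vertical strips and hence are pairwise separated by a positive distance; each is therefore open and closed in $\sigma(A)$, and $\sigma_0,\sigma_+$ are moreover compact. The Riesz projections $P_0,P_+$ associated with $\sigma_0,\sigma_+$ via a Dunford contour integral of $R(\cdot,A)$ are then bounded and mutually annihilating, so that $P_- := I - P_0 - P_+$ is also a bounded projection and $X_i := \operatorname{Ran} P_i$ furnishes the closed direct sum \eqref{eq:decomposition X hyp}, i.e.\ item (1). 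Commutation of $P_i$ with $T(t)$ gives positive $T$-invariance, i.e.\ item (2). Using $AR(z,A) = zR(z,A) - I$ together with $\oint dz = 0$, one sees that $AP_i$ is itself a bounded operator for $i\in\{0,+\}$, so $A|_{X_0}$ and $A|_{X_+}$ are bounded; hence $T$ restricts to a uniformly continuous $\mathcal{C}_0$-group on $X_0$ and on $X_+$, which is item (3).

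For the trichotomy estimates in item (4), I would exploit that $T(\cdot)P_0$ and $T(\cdot)P_+$ are norm-analytic in $t$, so that $T|_{X_-} = T(\cdot) - T(\cdot)P_0 - T(\cdot)P_+$ is again eventually norm continuous on $X_-$; for such semigroups the growth bound coincides with the spectral bound, so $\omega_0(T|_{X_-}) = \sup_{\lambda\in\sigma_-}\Re\lambda < 0$ and any $a$ strictly in between yields the stable estimate. On $X_0$ and $X_+$ the generators are bounded, so the spectral mapping theorem is automatic: the forward and backward growth bounds on $X_0$ are both $\max_{\lambda\in\sigma_0}\Re\lambda = 0$, which gives the $\varepsilon$-estimate after bounding $\|T(t)|_{X_0}\|$ on compact time intervals, while the backward growth bound on $X_+$ is $-\inf_{\lambda\in\sigma_+}\Re\lambda < 0$, giving the unstable estimate for any $0 < b < \inf_{\lambda\in\sigma_+}\Re\lambda$. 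Taking $K_\varepsilon$ to be the maximum of the three constants completes \cref{hyp:1}.

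Finally, for \cref{hyp:2}, I would lift the construction to the sun-star level. By \eqref{eq:Asunstar}, $A^{\odot\star}$ is a bounded perturbation of $A_0^{\odot\star}$ with spectrum $\sigma(A)$, and the Dunford integrals of $R(\cdot,A^{\odot\star})$ over the same contours produce the lifted projections $P_i^{\odot\star}$, which satisfy $P_i^{\odot\star} j = j P_i$. For $i\in\{0,+\}$ the set $\sigma_i$ is again a compact spectral set, so, exactly as in the bounded case above, $A^{\odot\star}|_{X_i^{\odot\star}}$ is bounded, and in particular $X_i^{\odot\star} \subseteq \mathcal{D}(A^{\odot\star}) \subseteq X^{\odot\odot}$. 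Since $\odot$-reflexivity with respect to $T_0$ is in force and is preserved under the bounded perturbation, we have $X^{\odot\odot} = j(X)$; thus every $x^{\odot\star}\in X_i^{\odot\star}$ equals $jx$ for some $x\in X$, and applying $P_i^{\odot\star}$ gives $jx = P_i^{\odot\star} jx = j P_i x$, so by injectivity of $j$ we get $x = P_i x\in X_i$ and hence $x^{\odot\star}\in j(X_i)$. This yields $X_i^{\odot\star}\subseteq j(X_i)$, which is \cref{hyp:2}. As noted, the delicate point throughout is that a general $\mathcal{C}_0$-semigroup only satisfies $\omega_0 \geq s(A)$, so the equality used on $X_-$ must be justified by carefully checking that eventual norm continuity descends to the stable part.
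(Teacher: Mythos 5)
The paper does not prove this statement at all: it is imported verbatim from the cited reference (Janssens 2020, Theorem 28), so there is no in-paper argument to compare against, and your reconstruction is correct and follows exactly the route one expects that reference to take — Riesz--Dunford projections for the compact spectral sets $\sigma_0$ and $\sigma_+$, boundedness of the restricted generators giving the group property, the spectral mapping theorem for eventually norm continuous semigroups giving growth bound $=$ spectral bound on the stable part, and the sun-star lifting for \cref{hyp:2}. Two small remarks: the subtraction argument for eventual norm continuity on $X_-$ is unnecessary, since the restriction of an eventually norm continuous semigroup to a closed invariant subspace is trivially eventually norm continuous; and the one step you assert without justification, $\mathcal{D}(A^{\odot\star}) \subseteq X^{\odot\odot}$, does hold because $\mathcal{D}(A^{\odot\star}) = \mathcal{D}(A_0^{\odot\star})$ by \eqref{eq:Asunstar}, $X^{\odot\odot}$ is the norm closure of $\mathcal{D}(A_0^{\odot\star})$ in $X^{\odot\star}$, and $\odot$-reflexivity (preserved under the bounded perturbation $B$) identifies $X^{\odot\odot}$ with $j(X)$.
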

To construct a center manifold for \eqref{eq:NLAIE}, we will be interested in solutions to this equation that exist for all time. It is therefore helpful to write this equation into translation invariant form
\begin{equation} \label{eq:inhomogenous}
    u(t) = T(t-s)u(s) + j^{-1}  \int_s^t T^{\odot \star}(t-\tau)R(\tau,u(\tau)) d\tau, \quad - \infty < s \leq t < \infty.
\end{equation}
A key challenge in developing a center manifold theory for infinite-dimensional systems is that the linearized version of \eqref{eq:inhomogenous} may admit unbounded solutions in $X_0$. This necessitates working within a function space that permits limited exponential growth as $t \to \pm \infty$. Therefore, let us introduce for a Banach space $E$, exponent $\eta \in \mathbb{R}$ and starting time $s \in \mathbb{R}$, the Banach space
\begin{equation*}
    \BC_{s}^{\eta}(\mathbb{R},E) := \bigg \{ f \in C(\mathbb{R},E) : \sup_{t \in \mathbb{R}} e^{-\eta|t-s|}\|f(t)\| < \infty \bigg \},
\end{equation*}
with the weighted supremum norm $\|f\|_{\eta,s} := \sup_{t \in \mathbb{R}} e^{-\eta|t-s|}\|f(t)\|$. Before we start studying the inhomogeneous equation \eqref{eq:inhomogenous}, let us first derive some properties of the linear homogeneous equation
\begin{equation} \label{eq:homogeneous}
    u(t) = T(t-s)u(s), \quad -\infty < s \leq t < \infty.
\end{equation}
The following result is a $s$-dependent version of \cite[Lemma IX.2.4]{Diekmann1995} (or \cite[Lemma 29]{Janssens2020}) and shows that all solutions to \eqref{eq:homogeneous} in $X_0$ belong to $\BC_{s}^{\eta}$. As the proof closely follows that of the cited references, it is omitted here. We also refer to \cite[Proposition 2]{Article1} for a similar result in the $T$-periodic setting.

\begin{lemma} \label{lemma:X0}
Let $\eta \in (0,\min\{-a,b\})$ and $s \in \mathbb{R}$ be given. Then
\begin{equation*}
    X_0 = \{ \varphi \in X : \mbox{there exists a solution of \eqref{eq:homogeneous} on $\mathbb{R}$ through $\varphi$ at time $s$ belonging to } \BC_{s}^{\eta}(\mathbb{R},X) \}.
\end{equation*}
\end{lemma}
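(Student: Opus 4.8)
I would prove the two set inclusions separately; the inclusion $X_0 \subseteq \{\,\varphi \in X : \dots\,\}$ is the easy one, and the reverse inclusion is where the exponential trichotomy of \cref{hyp:1} is used in an essential way.

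For the easy inclusion, take $\varphi \in X_0$. Since $T$ extends to a $\mathcal{C}_0$-group on $X_0$ by \cref{hyp:1}, the function $u(t) \coloneqq T(t-s)\varphi$ is defined for all $t \in \mathbb{R}$ and satisfies $u(t) = T(t-\sigma)u(\sigma)$ for all $\sigma \le t$, i.e.\ it is a solution of \eqref{eq:homogeneous} through $\varphi$ at time $s$. Applying the center estimate of \cref{hyp:1} with $\varepsilon \coloneqq \eta > 0$ gives $\|u(t)\| \le K_\eta e^{\eta|t-s|}\|\varphi\|$, hence $\|u\|_{\eta,s} \le K_\eta\|\varphi\| < \infty$ and $u \in \BC_{s}^{\eta}(\mathbb{R},X)$, so $\varphi$ belongs to the right-hand set.

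For the reverse inclusion, suppose $u \in \BC_{s}^{\eta}(\mathbb{R},X)$ solves \eqref{eq:homogeneous} with $u(s) = \varphi$; I want to show $P_-\varphi = P_+\varphi = 0$, where $P_i : X \to X$ is the spectral projection onto $X_i$ associated with \eqref{eq:decomposition X hyp}. Each $P_i$ is bounded and, by positive $T$-invariance of the summands together with the direct sum, commutes with $T(t)$ for $t \ge 0$; hence $u_i \coloneqq P_i u$ again lies in $\BC_{s}^{\eta}(\mathbb{R},X_i)$ and solves \eqref{eq:homogeneous} in $X_i$ with $u_i(s) = P_i\varphi$. For the stable component, for any $\sigma \le s$ one writes $u_-(s) = T(s-\sigma)u_-(\sigma)$ and uses the $X_-$-estimate of \cref{hyp:1} (with $\varepsilon = \eta$) together with $u_- \in \BC_{s}^{\eta}$:
\[
\|u_-(s)\| \le K_\eta\, e^{a(s-\sigma)}\|u_-(\sigma)\| \le K_\eta\,\|u_-\|_{\eta,s}\, e^{(a+\eta)(s-\sigma)},
\]
and since $a + \eta < 0$ the right-hand side tends to $0$ as $\sigma \to -\infty$, forcing $u_-(s) = 0$.

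The step I expect to be the main obstacle is the unstable component, because \cref{hyp:1} only furnishes a \emph{backward} upper bound on $X_+$, whereas what is needed is a \emph{forward} lower bound. The fix is to exploit that $T$ restricts to a group on $X_+$: substituting $\chi = T(-r)\psi$ into the $X_+$-estimate $\|T(-r)\psi\| \le K_\eta e^{-br}\|\psi\|$ (valid for $r \ge 0$, $\psi \in X_+$) yields $\|T(r)\chi\| \ge K_\eta^{-1} e^{br}\|\chi\|$ for all $r \ge 0$, $\chi \in X_+$. Applying this to $u_+(t) = T(t-s)u_+(s)$ for $t \ge s$ and combining with the growth bound $\|u_+(t)\| \le \|u_+\|_{\eta,s}\, e^{\eta(t-s)}$ coming from $u_+ \in \BC_{s}^{\eta}$ gives
\[
\|u_+(s)\| \le K_\eta\,\|u_+\|_{\eta,s}\, e^{(\eta-b)(t-s)}, \qquad t \ge s,
\]
so letting $t \to \infty$ (using $\eta - b < 0$) forces $u_+(s) = 0$. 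Therefore $\varphi = P_0\varphi \in X_0$, which together with the first inclusion proves the claim. The only place the precise range $\eta \in (0,\min\{-a,b\})$ is used is to make $\varepsilon = \eta$ admissible in \cref{hyp:1} and to make both exponents $a+\eta$ and $\eta-b$ strictly negative; otherwise the argument is a verbatim $s$-dependent reprise of \cite[Lemma IX.2.4]{Diekmann1995}, which is why the proof is omitted in the paper.
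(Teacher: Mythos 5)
Your proof is correct and is essentially the standard argument of \cite[Lemma IX.2.4]{Diekmann1995} (resp.\ \cite[Lemma 29]{Janssens2020}) that the paper defers to: the forward inclusion via the group on $X_0$ and the center estimate with $\varepsilon=\eta$, and the reverse inclusion by projecting onto $X_\pm$ and killing those components with the trichotomy bounds, including the correct inversion of the backward $X_+$ estimate into a forward lower bound. Nothing is missing.
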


In the construction of a center manifold, an important step is the introduction of a bounded linear operator that associates with each continuous forcing function $f : \mathbb{R} \to X^{\odot \star}$ a solution of 
\begin{equation} \label{eq:inhomogeneous CMT}
    u(t) = T(t-s)u(s) + j^{-1} \int_s^t T^{\odot \star}(t-\tau)f(\tau) d\tau, \quad -\infty < s \leq t < \infty,
\end{equation}
on $\mathbb{R}$, with specific behaviour at $t=s$ and $t \to \pm \infty$. In fact, we are looking for a pseudo-inverse of exponentially bounded solutions to \eqref{eq:inhomogeneous CMT}. To do this, consider for any $\eta \in (0, \min \{-a,b \})$ and $s \in \mathbb{R}$ the operator $\mathcal{K}_{s}^{\eta} : \BC_s^\eta(\mathbb{R},X^{\odot \star}) \to \BC_s^\eta(\mathbb{R},X)$ defined by
\begin{align}
\begin{split} \label{eq:Ketas}
    (\mathcal{K}_s^\eta f)(t) &\coloneqq j^{-1} \int_s^t T^{\odot \star}(t-\tau) P_0^{\odot \star} f(\tau) d\tau + j^{-1} \int_\infty^t T^{\odot \star}(t-\tau) P_+^{\odot \star} f(\tau) d\tau \\
    &+j^{-1} \int_{-\infty}^t T^{\odot \star}(t-\tau) P_-^{\odot \star} f(\tau) d\tau, \quad \forall f \in \BC_s^\eta(\mathbb{R},X^{\odot \star}).
\end{split}
\end{align}
The following result justifies that $\mathcal{K}_s^\eta$ is a well-defined bounded linear operator serving precisely as the desired pseudo-inverse. It is crucial to note that the operator norm $\|\mathcal{K}_{s}^{\eta}\| \leq \Omega_\eta$ is uniformly bounded above independent of $s$, a property that will play a key role in the proof of \Cref{thm:LipschitzCM}.

\begin{proposition}
Let $\eta \in (0,\min\{-a,b\})$ and $s \in \mathbb{R}$ be given. The following properties hold.
\begin{enumerate}
    \item $\mathcal{K}_{s}^{\eta}$ is a well-defined bounded linear operator. Moreover, the operator norm $\|\mathcal{K}_{s}^{\eta}\|$ is bounded above independent of $s$.
    \item $\mathcal{K}_{s}^{\eta}f$ is the unique solution of \eqref{eq:inhomogeneous CMT} in $\BC^{\eta}_s(\mathbb{R},X)$ with vanishing $X_0$-component at time $s$.
\end{enumerate}
\end{proposition}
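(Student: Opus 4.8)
The plan is to verify the three integrals in \eqref{eq:Ketas} converge in the appropriate weighted spaces, establish the uniform bound, and then check the two defining properties. First I would treat each of the three terms in the definition of $(\mathcal{K}_s^\eta f)(t)$ separately using the exponential trichotomy from \cref{hyp:1}(4), lifted to the $\odot\star$-level as explained after \cref{hyp:1}. For the stable part, $j^{-1}\int_{-\infty}^t T^{\odot\star}(t-\tau)P_-^{\odot\star}f(\tau)\,d\tau$, I would bound $\|T^{\odot\star}(t-\tau)P_-^{\odot\star}\|$ by $\widetilde{K}_\varepsilon e^{a(t-\tau)}$ for $\tau\le t$ and use $\|f(\tau)\|\le \|f\|_{\eta,s} e^{\eta|\tau-s|}$; splitting the integral at $\tau=s$ and choosing $\varepsilon$ small enough that $a+\eta<0$ and $-a-\eta+$ (room) keeps things summable, one gets a bound of the form $C_\eta\, e^{\eta|t-s|}\|f\|_{\eta,s}$ with $C_\eta$ independent of $s$ (the $s$-dependence enters only through the weight $e^{\eta|\cdot-s|}$, which is exactly the weight of the target space). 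The unstable part is symmetric, integrating from $+\infty$ and using $\|T^{\odot\star}(t-\tau)P_+^{\odot\star}\|\le \widetilde K_\varepsilon e^{b(t-\tau)}$ for $\tau\ge t$; here I would also invoke \cref{hyp:2} (together with $\odot$-reflexivity when available, or the general theory cited from \cite{Clement1989a,Janssens2020}) to guarantee the weak$^\star$ Riemann integral actually takes values in $j(X)$ so that $j^{-1}$ makes sense. The center part, $j^{-1}\int_s^t T^{\odot\star}(t-\tau)P_0^{\odot\star}f(\tau)\,d\tau$, uses the sub-exponential bound $e^{\varepsilon|t-\tau|}$ on $X_0$ and the finite integration range $[s,t]$; combined with the weight this gives a term controlled by $e^{\eta|t-s|}\|f\|_{\eta,s}$ provided $\varepsilon<\eta$, again with constant independent of $s$. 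Summing the three estimates yields $\|\mathcal{K}_s^\eta f\|_{\eta,s}\le \Omega_\eta\|f\|_{\eta,s}$, proving item (1).

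For item (2), I would first check that $u=\mathcal{K}_s^\eta f$ solves \eqref{eq:inhomogeneous CMT}: this is a direct computation applying the semigroup/evolutionary property $T^{\odot\star}(t-\sigma)=T^{\odot\star}(t-\tau)T^{\odot\star}(\tau-\sigma)$ and the splitting of identity $P_-^{\odot\star}+P_0^{\odot\star}+P_+^{\odot\star}=I$, exactly as in \cite[Lemma IX.2.6]{Diekmann1995} or \cite[Lemma 30]{Janssens2020}, the only change being that all occurrences of the fixed base time are replaced by $s$. One evaluates $u(t)-T(t-s)u(s)$, moves $T^{\odot\star}(t-s)$ inside each integral, and the $\pm\infty$ and $s$ limits recombine into $j^{-1}\int_s^t T^{\odot\star}(t-\tau)f(\tau)\,d\tau$. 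Next, the $X_0$-component at $t=s$: applying $P_0^{\odot\star}$ to $u(s)$ kills the $P_\pm^{\odot\star}$ terms (these lie in $X_\pm^{\odot\star}$), and the remaining center term $j^{-1}\int_s^s\cdots=0$, so $P_0 u(s)=0$. For uniqueness, suppose $u_1,u_2\in\BC_s^\eta(\mathbb{R},X)$ both solve \eqref{eq:inhomogeneous CMT} with vanishing $X_0$-component at $s$; then $w:=u_1-u_2$ solves the homogeneous equation \eqref{eq:homogeneous} and lies in $\BC_s^\eta(\mathbb{R},X)$, so by \cref{lemma:X0} we have $w(s)\in X_0$, whence $w(s)=P_0 w(s)=0$, and then $w\equiv 0$ by forward uniqueness of \eqref{eq:homogeneous} together with the group property on $X_0\oplus X_+$ for backward time — or more simply, $w(t)=T(t-s)w(s)=0$ for $t\ge s$ and, projecting onto the group parts $X_0,X_+$ and the semigroup part $X_-$ separately, $w\equiv 0$ on all of $\mathbb{R}$.

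The main obstacle I anticipate is not any single hard estimate but rather bookkeeping the $s$-dependence carefully enough to see that $\Omega_\eta$ is genuinely uniform in $s$: the weight $e^{-\eta|t-s|}$ has a corner at $t=s$, so the integrals must be split at $\tau=s$ (and the estimates for $t\ge s$ versus $t<s$ handled separately), and one must confirm that every resulting geometric-series constant depends only on $a,b,\eta,\varepsilon$ and the trichotomy constant $K_\varepsilon$, never on $s$ itself. A secondary technical point is justifying the weak$^\star$ integral manipulations (interchanging $j^{-1}$, $T^{\odot\star}$, and the integral, and splitting the integration domain) at the $\odot\star$-level; under the $\odot$-reflexivity assumption flagged in \cref{subsec:functionalanalytic} this is standard, and otherwise one cites the existence theory for these weak$^\star$ integrals from the references given there. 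With those two points dispatched, items (1) and (2) follow by adapting the arguments of \cite[Lemmas IX.2.5–IX.2.6]{Diekmann1995} (equivalently \cite[Lemmas 29–30]{Janssens2020}, or \cite[Propositions 2–3]{Article1} for the periodic analogue) essentially verbatim, with the base time replaced by $s$ throughout.
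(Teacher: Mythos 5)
Your proposal is correct and follows essentially the same route as the paper: estimate the three integrals separately via the exponential trichotomy (yielding an $s$-independent bound $\Omega_\eta$, with the $j(X)$-valuedness and continuity handled through \cref{hyp:2}, $\odot$-reflexivity and the cited lemmas), then verify the solution property, the vanishing $X_0$-component at $s$, and uniqueness via \cref{lemma:X0} and the mutual orthogonality of the projections. The only difference is cosmetic: you split the weight at $\tau=s$, while the paper's constants come from absorbing $e^{\eta|\tau-s|}\le e^{\eta|t-s|}e^{\eta|t-\tau|}$ directly; both give a bound independent of $s$.
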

\begin{proof}
Let $\varepsilon \in (0,\eta)$ be given and note that for a given $f \in \BC_s^\eta(\mathbb{R},X^{\odot \star})$, the three integrals appearing in \eqref{eq:Ketas} define functions $I_i : \mathbb{R} \to X^{\odot \star}$ for $i \in \{-,0,+\}$. We have to show that these functions are well-defined, continuous and take values in $j(X)$. Similar to the proofs of \cite[Proposition 3]{Article1} and \cite[Proposition 30]{Janssens2020}, we see that the integrands are weak$^\star$ continuous and obtain from \Cref{hyp:1} that
\begin{equation*}
    \|I_{-}(t)\| \leq  \frac{ K_\varepsilon e^{\eta|t-s|}}{-a-\eta}\|f\|_{\eta,s} , \quad \|I_{0}(t)\| \leq \frac{K_\varepsilon e^{\eta|t-s|}}{\eta - \varepsilon} \|f\|_{\eta,s}, \quad \|I_{+}(t)\| \leq \frac{K_{\varepsilon} e^{\eta |t-s|}}{b-\eta}  \|f\|_{\eta,s}, \quad \forall t \in \mathbb{R},
\end{equation*}
which proves that all integrals are well-defined. An application of \Cref{hyp:1} and \Cref{hyp:2} tells us that 
\begin{equation} \label{eq:Tsunstarj}
    T^{\odot \star}(t-\tau) P_i ^{\odot \star} f(\tau) = j T(t-\tau)j^{-1} P_i^{\odot \star} f(\tau), \quad \forall \tau \geq t, \ i \in \{0,+\},
\end{equation}
which shows that $I_{i}$ takes values in $j(X)$ for $i \in \{0,+\}$. To prove that $I_{-}$ takes values in $j(X)$, we rely on \cite[Lemma III.2.3]{Diekmann1995} and \cite[Lemma 9]{Janssens2020}, see also \cite[Theorem 1]{Article1} for a $T$-periodic version. From these references, we also conclude that $I_{i}$ is continuous for each $i \in \{-,0,+\}$, and thus
\begin{equation*}
    \|\mathcal{K}_s^\eta\| \leq \|j^{-1}\| K_\varepsilon \bigg( \frac{1}{\eta - \varepsilon} + \frac{1}{b-\eta} + \frac{1}{-a-\eta} \bigg) \eqqcolon \Omega_\eta < \infty,
\end{equation*}
which proves the first claim.

Let us now prove the second claim. A straightforward computation shows that $u = \mathcal{K}_s^\eta f$ is a solution of \eqref{eq:inhomogeneous CMT}. The fact that $u$ has vanishing $X_0$-component at time $s$ follows from a direct computation of $P_0 u(s)$ in combination with the mutual orthogonality of the projections $P_i$. To prove that this solution is unique, let $v \in \BC_s^\eta(\mathbb{R},X)$ be another solution of \eqref{eq:inhomogeneous CMT} satisfying $P_0 v(s) = 0$. Then, $w = u-v \in \BC_s^\eta(\mathbb{R},X)$ and $w(t) = T(t-s)w(s)$. \Cref{lemma:X0} tells us that $w \in X_0$ and thus $w(s) = P_0 w(s) = 0$, meaning that $w(t) = T(t-s)w(s) = 0$ for all $t \in \mathbb{R}$, and so $u = v$.
\end{proof}

\subsection{Modification of the nonlinearity} \label{subsec:modification}
To apply in \Cref{thm:LipschitzCM} a contraction argument for proving the existence of a Lipschitz continuous center manifold, it is necessary that the nonlinear operator $R(t,\cdot) : X \to X^{\odot \star}$ for fixed $t \in \mathbb{R}$ is globally Lipschitz. However, since $R$ is only assumed to be $C^k$-smooth, this condition is generally not satisfied. Nevertheless, since we are primarily concerned with the local behaviour of solutions near the origin, we may modify $R(t,\cdot)$ outside a ball of radius $\delta > 0$ to ensure that the operator becomes globally Lipschitz. To this end, we introduce a $C^\infty$-smooth cut-off function $\xi : [0,\infty) \to \mathbb{R}$ by
\begin{equation*}
    \xi(s) \in
    \begin{cases}
    \{ 1 \}, \quad &0 \leq s \leq 1, \\
    [0,1], \quad &1 \leq s \leq 2,\\
    \{ 0 \}, \quad & s \geq 2,
    \end{cases}
\end{equation*}
and define for any $\delta > 0$ the \emph{$\delta$-modification} of $R$ as the operator $R_{\delta} \in C^k(\mathbb{R} \times X, X^{\odot \star})$ with action
\begin{equation*}
   R_{\delta}(t,u) := R(t,u) \xi \bigg( \frac{\|P_0u\|}{\delta} \bigg) \xi \bigg( \frac{\|(P_{-} + P_{+})u\|}{\delta} \bigg), \quad \forall (t,u) \in \mathbb{R} \times X. 
\end{equation*}
Along the same lines of \cite[Proposition 32]{Janssens2020} and \cite[Proposition 4]{Article1}, one proves that for sufficiently small $\delta > 0$, the operator $R_{\delta}(t,\cdot)$ is globally Lipschitz continuous for any $t\in \mathbb{R}$ with Lipschitz constant $L_{\delta} \to 0$ as $\delta \downarrow 0$. Hence, one derives directly that $\|R_\delta(t,u)\| \leq 4 \delta L_\delta$ for all $(t,u) \in \mathbb{R} \times X$, see \cite[Corollary 3.1]{LentjesCMODE} for a proof. We associate with $R_\delta$ the \emph{substitution operator} $\tilde{R}_{\delta} : \BC_s^{\eta}(\mathbb{R},X) \to \BC_s^{\eta}(\mathbb{R},X^{\odot \star})$ by
\begin{equation*}
    \tilde{R}_{\delta}(u) := R_{\delta}(\cdot,u(\cdot)), \quad \forall u \in \BC_s^{\eta}(\mathbb{R},X),
\end{equation*}
and we note that $\tilde{R}_{\delta}$ inherits all Lipschitz properties from $R_{\delta}$, see \cite[Corollary 1]{Article1}. Let us now also introduce for any $\eta \in (0,\min \{-a,b \})$ and $s \in \mathbb{R}$ the bounded linear operator $T_s^\eta : X_0 \to \BC_s^\eta(\mathbb{R},X)$ by 
\begin{equation*}
    (T_s^\eta \varphi)(t) \coloneqq T(t-s)\varphi, \quad \forall \varphi \in X_0,
\end{equation*}
which is well-defined as $T$ extends to a $\mathcal{C}_0$-group on $X_{0}$ due to \Cref{hyp:1}. A straightforward estimate using \Cref{hyp:1} shows that the operator norm $\|T_{s}^{\eta}\| \leq K_\eta$ is uniformly bounded above independent of $s$, a property that will play a key role in the proof of \Cref{thm:LipschitzCM}.

\subsection{Existence of a Lipschitz center manifold}
Motivated by \Cref{lemma:X0}, we will define a parametrized fixed point operator such that its fixed points correspond to exponentially bounded solutions on $\mathbb{R}$ of the modified equation
\begin{equation} \label{eq:modifiedintegraleq}
    u(t) = T(t-s)u(s) + j^{-1} \int_s^t T^{\odot \star}(t-\tau) R_\delta(\tau,u(\tau)) d\tau, \quad- \infty < s \leq t < \infty,
\end{equation}
for some small $\delta > 0$. For a given $\eta \in (0,\min \{-a,b \})$ and $s \in \mathbb{R}$, let us introduce the fixed point operator $\mathcal{G}_s^\eta : \BC_s^\eta(\mathbb{R},X) \times X_0 \to \BC_s^\eta(\mathbb{R},X^{\odot \star})$ by
\begin{equation*}
   \mathcal{G}_s^\eta(u,\varphi) := T_s^\eta\varphi + \mathcal{K}_s^\eta(\tilde{R}_{\delta}(u)), \quad \forall (u,\varphi) \in \BC_s^\eta(\mathbb{R},X) \times X_0,
\end{equation*}
where the second argument must be treated as a parameter.

\begin{theorem} \label{thm:LipschitzCM}
Let $\eta \in (0,\min \{-a,b \})$ and $s\in \mathbb{R}$ be given. If $\delta > 0$ is sufficiently small, then the following two statements hold.
\begin{enumerate}
    \item For every $\varphi \in X_0$ the equation $ u = \mathcal{G}_s^\eta(u,\varphi)$ has a unique solution $u = u_s^\star(\varphi)$.
    \item The map $u_s^\star : X_0 \to \BC_s^\eta(\mathbb{R},X)$ is globally Lipschitz and satisfies $u_s^\star(0) = 0$.
\end{enumerate}
\end{theorem}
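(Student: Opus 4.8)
The plan is to recognize $\mathcal{G}_s^\eta(\cdot,\varphi)$ as a uniform contraction on the Banach space $\BC_s^\eta(\mathbb{R},X)$ and invoke the uniform contraction mapping principle (with parameter $\varphi \in X_0$), so that existence, uniqueness, Lipschitz dependence on $\varphi$, and the identity $u_s^\star(0)=0$ all follow. First I would verify that $\mathcal{G}_s^\eta$ actually maps $\BC_s^\eta(\mathbb{R},X)\times X_0$ into $\BC_s^\eta(\mathbb{R},X)$: this is immediate since $T_s^\eta\varphi \in \BC_s^\eta(\mathbb{R},X)$ by the bound $\|T_s^\eta\|\le K_\eta$, and $\mathcal{K}_s^\eta\circ\tilde R_\delta$ maps into $\BC_s^\eta(\mathbb{R},X)$ because $\tilde R_\delta$ takes values in $\BC_s^\eta(\mathbb{R},X^{\odot\star})$ (it is globally Lipschitz and $\tilde R_\delta(0)=0$ since $R(t,0)=0$) and $\mathcal{K}_s^\eta$ is the bounded operator from the Proposition above with $\|\mathcal{K}_s^\eta\|\le\Omega_\eta$.

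Next I would establish the contraction estimate. For $u,v\in\BC_s^\eta(\mathbb{R},X)$ and fixed $\varphi$,
\begin{equation*}
    \|\mathcal{G}_s^\eta(u,\varphi)-\mathcal{G}_s^\eta(v,\varphi)\|_{\eta,s}
    = \|\mathcal{K}_s^\eta(\tilde R_\delta(u)-\tilde R_\delta(v))\|_{\eta,s}
    \le \Omega_\eta\,\|\tilde R_\delta(u)-\tilde R_\delta(v)\|_{\eta,s}
    \le \Omega_\eta L_\delta\,\|u-v\|_{\eta,s},
\end{equation*}
using $\|\mathcal{K}_s^\eta\|\le\Omega_\eta$ and the fact that $\tilde R_\delta$ inherits the global Lipschitz constant $L_\delta$ of $R_\delta$. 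Since $L_\delta\to 0$ as $\delta\downarrow 0$ (by the modification construction of $R_\delta$, cf.\ \cite[Proposition 32]{Janssens2020}, \cite[Proposition 4]{Article1}), we may fix $\delta>0$ so small that $\kappa\coloneqq\Omega_\eta L_\delta<1$. Crucially, $\Omega_\eta$ and $K_\eta$ are independent of $s$, so $\kappa$ is as well; this uniformity in $s$ is exactly what is needed later in \cref{thm:LCMT}. Then for each $\varphi\in X_0$ the map $u\mapsto\mathcal{G}_s^\eta(u,\varphi)$ is a $\kappa$-contraction, so the Banach fixed point theorem yields a unique fixed point $u=u_s^\star(\varphi)$, proving statement (1).

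For statement (2), the Lipschitz dependence on the parameter is the standard uniform-contraction argument: for $\varphi_1,\varphi_2\in X_0$, writing $u_i=u_s^\star(\varphi_i)$,
\begin{equation*}
    \|u_1-u_2\|_{\eta,s}
    \le \|T_s^\eta(\varphi_1-\varphi_2)\|_{\eta,s} + \|\mathcal{K}_s^\eta(\tilde R_\delta(u_1)-\tilde R_\delta(u_2))\|_{\eta,s}
    \le K_\eta\|\varphi_1-\varphi_2\| + \kappa\|u_1-u_2\|_{\eta,s},
\end{equation*}
hence $\|u_s^\star(\varphi_1)-u_s^\star(\varphi_2)\|_{\eta,s}\le \frac{K_\eta}{1-\kappa}\|\varphi_1-\varphi_2\|$, with the Lipschitz constant again independent of $s$. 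Finally $u_s^\star(0)=0$ because $\mathcal{G}_s^\eta(0,0)=T_s^\eta 0+\mathcal{K}_s^\eta(\tilde R_\delta(0))=0$ (using $R(t,0)=0$, so $R_\delta(t,0)=0$ and $\tilde R_\delta(0)=0$), and $0$ is the unique fixed point for the parameter value $\varphi=0$. I do not anticipate a genuine obstacle here; the only point requiring care is checking that all the relevant constants ($\Omega_\eta$, $K_\eta$, and hence $\kappa$) are genuinely uniform in $s$, which has already been arranged in the construction of $\mathcal{K}_s^\eta$ and $T_s^\eta$, so the result follows essentially verbatim from the autonomous and periodic analogues in \cite{Diekmann1995,Janssens2020,Article1}.
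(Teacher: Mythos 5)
Your proposal is correct and follows essentially the same route as the paper: the same contraction estimate $\Omega_\eta L_\delta$ (the paper fixes $\Omega_\eta L_\delta \le \tfrac12$, giving Lipschitz constant $2K_\eta$ instead of your $K_\eta/(1-\kappa)$), the Banach fixed point theorem for each fixed $\varphi$, the standard parameter-dependence estimate, and $u_s^\star(0)=0$ from $\mathcal{G}_s^\eta(0,0)=0$. Your additional remarks on well-definedness and the $s$-uniformity of the constants are consistent with the paper's setup and require no changes.
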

\begin{proof}
Take $(u,\varphi),(v,\psi) \in \BC_s^\eta (\mathbb{R},X) \times X_0$ and note that
\begin{equation*}
    \|\mathcal{G}_s^\eta(u,\varphi) - \mathcal{G}_s^\eta(v,\psi)\|_{\eta,s} \leq K_\eta\|\varphi- \psi\| + \Omega_\eta L_{\delta} \|u-v\|_{\eta,s}.
\end{equation*}
By the results from \Cref{subsec:modification}, we know that there exists a $\delta > 0$ small enough such that $\Omega_\eta L_{\delta} \leq \frac{1}{2}$. 

To prove the first assertion, note that
\begin{equation*}
    \|\mathcal{G}_s^\eta(u,\varphi) - \mathcal{G}_s^\eta(v,\varphi)\|_{\eta,s} \leq \frac{1}{2} \|u-v\|_{\eta,s},
\end{equation*}
and so $\mathcal{G}_s^\eta(\cdot,\varphi)$ is a contraction on the Banach space $\BC_s^\eta(\mathbb{R},X)$ equipped with the $\|\cdot\|_{\eta,s}$-norm. The contraction mapping principle states that $\mathcal{G}_s^\eta(\cdot,\varphi)$ has a unique fixed point $u_s^\star(\varphi)$.

To prove the second assertion, consider the unique fixed points $u_s^\star(\varphi)$ and $u_s^\star(\psi)$ of $\mathcal{G}_s^\eta(u,\varphi)$ and $\mathcal{G}_s^\eta(u,\psi)$, respectively. Then,
\begin{equation*}
    \|u_s^\star(\varphi) - u_s^\star(\psi)\|_{\eta,s} 
    \leq K_\eta \|\varphi - \psi\| + \frac{1}{2}\|u_s^\star(\varphi) - u_s^\star(\psi)\|_{\eta,s},
\end{equation*}
which shows that $\|u_s^\star(\varphi) - u_s^\star(\psi)\|_{\eta,s} \leq 2 K_\eta \|\varphi - \psi\|$, and so $u^\star_s$ is globally Lipschitz. Since $u_s^\star(0) = \mathcal{G}_s^\eta(u_s^\star(0),0) = 0$, the second assertion follows.
\end{proof}
Let us now define the map $\mathcal{C} : \mathbb{R} \times X_0 \to X$ by $\mathcal{C}(s,\varphi) \coloneqq u_s^\star(\varphi)(s)$ and introduce the sets
\begin{equation*}
    \mathcal{W}^c \coloneqq \{(s,\varphi) \in \mathbb{R} \times X : \varphi \in \mathcal{W}^c(s) \}, \quad \mathcal{W}^c(s) \coloneqq \{ \mathcal{C}(s,\varphi) \in X : \varphi \in X_0\}, \quad \forall s \in \mathbb{R}.
\end{equation*}
We call $\mathcal{W}^c$ the \emph{global center manifold} of \eqref{eq:modifiedintegraleq} whose \emph{$s$-fibers} are given by $\mathcal{W}^c(s)$. From \Cref{thm:LipschitzCM}, we observe directly that $\mathcal{C}(s,\cdot)$ is globally Lipschitz, where the Lipschitz constant might depend on $s$, meaning that $\mathcal{C}$ is only fiberwise Lipschitz. However, along the same lines of \cite[Corollary 3]{Article1} one can prove that this Lipschitz constant can be chosen independently of $s$, meaning that $\mathcal{W}^c$ is in fact a global Lipschitz center manifold of \eqref{eq:modifiedintegraleq}.

Let $B_\delta(X)$ be the open ball centered around the origin in $X$ of radius $\delta > 0$. Then, \Cref{subsec:modification} shows that $R_\delta = R$ on $\mathbb{R} \times B_\delta(X)$, meaning that the modified integral equation \eqref{eq:modifiedintegraleq} is equivalent to the original integral equation \eqref{eq:inhomogenous} on $B_\delta(X)$. Then, we call
\begin{equation*}
    \mathcal{W}_{\loc}^c := \{(s,\varphi) \in \mathbb{R} \times X : \varphi \in \mathcal{W}^c(s) \cap B_\delta(X) \}
\end{equation*}
the \emph{local center manifold} of \eqref{eq:inhomogenous} for some sufficiently small $\delta > 0$. In the framework of nonautonomous dynamical systems, the sets $\mathcal{W}^c$ and $\mathcal{W}_{\loc}^c$ are also called \emph{integral manifolds}.

\subsection{Properties of the center manifold} \label{sec:properties}

Before we prove the main theorem of this section (\Cref{thm:LCMT}), let us first show that $\mathcal{W}^c(s)$ is indeed a nonlinear generalization of the center subspace $X_0$, which was characterized in \Cref{lemma:X0}. The proof is a straightforward combination of \cite[Proposition 36]{Janssens2020} and \cite[Proposition 5]{Article1} (or \cite[Proposition 6]{LentjesCMODE}), and therefore omitted.
\begin{proposition} \label{prop:Wcs}
Let $\eta \in (0,\min\{-a,b\})$ and $s \in \mathbb{R}$ be given. Then
\begin{equation*}
    \mathcal{W}^c(s) = \{ \varphi \in X : \mbox{there exists a solution of \eqref{eq:modifiedintegraleq} on $\mathbb{R}$ through $\varphi$ at time $s$ belonging to } \BC_{s}^{\eta}(\mathbb{R},X) \}.
\end{equation*}
\end{proposition}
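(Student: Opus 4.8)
The plan is to prove the two inclusions of the set equality separately, exploiting the fixed point characterization from \cref{thm:LipschitzCM} together with the pseudo-inverse property of $\mathcal{K}_s^\eta$ and the characterization of $X_0$ in \cref{lemma:X0}.

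First I would prove the forward inclusion: if $\varphi \in \mathcal{W}^c(s)$, then $\varphi = \mathcal{C}(s,\psi) = u_s^\star(\psi)(s)$ for some $\psi \in X_0$, where $u_s^\star(\psi) \in \BC_s^\eta(\mathbb{R},X)$ is the unique fixed point of $\mathcal{G}_s^\eta(\cdot,\psi)$. Unwinding the definition of $\mathcal{G}_s^\eta$, the fixed point equation $u_s^\star(\psi) = T_s^\eta \psi + \mathcal{K}_s^\eta(\tilde{R}_\delta(u_s^\star(\psi)))$ together with the fact (established in the proposition preceding \cref{subsec:modification}) that $\mathcal{K}_s^\eta f$ solves the inhomogeneous equation \eqref{eq:inhomogeneous CMT} with forcing $f$, and that $(T_s^\eta \psi)(t) = T(t-s)\psi$ solves the homogeneous equation, shows that $u = u_s^\star(\psi)$ satisfies \eqref{eq:modifiedintegraleq} on $\mathbb{R}$. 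Evaluating at $t = s$ gives $u(s) = \varphi$, so $u$ is the desired solution in $\BC_s^\eta(\mathbb{R},X)$ through $\varphi$ at time $s$.

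For the reverse inclusion, suppose $u \in \BC_s^\eta(\mathbb{R},X)$ solves \eqref{eq:modifiedintegraleq} on $\mathbb{R}$ with $u(s) = \varphi$. The idea is to show that $u$ must coincide with the fixed point $u_s^\star(\psi)$ for the specific choice $\psi := P_0 u(s) \in X_0$, which then forces $\varphi = u(s) = u_s^\star(\psi)(s) = \mathcal{C}(s,\psi) \in \mathcal{W}^c(s)$. To do this, I would decompose $u$ using the spectral projections and compare $u$ with $\tilde{u} := T_s^\eta \psi + \mathcal{K}_s^\eta(\tilde{R}_\delta(u))$, which has the same $X_0$-component at time $s$ as $u$ (since $\mathcal{K}_s^\eta(\tilde{R}_\delta(u))$ has vanishing $X_0$-component at $s$ by part 2 of the proposition, and $(T_s^\eta \psi)$ contributes $\psi$). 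Both $u$ and $\tilde u$ solve \eqref{eq:modifiedintegraleq} with the same right-hand side forcing $\tilde R_\delta(u)$ and the same $X_0$-data at time $s$; hence $w := u - \tilde u$ satisfies the homogeneous equation $w(t) = T(t-s)w(s)$ with $P_0 w(s) = 0$ and $w \in \BC_s^\eta(\mathbb{R},X)$, so by \cref{lemma:X0} we get $w \in X_0$ and therefore $w(s) = P_0 w(s) = 0$, giving $w \equiv 0$. Thus $u = \tilde u = T_s^\eta \psi + \mathcal{K}_s^\eta(\tilde R_\delta(u)) = \mathcal{G}_s^\eta(u,\psi)$, so $u$ is a fixed point of $\mathcal{G}_s^\eta(\cdot,\psi)$, and by uniqueness $u = u_s^\star(\psi)$.

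The main obstacle is bookkeeping the projections correctly: one has to verify carefully that $\mathcal{K}_s^\eta(\tilde R_\delta(u))$ genuinely has vanishing $X_0$-component precisely at time $t = s$ (not at every time), and that the decomposition argument comparing $u$ with $\tilde u$ produces a genuine homogeneous solution so that \cref{lemma:X0} applies — this requires that both satisfy the same translation-invariant integral equation \eqref{eq:modifiedintegraleq}, which is where the substitution operator $\tilde R_\delta$ being evaluated at the \emph{same} argument $u$ on both sides is essential. Since all of this mirrors the cited arguments in \cite[Proposition 36]{Janssens2020} and \cite[Proposition 5]{Article1}, the proof can reasonably be omitted, as the authors do, but the above is the scheme one would follow to reconstruct it.
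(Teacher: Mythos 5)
Your proposal is correct and follows essentially the same route as the (omitted) proof the paper points to in \cite[Proposition 36]{Janssens2020} and \cite[Proposition 5]{Article1}: both inclusions are obtained from the fixed-point characterization of $u_s^\star$, the pseudo-inverse property of $\mathcal{K}_s^\eta$ (including the vanishing $X_0$-component at $t=s$), and \cref{lemma:X0} applied to the difference of two solutions with the same forcing $\tilde{R}_\delta(u)$. Nothing further is needed.
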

The local invariance property of $\mathcal{W}_{\loc}^c$ is proven in the following result. Again, since this proof of this result is just a straightforward combination of \cite[Corollary 38]{Janssens2020} (or \cite[Theorem IX.5.3]{Diekmann1995}) and \cite[Theorem 2]{Article1} (or \cite[Proposition 6]{LentjesCMODE}), it is simply omitted.

\begin{proposition} \label{prop:locinv}
The local center manifold $\mathcal{W}_{\loc}^c$ has the following properties.
\begin{enumerate}
    \item $\mathcal{W}_{\loc}^c$ is locally positively invariant: if $(s,\varphi) \in \mathcal{W}_{\loc}^c$ and $s < t_\varphi \leq \infty$ are such that $S(t,s,\varphi) \in B_\delta(X)$ for all $t \in [s,t_\varphi),$ then $(t,S(t,s,\varphi)) \in \mathcal{W}_{\loc}^c$.
    \item $\mathcal{W}_{\loc}^c$ contains every solution of \eqref{eq:inhomogenous} that exists on $\mathbb{R}$ and remains sufficiently small for all positive time and negative time: if $u : \mathbb{R} \to B_\delta(X)$ is a solution of \eqref{eq:inhomogenous}, then $(t,u(t)) \in \mathcal{W}_{\loc}^c$ for all $t\in \mathbb{R}$.
    \item If $(s,\varphi) \in \mathcal{W}_{\loc}^c$, then $S(t,s,\varphi) = u_t^\star(P_0S(t,s,\varphi))(t) = \mathcal{C}(t,P_0S(t,s,\varphi))$ for all $t \in [s,t_\varphi)$.
    \item $\mathbb{R} \times \{0\} \subseteq \mathcal{W}_{\loc}^c$ and $\mathcal{C}(t,0) = 0$ for all $t\in \mathbb{R}$.
\end{enumerate}
\end{proposition}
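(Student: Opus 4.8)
The plan is to obtain all four assertions from the fibered characterization in \cref{prop:Wcs}, together with the defining fixed-point identity $u_s^\star(\psi)=T_s^\eta\psi+\mathcal{K}_s^\eta(\tilde R_\delta(u_s^\star(\psi)))$ and the fact, noted just before \cref{thm:LipschitzCM}, that fixed points of $\mathcal{G}_s^\eta(\cdot,\psi)$ are exactly the $\BC_s^\eta$-bounded solutions of \eqref{eq:modifiedintegraleq} on $\mathbb{R}$. Two elementary observations carry the argument. First, since $\big||\tau-s|-|\tau-t|\big|\le|t-s|$ for all $\tau\in\mathbb{R}$, the weighted norms $\|\cdot\|_{\eta,s}$ and $\|\cdot\|_{\eta,t}$ are equivalent and $\BC_s^\eta(\mathbb{R},X)=\BC_t^\eta(\mathbb{R},X)$ as sets; hence a solution of \eqref{eq:modifiedintegraleq} on $\mathbb{R}$ that is $\BC_s^\eta$-bounded is also $\BC_t^\eta$-bounded. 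Second, evaluating the fixed-point identity at time $s$, applying $P_0$, and using that $\mathcal{K}_s^\eta f$ has vanishing $X_0$-component at time $s$, one gets the section identity $P_0\,\mathcal{C}(s,\psi)=\psi$ for every $\psi\in X_0$; in particular any $\varphi\in\mathcal{W}^c(s)$ satisfies $\varphi=\mathcal{C}(s,P_0\varphi)$, and a solution through $\varphi$ at time $s$ as in \cref{prop:Wcs} is realized by $w:=u_s^\star(P_0\varphi)$.

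Given these, assertions (4) and (2) are short. For (4): $R(t,0)=0$ forces $R_\delta(t,0)=0$, so the zero function solves \eqref{eq:modifiedintegraleq} on $\mathbb{R}$ and lies in every $\BC_s^\eta$; hence $0\in\mathcal{W}^c(s)$ by \cref{prop:Wcs}, giving $\mathbb{R}\times\{0\}\subseteq\mathcal{W}_{\loc}^c$, while $\mathcal{C}(t,0)=u_t^\star(0)(t)=0$ by \cref{thm:LipschitzCM}. For (2): if $u:\mathbb{R}\to B_\delta(X)$ solves \eqref{eq:inhomogenous}, then, as $R_\delta=R$ on $\mathbb{R}\times B_\delta(X)$, it also solves \eqref{eq:modifiedintegraleq} on $\mathbb{R}$, and $\|u(\tau)\|\le\delta$ makes it $\BC_s^\eta$-bounded for every $s$; \cref{prop:Wcs} then gives $u(t)\in\mathcal{W}^c(t)$, which together with $u(t)\in B_\delta(X)$ upgrades to $(t,u(t))\in\mathcal{W}_{\loc}^c$.

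For the local positive invariance (1) I would argue as follows. Let $(s,\varphi)\in\mathcal{W}_{\loc}^c$ and let $t_\varphi$ be such that $S(\tau,s,\varphi)\in B_\delta(X)$ for all $\tau\in[s,t_\varphi)$. Set $w:=u_s^\star(P_0\varphi)$; by the second preliminary observation $w$ is a $\BC_s^\eta$-bounded solution of \eqref{eq:modifiedintegraleq} on $\mathbb{R}$ with $w(s)=\varphi$. On $[s,t_\varphi)$ the orbit $\tau\mapsto S(\tau,s,\varphi)$ stays in $B_\delta(X)$, hence also solves \eqref{eq:modifiedintegraleq} there and coincides with $w$ at $\tau=s$; by uniqueness of solutions of \eqref{eq:inhomogenous} forward in time (equivalently of \eqref{eq:modifiedintegraleq} inside $B_\delta(X)$) this forces $w(\tau)=S(\tau,s,\varphi)$ on $[s,t_\varphi)$. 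Now fix $t\in[s,t_\varphi)$: by the first preliminary observation $w\in\BC_t^\eta(\mathbb{R},X)$, so $w$ is a $\BC_t^\eta$-bounded solution of \eqref{eq:modifiedintegraleq} on $\mathbb{R}$ through $w(t)=S(t,s,\varphi)$ at time $t$, and \cref{prop:Wcs} (with starting time $t$) yields $S(t,s,\varphi)\in\mathcal{W}^c(t)$; combined with $S(t,s,\varphi)\in B_\delta(X)$ this is precisely $(t,S(t,s,\varphi))\in\mathcal{W}_{\loc}^c$. I expect this to be the main obstacle: the bookkeeping forces one to move back and forth between the globally Lipschitz modified equation \eqref{eq:modifiedintegraleq} — on which \cref{prop:Wcs} and the whole $\BC^\eta$-machinery live — and the semiflow $S$ of the original equation \eqref{eq:inhomogenous}, which is only defined while the orbit remains in $B_\delta(X)$, and to relabel the starting time inside the weighted spaces without changing the function.

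Finally, (3) follows from (1) and the section identity: for $(s,\varphi)\in\mathcal{W}_{\loc}^c$ and $t\in[s,t_\varphi)$ with the orbit in $B_\delta(X)$, assertion (1) gives $S(t,s,\varphi)\in\mathcal{W}^c(t)$, so $S(t,s,\varphi)=\mathcal{C}(t,\psi)$ for some $\psi\in X_0$; applying $P_0$ and using $P_0\mathcal{C}(t,\psi)=\psi$ identifies $\psi=P_0S(t,s,\varphi)$, whence $S(t,s,\varphi)=\mathcal{C}(t,P_0S(t,s,\varphi))=u_t^\star(P_0S(t,s,\varphi))(t)$. As the paper indicates, every step here mirrors the corresponding arguments in \cite{Diekmann1995,Janssens2020,Article1}, the only new ingredient being the $s$-uniformity already established in the preceding subsections.
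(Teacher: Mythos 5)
Your proposal is correct and follows essentially the same route as the paper, which omits the details and delegates them to the cited references: the characterization of the fibers in \cref{prop:Wcs}, the vanishing $X_0$-component of $\mathcal{K}_s^\eta$ (giving $P_0\mathcal{C}(s,\psi)=\psi$), the identification $R_\delta=R$ on $\mathbb{R}\times B_\delta(X)$ together with forward uniqueness, and the harmless change of starting time using the equivalence of the $\|\cdot\|_{\eta,s}$- and $\|\cdot\|_{\eta,t}$-norms are exactly the ingredients of the standard argument in \cite[Corollary 38]{Janssens2020} and \cite[Theorem 2]{Article1}. Your reading of item (3) under the hypotheses of item (1) (orbit remaining in $B_\delta(X)$) is the intended one, consistent with those references.
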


The next objective is to demonstrate that the mapping $\mathcal{C}$ inherits the same order of smoothness as the time-dependent nonlinear perturbation $R$. Establishing higher regularity of center manifolds is a nontrivial task. A classical approach to increase smoothness is through the theory of contractions on scales of Banach spaces \cite{Vanderbauwhede1987}. By employing this method, one can follow arguments similar to those in \cite{Diekmann1991CM,Diekmann1995,Article1} to show that the map $\mathcal{C}$ is of class $C^k$. We also refer to \cite{LentjesCMODE,Hupkes2006,Hupkes2008,Church2018,Church2021} for further applications of this technique to various types of evolution equations. Consequently, we conclude that $\mathcal{W}^c$ and $\mathcal{W}^c_{\loc}$ are $C^k$-smooth manifolds in $\mathbb{R} \times X$. 

The additional regularity of the center manifolds enables a precise analysis of their tangency properties. Along the same lines of \cite[Corollary 5]{Article1} or \cite[Proposition 8]{LentjesCMODE}, we derive that $\mathbb{R} \times X_0$ is the tangent bundle restricted to the zero section of $\mathcal{W}^c$ and $\mathcal{W}^c_{\loc}$: $D_2 \mathcal{C}(s,0)\varphi = \varphi$ for all $(s,\varphi) \in \mathbb{R} \times X_0$.

Since $R$ is $T$-periodic in the first component, one expects naturally that (the second component of) $\mathcal{W}^c$ and $\mathcal{W}^c_{\loc}$ are itself $T$-periodic manifolds in $\mathbb{R} \times X$, meaning that there exists a $\delta > 0$ such that $\mathcal{C}(s+T,\varphi) = \mathcal{C}(s,\varphi)$ for all $(s,\varphi) \in \mathbb{R} \times B_\delta(X_0)$. This can be proven along the same lines of \cite[Theorem 6]{Article1} or \cite[Proposition 9]{LentjesCMODE}. Combining all the results from above, we arrive at the following result.

\begin{theorem}[{Local center manifold}] \label{thm:LCMT}
Let $T_0$ be a $\mathcal{C}_0$-semigroup on a real $\odot$-reflexive Banach space $X$ and let $T$ be the $\mathcal{C}_0$-semigroup defined by \eqref{eq:LAIE} satisfying \emph{\Cref{hyp:1}} and \emph{\Cref{hyp:2}}, where $B : X \to X^{\odot \star}$ is a bounded linear perturbation. Suppose in addition that $X_0$ is $n_0$-dimensional and that the nonlinear perturbation $R : \mathbb{R} \times X \to X^{\odot \star}$ satisfying \eqref{eq:Rassumptions} is $C^k$-smooth for some $k \geq 1$ and $T$-periodic in the first component. Then there exists a $\delta > 0$ and a $C^k$-smooth map $\mathcal{C} : \mathbb{R} \times X_0 \to X$ such that the manifold $\mathcal{W}_{\loc}^c := \{(s,\mathcal{C}(s,\varphi)) \in \mathbb{R} \times X : \mathcal{C}(s,\varphi) \in B_\delta(X) \text{ and } \varphi \in X_0 \}$ is $T$-periodic, $C^k$-smooth, $(n_0+1)$-dimensional, locally positively invariant for the time-dependent semiflow $S$ generated by \eqref{eq:inhomogenous} whose tangent bundle restricted to the zero section is $\mathbb{R} \times X_0$.
\end{theorem}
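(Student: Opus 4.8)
The statement is the synthesis of the preceding subsections, so the plan is essentially one of assembly, with the $C^k$-smoothness, the $T$-periodicity, and the uniformity in the starting time $s$ being the three points requiring genuine care. First I would fix $\eta$ small (in particular with $k\eta < \min\{-a,b\}$, anticipating the smoothness step) and pass to the $\delta$-modified equation \eqref{eq:modifiedintegraleq}, choosing $\delta>0$ small enough that the Lipschitz constant $L_\delta$ of $R_\delta(t,\cdot)$ satisfies $\Omega_\eta L_\delta \leq \tfrac12$; this is possible by \cref{subsec:modification}, and crucially $\Omega_\eta$ and $K_\eta$ are independent of $s$. \cref{thm:LipschitzCM} then produces, for each $s$, the globally Lipschitz fixed point map $u_s^\star : X_0 \to \BC_s^\eta(\mathbb{R},X)$ with $u_s^\star(0)=0$, and I set $\mathcal{C}(s,\varphi) := u_s^\star(\varphi)(s)$, defining $\mathcal{W}^c$ and $\mathcal{W}_{\loc}^c$ exactly as in the text.

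Local positive invariance and the characterization of the manifold are then quoted: since $R_\delta = R$ on $B_\delta(X)$, \cref{prop:Wcs} identifies $\mathcal{W}^c(s)$ with the set of initial data through which \eqref{eq:modifiedintegraleq} admits a globally defined $\BC_s^\eta$-solution, and \cref{prop:locinv}(1)--(2) gives that $\mathcal{W}_{\loc}^c$ is locally positively invariant for the semiflow $S$ generated by \eqref{eq:inhomogenous} and captures every globally small solution. The dimension count is immediate: each fiber $\mathcal{W}^c(s)$ is the graph over the $n_0$-dimensional space $X_0$ of the map $\varphi \mapsto \mathcal{C}(s,\varphi) - \varphi$, which lands in $X_-\oplus X_+$, so $\mathcal{W}_{\loc}^c \subseteq \mathbb{R}\times X$ has dimension $n_0+1$, the extra direction being time.

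The \emph{main obstacle} is upgrading $\mathcal{C}$ from Lipschitz to $C^k$. The operator $\mathcal{G}_s^\eta(\cdot,\varphi)$ is a contraction on $\BC_s^\eta(\mathbb{R},X)$, but the substitution operator $\tilde R_\delta$ is not differentiable between a single fixed pair of such weighted spaces — the classical loss-of-smoothness phenomenon. I would therefore invoke the theory of contractions on scales of Banach spaces \cite{Vanderbauwhede1987}, working simultaneously with the family $\{\BC_s^{j\eta}\}_{1\leq j\leq k}$, checking that $\mathcal{K}_s^{j\eta}$ and $\tilde R_\delta$ interact correctly across the scale, and concluding that the fixed point depends $C^k$-smoothly on the parameter $\varphi$; this is the argument of \cite{Diekmann1991CM,Diekmann1995,Article1,LentjesCMODE}, and the only additional bookkeeping is to verify that all the constants involved were arranged $s$-independently, so the resulting smoothness estimates are uniform in $s$. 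Evaluating at $t=s$ transfers $C^k$-regularity to $\mathcal{C}$, hence to $\mathcal{W}^c$ and $\mathcal{W}_{\loc}^c$.

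Finally, tangency and periodicity follow by differentiating and translating the fixed point relation. Differentiating $u = \mathcal{G}_s^\eta(u,\varphi)$ at $\varphi=0$ and using \eqref{eq:Rassumptions} (so $D_2\tilde R_\delta(0)=0$) gives $D_2 u_s^\star(0) = T_s^\eta$, whence $D_2\mathcal{C}(s,0)\varphi = (T_s^\eta\varphi)(s) = \varphi$, i.e. the tangent bundle of $\mathcal{W}_{\loc}^c$ restricted to the zero section is $\mathbb{R}\times X_0$. For $T$-periodicity, the $T$-periodicity of $R$ in its first argument makes $R_\delta(\tau+T,\cdot) = R_\delta(\tau,\cdot)$, so $t\mapsto u_{s+T}^\star(\varphi)(t+T)$ solves the same fixed point equation at time $s$; uniqueness then yields $u_{s+T}^\star(\varphi)(\cdot+T) = u_s^\star(\varphi)(\cdot)$ and in particular $\mathcal{C}(s+T,\varphi) = \mathcal{C}(s,\varphi)$ on $B_\delta(X_0)$, as in \cite[Theorem 6]{Article1}. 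Collecting these facts gives the theorem.
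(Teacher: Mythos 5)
Your proposal is correct and follows essentially the same route as the paper: it assembles \cref{thm:LipschitzCM}, \cref{prop:Wcs} and \cref{prop:locinv}, upgrades regularity via contractions on scales of Banach spaces as in \cite{Vanderbauwhede1987,Diekmann1991CM,Article1,LentjesCMODE}, and obtains tangency and $T$-periodicity from the fixed-point relation exactly as the paper does by citing the corresponding results in \cite{Article1,LentjesCMODE}. Your explicit translation argument for periodicity and the uniform-in-$s$ bookkeeping (including $k\eta<\min\{-a,b\}$) are just slightly more detailed versions of what the paper delegates to references.
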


\begin{remark} \label{remark:LCMTextension}
The assumptions in \Cref{thm:LCMT} could be slightly relaxed to prove a similar result. However, to maintain simplicity and avoid introducing additional technical complications above, we chose not to pursue this. For instance, it would suffice to assume that the nonlinear perturbation $R$ is only $C^k$-smooth in an open neighbourhood $\mathbb{R} \times U$ of $\mathbb{R} \times \{0\}$. Moreover, if the assumption of $T$-periodicity in the first component of $R$ is dropped, but one instead assumes that $\sup_{\varphi \in B_\varepsilon(X)}\|R(t,\varphi)\| = \mathcal{O}(\varepsilon^2)$ and $\sup_{\varphi \in B_\varepsilon(X)}\|D_2 R(t,\varphi)\| = \mathcal{O}(\varepsilon)$ for all $t \in \mathbb{R}$ and any sufficiently small $\varepsilon > 0$, then the same result remains valid, but the $T$-periodicity of $\mathcal{W}^c$ and $\mathcal{W}^c_{\loc}$ is, in general, no longer preserved. \hfill $\lozenge$
\end{remark}

Center manifolds exhibit dynamical properties that are crucial for applications. A key feature for modelling and simulation (\Cref{sec:Examples}) is the \emph{local attractivity} of the center manifold: if $\sigma_{+} = \emptyset$, then any solution that remains sufficiently close to the center manifold forward in time is exponentially attracted to a specific solution on this manifold. To be precise, if $u$ is a solution of \eqref{eq:inhomogenous} such that $(t,u(t))$ remains in a sufficiently small neighbourhood of $\mathcal{W}_{\loc}^c$, then there exists a solution $v$ of \eqref{eq:inhomogenous} and constants $K, c > 0$ such that $(t,v(t)) \in \mathcal{W}_{\loc}^c$ and $\|u(t) - v(t)\| \leq Ke^{-c(t-s)}$ for all $t \geq s$. This result can be established by following the same contraction argument as used in the autonomous finite-dimensional ODE framework \cite[Theorem 2]{Carr1981}, and is therefore omitted. A crucial point in this non-autonomous setting is that the Lipschitz constants, arising in the center manifold construction, are $s$-independent due to the $T$-periodicity of the nonlinearity $R$ in the first component. For a comprehensive argument in the context of impulsive delay differential equations, see the proof of \cite[Theorem I.5.5.1]{Church2021}.

To complement the attractivity property, the explicit derivation of the reduced dynamics is based on the \emph{approximation theorem} of the center manifold, which is employed in the proof of \Cref{thm:periodicnormalform}. In the present nonlinearly periodically forced setting, the local center manifold $\mathcal{W}_{\loc}^c$ is described by the map $\mathcal{C}$, which is $T$-periodic in the first component on $\mathbb{R} \times B_\delta(X_0)$ for some sufficiently small $\delta > 0$. If one constructs a sufficiently smooth candidate function $C : \mathbb{R} \times B_\delta(X_0) \to X$ of $\mathcal{C}$, which is also $T$-periodic in the first component and satisfies the governing center manifold invariance equation up to a residual error of $\mathcal{O}(\|\varphi\|^p)$, then the exact manifold parametrization satisfies $\|\mathcal{C}(s,\varphi) - C(s,\varphi)\| = \mathcal{O}(\|\varphi\|^p)$ uniformly in $s$. The formal proof of this asymptotic accuracy can be obtained using the contraction argument as used in \cite[Theorem 3]{Carr1981} for autonomous finite-dimensional ODEs. Because the necessary bounds rely fundamentally on the contraction mapping principle discussed above and the bounded $s$-independent Lipschitz constants arising from the $T$-periodicity, the detailed argument is similarly omitted.

\subsection{The special case of classical DDEs} \label{sec:DDEs}
Our next aim is to show that the nonlinearly periodically forced system \eqref{eq:DDE} admits a center manifold around the equilibrium $\overline{x}$. As already announced in \Cref{sec:introduction}, we will first prove by an application of \Cref{thm:LCMT} that the translated equation \eqref{eq:introDDE3} admits a center manifold around the origin. The starting point to study classical DDEs on $X = C([-h,0],\mathbb{R}^n)$ is the shift semigroup $T_0$ defined by
\begin{equation*}
    (T_0(t)\varphi)(\theta) := 
    \begin{cases}
    \varphi(t+\theta), \quad &-h \leq t+ \theta \leq 0,\\
    \varphi(0), \quad &t+\theta \geq 0,
    \end{cases}
    \quad \forall \varphi \in X, \ t \geq 0, \ \theta \in [-h,0],
\end{equation*}
which generates the solution corresponding to the trivial DDE $\dot{x}(t) = 0$ for $t \geq 0$ with initial condition $x_0 = \varphi$, through the relation $x_t = T_0(t)\varphi$ for all $t \geq 0$. A representation theorem by Riesz \cite{Riesz1914} enables us to identify $X^\star = C([-h,0],\mathbb{R}^n)^\star$ with the Banach space $\NBV([0,h],\mathbb{R}^{n\star})$ consisting of functions $\zeta : [0,h] \to \mathbb{R}^{n \star}$ that are normalized by $\zeta(0) = 0$, are continuous from the right on $(0,h)$ and have bounded variation. The remaining spaces can be characterized by
\begin{equation} \label{eq:spacesDDEs}
    X^{\odot} \cong \mathbb{R}^{n \star} \times L^1([0,h],\mathbb{R}^{n\star}), \quad X^{\odot \star} \cong \mathbb{R}^n \times L^\infty([-h,0],\mathbb{R}^n), \quad X^{\odot \odot} \cong \mathbb{R}^n \times C([-h,0],\mathbb{R}^n),
\end{equation}
where $\cong$ denotes an explicit isometric isomorphism. The natural dual pairing $\langle \cdot,\cdot \rangle$ between $\varphi^\odot = (c,g) \in X^{\odot}$ and $\varphi \in X$, and $\varphi^{\odot \star} = (a,\psi) \in X^{\odot \star}$ and $\varphi^\odot$, is given by
\begin{equation} \label{eq:naturalpairings}
    \langle \varphi^\odot,\varphi\rangle = c \varphi(0) + \int_0^h g(\theta) \varphi(-\theta) d\theta, \qquad \langle \varphi^{\odot \star},\varphi^\odot \rangle = ca + \int_0^h g(\theta) \psi(-\theta) d\theta.
\end{equation}
The canonical embedding $j$ from \eqref{eq:j} has action $j\varphi = (\varphi(0),\varphi)$ for all $\varphi \in X$, mapping $X$ onto $X^{\odot \odot}$, meaning that $X$ is $\odot$-reflexive with respect to the shift semigroup $T_0$. We refer to \cite[Section 2]{Diekmann1995} for the proofs of the above statements.

In this setting, the bounded linear perturbation $B : X \to X^{\odot \star}$ takes the form $B\varphi = [L \varphi]r^{\odot \star} $ while the nonlinear $C^k$-smooth perturbation $R : \mathbb{R} \times X \to X^{\odot \star}$, which is $T$-periodic in the first component, is given by $R(t,\varphi) = G(t,\varphi)r^{\odot \star}$ for all $(t,\varphi) \in \mathbb{R} \times X$. Here, we used the conventional shorthand notation $wr^{\odot \star} \coloneqq \sum_{i=1}^n w_i r_i^{\odot \star}$ for all $w = (w_1,\dots,w_n) \in \mathbb{R}^n$, where $r_i^{\odot \star} \coloneqq (e_i,0) \in X^{\odot \star}$ and $e_i$ denotes the $i$th basis vector of $\mathbb{R}^n$ for all $i = 1,\dots,n$. The $\mathcal{C}_0$-semigroup $T$ generates solutions of the linear DDE \eqref{eq:introDDE3} with $G = 0$ via the relation $y_t = T(t-s)\varphi$ for $t \geq s$. Similarly, combining the proofs of \cite[Proposition VII.6]{Diekmann1995} and \cite[Theorem 7]{Article1}, we find that the time-dependent semiflow $S$ from \eqref{eq:Sprocess} generates solutions of \eqref{eq:introDDE3} via the relation $y_t = S(t,s,\varphi)$ for $t \in [s,t_\varphi)$.

It remains to verify \Cref{hyp:1} and \Cref{hyp:2} in order to apply \Cref{thm:LCMT}, for which we rely on \Cref{thm:spectralgab}. In the setting of classical DDEs, it is known that $T$ is eventually compact ($T_0$ is eventually compact and $B$ is bounded) and thus eventually norm continuous. Since the associated generator $A$ given by
\begin{equation*}
    \mathcal{D}(A) = \{ \varphi \in X : \varphi' \in X \mbox{ and } \varphi'(0) = L\varphi\}, \quad A\varphi = \varphi',
\end{equation*}
has compact resolvent, the spectrum $\sigma(A)$ consists solely of isolated eigenvalues of finite type. To further characterize $\sigma(A)$, we begin by applying a vector-valued version of the Riesz representation theorem to the linear operator $L$ yielding
\begin{equation*}
    L\varphi = \int_0^h d\zeta(\theta)\, \varphi(-\theta), \quad \forall \varphi \in X,
\end{equation*}
where the kernel $\zeta \in \NBV([0,h], \mathbb{R}^{n \times n})$, and the integral is understood in the Riemann-Stieltjes sense. With this representation, the spectrum of $A$ can be described as
\begin{equation} \label{eq:spectraDelta}
    \sigma(A) = \{ z \in \mathbb{C} : \det \Delta(z) = 0 \},
\end{equation}
where the \emph{characteristic matrix} $\Delta(z) \in \mathbb{C}^{n \times n}$ is defined by
\begin{equation} \label{eq:Delta}
    \Delta(z) \coloneqq zI - \int_0^h e^{-z\theta} \, d\zeta(\theta).
\end{equation}
Further (spectral) properties of $\Delta(z)$ and its connection with $A$ can be found in \cite{Kaashoek1992,Diekmann1995,Hale1993}. An immediate consequence of \eqref{eq:spectraDelta} is that $\sigma(A)$ contains only finitely many elements in any right half-plane of the form $\{ z \in \mathbb{C} : \Re(z) > \gamma \}$ for fixed $\gamma \in \mathbb{R}$, thereby ensuring that all conditions of \Cref{thm:spectralgab} are satisfied. Hence, we conclude that \eqref{eq:introDDE3} admits a center manifold $\mathcal{W}_{\loc}^c$ around $\mathbb{R} \times \{0\}$. Since \eqref{eq:DDE} was just a translation of \eqref{eq:introDDE3} by $x = \overline{x} + y$, it follows immediately that
\begin{equation} \label{eq:Wclocxbar}
    \mathcal{W}_{\loc}^c(\overline{x}) \coloneqq \{ (s,\overline{x} + \mathcal{C}(s,\varphi)) \in \mathbb{R} \times X : \mathcal{C}(s,\varphi) \in B_\delta(X) \mbox{ and } \varphi \in X_0  \}
\end{equation}
is a $T$-periodic $C^k$-smooth $(n_0+1)$-dimensional locally positively invariant manifold in $\mathbb{R} \times X$, defined in the vicinity of $\mathbb{R} \times \{\overline{x}\}$ for a sufficiently small $ \delta > 0$, whose tangent bundle restricted to the zero section is $\mathbb{R} \times X_0$. We call $\mathcal{W}_{\loc}^c(\overline{x})$ a \emph{local center manifold around $\overline{x}$}.

\begin{remark} \label{remark:CMODE}
If we take the maximal delay $h \downarrow0$, then $X$ can be identified with $\mathbb{R}^n$ and the sun-star framework becomes trivial. In this case, \eqref{eq:DDE} simplifies to the nonlinearly periodically forced ODE $\dot{x}(t) = F(t, x(t))$ for $t \geq s$. The center manifold $\mathcal{W}_{\loc}^c(\overline{x})$ for this ODE was already constructed in \cite[Section 2.3.2]{Haragus2011}. Furthermore, explicit (counter)examples of (non-)unique, (non-)$C^\infty$-smooth and (non-)analytic periodic center manifolds for nonlinearly periodically forced ODEs are given in \cite[Section 5]{LentjesCMODE}. Thus, \Cref{thm:LCMT} can be seen as a natural extension of the cited center manifold theorems. \hfill $\lozenge$
\end{remark}

\section{Periodically forced normal form theorem} \label{sec:NFtheorem}

Let us reconsider the setting described in \Cref{sec:DDEs}, where we are in the situation of an $n_0$-dimensional center subspace $X_0$ with $(n_0 + 1)$-dimensional center manifold $\mathcal{W}_{\loc}^c(\overline{x})$. Recall from \Cref{sec:introduction} that our next aim is to study the dynamics of \eqref{eq:DDE} on $\mathcal{W}_{\loc}^c(\overline{x})$ by the means of periodically forced normal forms. Before proving the main result of this section (\Cref{thm:periodicnormalform}), some preliminary steps are necessary.

Note that $X_0 = \oplus_{\lambda \in \sigma_0} E_\lambda$, where $E_\lambda \coloneqq \mathcal{N}((\lambda I - A)^{k_\lambda})$ denotes the $m_\lambda$-dimensional generalized eigenspace of $A$ corresponding to the eigenvalue $\lambda$ and $k_\lambda$ denotes the pole of the resolvent $z \mapsto (zI-A)^{-1}$ of $A$ at $\lambda$. Let $\{\varphi_0, \dots, \varphi_{m_\lambda - 1}\}$ denote the associated Jordan basis consisting of $p$ Jordan chains of maximal rank with $p$ being the dimension of the eigenspace $\mathcal{N}(\lambda I - A)$. Let $J_\lambda \coloneqq \text{diag}(J_{\lambda,1},\dots,J_{\lambda,p})$ be the associated Jordan matrix for $\lambda$, where $J_{\lambda,j}$ denotes the $j$th Jordan block corresponding to the $j$th Jordan chain. We define $M_0 \coloneqq \text{diag}(J_{\lambda_{1}}, \dots, J_{\lambda_{l}})$ to be the Jordan normal form, where $l$ is the number of distinct eigenvalues of $A$ on the imaginary axis. After relabelling, we denote by $\{\varphi_1, \dots, \varphi_{n_0}\}$ the Jordan basis of $X_0$, which allows us to define the coordinate map $Q_0 : \mathbb{R}^{n_0} \to X_0$ by
\begin{equation*}
    Q_0 \xi \coloneqq \sum_{i=1}^{n_0} \xi_i \varphi_i, \quad \forall \xi = (\xi_1,\dots,\xi_{n_0}) \in \mathbb{R}^{n_0}.
\end{equation*}
As indicated in \Cref{sec:introduction}, our first objective is to express any solution $x_t$ of \eqref{eq:DDE} on $\mathcal{W}_{\loc}^c(\overline{x})$ using the natural parametrization \eqref{eq:parametrization} in terms of a normal form coordinate $\xi \in \mathbb{R}^{n_0}$. In this formulation, the only unknown is now the (nonlinear) operator $H : \mathbb{R} \times \mathbb{R}^{n_0} \to X$. To establish the existence and smoothness of this operator, we rely on the invariance property of $\mathcal{W}_{\loc}^c(\overline{x})$. Specifically, let $s \in \mathbb{R}$ be a starting time with $(s,x_s) = (s,\varphi) \in \mathcal{W}_{\loc}^c(\overline{x})$, and let the interval $I \subseteq I_\varphi$, containing $s$, denote the domain of definition of the solution (\Cref{prop:locinv}). Then, a slight modification of \cite[Theorem 3.6]{Clement1989} tells us that $u(t) \coloneqq x_t$ satisfies the abstract ODE
\begin{equation} \label{eq:DDEODE}
    j \dot{u}(t) = A_0^{\odot \star} ju(t) + W(t,x_t), \quad \forall t \in I,
\end{equation}
where $W \in C^{k}(\mathbb{R} \times X, X^{\odot \star})$, which is $T$-periodic in the first component, is defined by $W(t,\varphi) \coloneqq F(t,\varphi) r^{\odot \star}$ for all $(t,\varphi) \in \mathbb{R} \times X$, and the weak$^\star$ generator $A_0^{\odot \star}$ of $T_0^{\odot \star}$ takes the form
\begin{equation*}
    \mathcal{D}(A_0^{\odot \star}) = \{(\alpha,\varphi) \in X^{\odot \star} : \varphi \in \text{Lip}([-h,0],\mathbb{R}^n) \mbox{ and } \varphi(0) = \alpha \}, \quad A_0^{\odot \star} (\alpha,\varphi) = (0,\varphi').
\end{equation*}
In the proof of the theorem below, it proves useful to have an explicit representation of $A^{\odot \star}$. By recalling \eqref{eq:Asunstar}, we obtain
\begin{equation} \label{eq:AsunstarDDE}
    \mathcal{D}(A^{\odot \star}) = \mathcal{D}(A_0^{\odot \star}), \quad A^{\odot \star} (\alpha,\varphi) = (L\varphi,\varphi').
\end{equation}
The following result is the main theorem of this section. It shows that the natural parametrization \eqref{eq:parametrization} indeed holds, where the normal form coordinate $\xi$ satisfies the periodically forced ODE \eqref{eq:Normalformxi} in $\mathbb{R}^{n_0}$. Furthermore, the nonlinear part $P$ in \eqref{eq:Normalformxi} can be characterized polynomially up to order $k$ via the relation \eqref{eq:restrictionP}.

\begin{theorem} \label{thm:periodicnormalform}
There exist $C^k$-smooth maps $H : \mathbb{R} \times \mathbb{R}^{n_0} \to X$ and $P : \mathbb{R} \times \mathbb{R}^{n_0} \to \mathbb{R}^{n_0}$ such that any solution $x_t$ of \eqref{eq:DDE} on $\mathcal{W}_{\loc}^c(\overline{x})$ may be represented as
\begin{equation*}
    x_t = \overline{x} + Q_0\xi + H(t,\xi), \quad \forall t \in I,
\end{equation*}
where the dynamics of \eqref{eq:DDE} on $\mathcal{W}_{\loc}^c(\overline{x})$ is defined by the normal form
\begin{equation} \label{eq:Normalformxi}
    \dot{\xi} = M_0 \xi + P(t,\xi) + \mathcal{O}(|\xi|^{k+1}).
\end{equation}
Here the functions $H$ and $P$ are $T$-periodic in $t$ and at least quadratic in $\xi$, while the $\mathcal{O}$-terms are also $T$-periodic. Moreover, $P$ is a polynomial in $\xi$ of degree less than or equal to $k$ satisfying
\begin{equation} \label{eq:restrictionP}
    e^{t M_0^\star } P(t,e^{-tM_0^\star}\xi) = P(0,\xi), \quad \forall (t,\xi) \in \mathbb{R} \times \mathbb{R}^{n_0}.
\end{equation}
\end{theorem}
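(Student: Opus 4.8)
\emph{Overall plan.} I would first extract from the invariance of $\mathcal{W}_{\loc}^c(\overline{x})$ a concrete parametrization and a concrete reduced ODE on $\mathbb{R}^{n_0}$, and then bring the nonlinearity into the claimed form by a finite sequence of near-identity, $T$-periodic, polynomial coordinate changes, checking at the end that the surviving normal-form part satisfies \eqref{eq:restrictionP}. For the first part, set $u=x-\overline{x}$; since $F(t,\overline{x})=0$ and $D_2F(t,\overline{x})=L$, the abstract ODE \eqref{eq:DDEODE} becomes $j\dot{u}(t)=A^{\odot\star}ju(t)+R(t,u(t))$ with $R(t,\varphi)=G(t,\varphi)r^{\odot\star}$, using \eqref{eq:Asunstar} and \cref{sec:DDEs}. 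By \cref{prop:locinv}(iii), $u(t)=\mathcal{C}(t,P_0u(t))$ on $I$. Applying the lifted spectral projection $P_0^{\odot\star}$, and using that $P_0^{\odot\star}$ commutes with $A^{\odot\star}$, that $P_0^{\odot\star}j=jP_0$, that $A^{\odot\star}j\psi=jA\psi$ for $\psi\in X_0\subseteq\mathcal{D}(A)$, and that $P_0^{\odot\star}R(t,u)\in j(X_0)$ by \cref{hyp:2}, one obtains after applying $j^{-1}$ the finite-dimensional identity $\tfrac{d}{dt}P_0u(t)=A|_{X_0}P_0u(t)+j^{-1}P_0^{\odot\star}R(t,u(t))$. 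Writing $P_0u(t)=Q_0\eta(t)$, using the intertwining $AQ_0=Q_0M_0$ (the defining property of the Jordan basis $\{\varphi_i\}$ and of $M_0$), and applying the inverse $Q_0^{-1}:X_0\to\mathbb{R}^{n_0}$ yields $\dot{\eta}=M_0\eta+\widehat{P}(t,\eta)$ with $\widehat{P}(t,\eta):=Q_0^{-1}j^{-1}P_0^{\odot\star}R(t,\mathcal{C}(t,Q_0\eta))$, together with $x_t=\overline{x}+Q_0\eta+\widetilde{H}(t,\eta)$ where $\widetilde{H}(t,\eta):=\mathcal{C}(t,Q_0\eta)-Q_0\eta$.

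\emph{Properties of the raw data.} Since $R,\mathcal{C}\in C^k$ and are $T$-periodic in $t$, so are $\widehat{P}$ and $\widetilde{H}$. From \eqref{eq:Rassumptions}, from $\mathcal{C}(t,0)=0$, and from the tangency $D_2\mathcal{C}(t,0)=\mathrm{id}$ on $X_0$ (recorded just before \cref{thm:LCMT}), one reads off $\widehat{P}(t,0)=0$, $D_2\widehat{P}(t,0)=0$, $\widetilde{H}(t,0)=0$ and $D_2\widetilde{H}(t,0)=0$, so both are at least quadratic in $\eta$.

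\emph{Normalisation.} After complexifying (and, if desired, returning to real form at the end), Taylor-expand $\widehat{P}(t,\eta)=\sum_{m=2}^{k}\widehat{P}_m(t,\eta)+\mathcal{O}(|\eta|^{k+1})$ into parts homogeneous of degree $m$ in $\eta$, each $T$-periodic in $t$. For $m=2,\dots,k$ in turn, perform a change $\eta\mapsto\eta+\Phi_m(t,\eta)$ with $\Phi_m$ degree-$m$ homogeneous and $T$-periodic, under which the (already updated, by lower orders) degree-$m$ term of the vector field is modified by addition of $\mathcal{L}_m\Phi_m$, where $(\mathcal{L}_m\Phi)(t,\eta):=D_2\Phi(t,\eta)M_0\eta-M_0\Phi(t,\eta)+\partial_t\Phi(t,\eta)$. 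Choosing $\Phi_m$ so that the residual degree-$m$ term $P_m$ lies in a fixed complement of $\operatorname{ran}\mathcal{L}_m$ defines the normal form at order $m$. To solve the homological equation and identify a complement, pass to Fourier series in $t$: on the $\ell$-th Fourier block $\mathcal{L}_m$ acts as the finite-dimensional operator $\operatorname{ad}_{M_0}+i\ell\omega_T\,\mathrm{id}$ on degree-$m$ polynomial maps $\mathbb{C}^{n_0}\to\mathbb{C}^{n_0}$, where $\operatorname{ad}_{M_0}\Psi:=D\Psi\cdot M_0\eta-M_0\Psi$; only finitely many $\ell$ yield a nontrivial kernel, and each finite-dimensional block has a complement of its range. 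Composing the $k-1$ transformations gives one invertible near-identity change $\eta=\xi+\mathcal{O}(|\xi|^2)$; substituting it into the parametrization above and reassembling the quadratic-and-higher terms produces the $C^k$, $T$-periodic map $H$ and a polynomial $P$ of degree $\le k$, with the leftover collected into the $T$-periodic remainder $\mathcal{O}(|\xi|^{k+1})$.

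\emph{The identity \eqref{eq:restrictionP} and the main obstacle.} It remains to choose the complement of $\operatorname{ran}\mathcal{L}_m$ so that the surviving $P$ automatically satisfies \eqref{eq:restrictionP}. Differentiating \eqref{eq:restrictionP} in $t$ shows it is equivalent to $\partial_tP=\operatorname{ad}_{M_0^\star}P$ with $\operatorname{ad}_{M_0^\star}\Psi:=D_2\Psi\cdot M_0^\star\eta-M_0^\star\Psi$, i.e.\ to $P$ lying in $\ker\mathcal{L}_m^{*}$, where $\mathcal{L}_m^{*}=\operatorname{ad}_{M_0^\star}-\partial_t$ is the adjoint of $\mathcal{L}_m$ with respect to the Fischer inner product on polynomials together with the $L^2$-inner product in $t$ (for which $\operatorname{ad}_{M_0}^{*}=\operatorname{ad}_{M_0^\star}$ and $\partial_t^{*}=-\partial_t$). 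Since each Fourier block of $\mathcal{L}_m$ is finite-dimensional, $\ker\mathcal{L}_m^{*}$ is a genuine complement of $\operatorname{ran}\mathcal{L}_m$; taking it as the complement in the normalisation step forces $P$ to obey \eqref{eq:restrictionP}, and identifying $H$ with the pullback of $\widetilde{H}$ under the total coordinate change completes the argument. The delicate point — the expected main obstacle — is handling $\mathcal{L}_m$ \emph{with} the $\partial_t$-term on the infinite-dimensional space of $T$-periodic degree-$m$ maps and checking that the complement $\ker\mathcal{L}_m^{*}$ coincides with the explicit condition \eqref{eq:restrictionP}; this is precisely the adaptation of \cite[Theorem 3.5.2]{Haragus2011} to the present DDE reduction, and the Fourier decomposition in $t$ is what makes it tractable by diagonalising $\partial_t$ and reducing to the classical finite-dimensional $\operatorname{ad}_{M_0}$-problems shifted by $i\ell\omega_T$. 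A minor point is that, since $R$ (hence $\widehat{P}$) is only $C^k$, the normalisation stops at order $k$, which is why the remainder is left as $\mathcal{O}(|\xi|^{k+1})$.
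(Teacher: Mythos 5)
Your proposal is correct in outline, but it takes a genuinely different route from the paper. You first perform an explicit center-manifold \emph{reduction}: applying the lifted projection $P_0^{\odot\star}$ to the abstract ODE and using $u(t)=\mathcal{C}(t,P_0u(t))$ from \cref{prop:locinv}, you obtain a finite-dimensional $T$-periodic ODE $\dot{\eta}=M_0\eta+\widehat{P}(t,\eta)$ on $\mathbb{R}^{n_0}$, and only then normalize it by a finite sequence of near-identity, $T$-periodic polynomial transformations, choosing the complement of $\operatorname{ran}\mathcal{L}_m$ as $\ker(\operatorname{ad}_{M_0^\star}-\partial_t)$, which is exactly the condition \eqref{eq:restrictionP} in its differentiated form \eqref{eq:Normalformxi2}; finally you pull the coordinate change back into $H$. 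The paper instead follows the Iooss parametrization method: after an implicit-function-theorem step it posits $x_t=\overline{x}+Q_0\xi+H(t,\xi)$ with the normal form already built in, derives the homological equation \eqref{eq:Hqequation} for the Taylor coefficients $H_q,P_q$ directly in $X^{\odot\star}$, solves the hyperbolic components $H_q^{\pm}$ by explicit (weak$^\star$) integral formulas over the semigroup, and observes that the center component is verbatim the periodically forced ODE problem of \cite[Theorem 3.5.2]{Haragus2011}, from which \eqref{eq:restrictionP} and the continuity of $H_q^0,P_q$ are imported. Your route is more elementary in that it never solves equations on the big space (the hyperbolic part of $H$ is absorbed into $\mathcal{C}$, i.e.\ into $\widetilde{H}$), whereas the paper's route yields order-by-order equations for \emph{all} components of $H_q$, which is precisely the machinery reused in \cref{sec:normalization} to compute normal form coefficients, and it delegates the one delicate analytic point --- solvability of the time-dependent homological operator and identification of the complement giving \eqref{eq:restrictionP} --- to the cited ODE theorem rather than re-deriving it. Your Fourier-block treatment of that point is sound (only finitely many modes $\ell$ contribute a kernel, and $\|(\operatorname{ad}_{M_0}+i\ell\omega_T)^{-1}\|=\mathcal{O}(1/|\ell|)$ on the complement gives convergence), but if you carry it out yourself rather than citing \cite{Haragus2011}, you should additionally record why the resulting $\Phi_m$, and hence $H$ and $P$, have the claimed regularity and $T$-periodicity in $t$; this bookkeeping is at the same level of detail that the paper settles by citation, so it is an omission of exposition rather than a gap in the argument.
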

\begin{proof}
The proof of this theorem is divided into several steps.

\textbf{Step 1: Existence.} Consider a point $(t_0,\Psi_0) \in \mathcal{W}_{\loc}^c(\overline{x})$ together with an associated fixed pair $(t_0,\xi_0) \in \mathbb{R} \times \mathbb{R}^{n_0}$ such that \eqref{eq:decomposition X hyp} allows us to write
\begin{equation} \label{eq:Psi0}
    \Psi_0 = \overline{x} + Q_0\xi_0 + \hat{H}(t_0,\xi_0),
\end{equation}
with $Q_0 \xi_0 \in X_0$ and $\hat{H}(t_0,\xi_0) \in X_{-} \oplus X_{+}$. Assume for a moment that $\hat{H}$ is $C^1$-smooth, which will be proven later, independent on the upcoming construction. Consider then the continuously differentiable operator $\mathcal{K} : \mathbb{R} \times \mathbb{R}^{n_0} \times X \to X$ defined by
\begin{equation*}
    \mathcal{K}((t,\xi),\Psi) \coloneqq \Psi - (\overline{x} + Q_0\xi + \hat{H}(t,\xi)),
\end{equation*}
and note from \eqref{eq:Psi0} that $\mathcal{K}((t_0,\xi_0),\Psi_0) = 0$. Since $D_\Psi\mathcal{K}((t_0,\xi_0),\Psi_0) = I_X$, the implicit theorem in Banach spaces guarantees that there exists a continuously differentiable function $\Psi : \mathbb{R}\times \mathbb{R}^{n_0} \to X$, defined in a sufficiently small neighbourhood of $\mathbb{R} \times \{0\}$ such that $\mathcal{K}((t,\xi),\Psi(t,\xi)) = 0$, or equivalently,
\begin{equation} \label{eq:Psi(t,xi)}
    \Psi(t,\xi) = \overline{x} + Q_0\xi + \hat{H}(t,\xi),
\end{equation}
which proves that $\Psi$ is a parametrization of (the second component of) $\mathcal{W}_{\loc}^c(\overline{x})$. Similar to the proof of \cite[Theorem 1]{Iooss1988} and \cite[Theorem III.7]{Iooss1999}, we allow $\hat{H}(t,\cdot)$ taking values in $X_0$. The map $\hat{H}$ with this additional property will be denoted by $H$. Hence, we incorporate an eventual nonlinear change of coordinates on $\xi$ in \eqref{eq:Psi(t,xi)} in such a way that \eqref{eq:DDE} written on $\mathcal{W}_{\loc}^c(\overline{x})$ is already in normal form. The remaining claims will be proven in the following steps.

\textbf{Step 2: Taylor expansion.} Let us write \eqref{eq:DDE} in the form \eqref{eq:DDEODE} and notice that
\begin{equation} \label{eq:TaylorexpansionW}
    W(t,\overline{x} + \varphi) = W(t,\overline{x}) + B\varphi + \sum_{q=2}^k W_q(t,\varphi^{(q)}) + \mathcal{O}(\|\varphi\|_{\infty}^{k+1}),
\end{equation}
where $B\varphi = [L\varphi]r^{\odot \star}$ with $L = D_2F(t,\overline{x})$ is the bounded linear perturbation, and the nonlinear terms are given by $W_q(t,\varphi^{(q)}) = F_q(t,\varphi^{(q)})r^{\odot \star}$ with $F_q(t,\varphi^{(q)}) \coloneqq \frac{1}{q!} D_2^qF(t,\overline{x})(\varphi^{(q)})$. Here, $D_2^qF(t,\overline{x}) : X^q \to \mathbb{R}^n$ denotes the partial $q$th order Fr\'echet derivative with respect to the second component evaluated at $(t,\overline{x})$, and $\varphi^{(q)} \coloneqq (\varphi,\dots,\varphi) \in X^q \coloneqq X \times \dots \times X$ for $q \in \{2,\dots,k\}$. We also expand the maps $H$ and $P$ accordingly
\begin{equation*}
    H(t,\xi) = \sum_{q=2}^{k} H_q(t,\xi^{(q)}) + \mathcal{O}(|\xi|^{k+1}), \quad P(t,\xi) = \sum_{q=2}^k P_q(t,\xi^{(q)}),
\end{equation*}
with coefficients $H_q(t,\xi^{(q)}) \in X$ and $P_q(t,\xi^{(q)}) \in \mathbb{R}^{n_0}$, where $\xi^{(q)} \coloneqq (\xi,\dots,\xi) \in [\mathbb{R}^{n_0}]^q$. To obtain a representation of $H_q(t,\xi^{(q)})$, we use the invariance property of $\mathcal{W}_{\loc}^c(\overline{x})$ from \Cref{prop:locinv} and therefore start comparing the expansions of
\begin{equation*}
    \frac{d}{dt} j(\overline{x} + Q_0\xi + H(t,\xi)) = j \bigg( Q_0 \dot{\xi} + \frac{\partial}{\partial t} H(t,\xi) +  D_\xi H(t,\xi) \dot{\xi} \bigg)
\end{equation*}
and
\begin{equation*}
    A_{0}^{\odot \star} j(\overline{x} + Q_0\xi + H(t,\xi)) + W(t,\overline{x} + Q_0\xi + H(t,\xi))
\end{equation*}
by substituting $\dot{\xi}$ by \eqref{eq:Normalformxi} and recalling \eqref{eq:TaylorexpansionW}. Using the expansions of $W,H$ and $P$, we get
\begin{align*}
    &j\bigg( \sum_{q=2}^k \frac{\partial}{\partial t} H_q(t,\xi) + \bigg(Q_0 + \sum_{q=2}^k D_\xi H_q(t,\xi^{(q)}) \bigg) \bigg( M_0 \xi + \sum_{q=2}^k P_q(t,\xi^{(q)}) \bigg) \bigg) + \mathcal{O}(|\xi|^{k+1}) \\
    &= A_0^{\odot \star} j(\overline{x}) + W(t,\overline{x}) + A^{\odot \star}j \bigg( Q_0 \xi + \sum_{q=2}^k H_q(t,\xi^{(q)}) \bigg) + \sum_{q=2}^k W_q\bigg( t, \bigg[Q_0 \xi + \sum_{p=2}^k H_q(t,\xi^{(p)}) \bigg]^{(q)} \bigg).
\end{align*}
\textbf{Step 3: Collecting terms.} Let us now compare the $\xi^{(q)}$-terms on both sides of this equation for $q \in \{0,\dots,k\}$. The constant and linear terms give us
\begin{equation} \label{eq:xixi2 terms}
    W(t,\overline{x}) = 0, \quad A^{\odot \star} j(Q_0 \xi) = j(M_0Q_0 \xi).
\end{equation}
The first identity confirms that $\overline{x}$ is an equilibrium of \eqref{eq:DDEODE} and thus of \eqref{eq:DDE}, while the second identity is also confirmed since $A Q_0 \xi = M_0 Q_0 \xi$ and $Q_0 \xi \in \mathcal{D}(A)$ as $X_0 \subseteq \mathcal{D}(A)$ by the Jordan chain structure of the generalized eigenspaces. Collecting the $\xi^{(q)}$-terms for $q \in \{2,\dots,k\}$ yields
\begin{equation} \label{eq:Hqequation}
    \bigg( - \frac{\partial}{\partial t} + A^{\odot \star}\bigg) j(H_q(t,\xi^{(q)})) = j(D_\xi H_q(t,\xi^{(q)})M_0 \xi + Q_0 P_q(t,\xi^{(q)})) - R_q(t,\xi^{(q)}),
\end{equation}
where $R_q$ depends on $R_{q'}$ for $2 \leq q' \leq q$, and on $H_{q'}$ and $P_{q'}$ for $2 \leq q' \leq q-1$. For example, the quadratic term $R_2(t,\xi^{(q)})$ is given by $W_2(t,(Q_0 \xi)^{(2)})$.

\textbf{Step 4: Projecting on subspaces.} Our aim is to solve \eqref{eq:Hqequation} for $q \in \{2,\dots,k\}$. To achieve this, let us project this equation onto the (un)stable and center subspace by using the decomposition \eqref{eq:decomposition X hyp}. Therefore, consider the decompositions
\begin{align*}
    H_q(t,\xi^{(q)}) &= Q_0 H_q^0(t,\xi^{(q)}) + H_q^{+}(t,\xi^{(q)}) + H_q^{-}(t,\xi^{(q)}),\\
    R_q(t,\xi^{(q)}) &= j(Q_0 R_q^0(t,\xi^{(q)})) + R_q^{+}(t,\xi^{(q)}) + R_q^{-}(t,\xi^{(q)}),
\end{align*}
where $H_q^0(t,\xi^{(q)}), R_q^0(t,\xi^{(q)}) \in \mathbb{R}^{n_0}, H_q^{\pm}(t,\xi^{(q)}) = P_{\pm} H_q(t,\xi^{(q)})$ and $R_q^{\pm}(t,\xi^{(q)}) = P_{\pm}^{\odot \star} R_q(t,\xi^{(q)})$ for all $t \in \mathbb{R}$ and $\xi \in \mathbb{R}^{n_0}$. Substituting these decompositions into \eqref{eq:Hqequation} and recalling the identities from \eqref{eq:xixi2 terms} yields for the left-hand side of this equation
\begin{align} \label{eq:Hq+-}
\begin{split}
    \bigg( - \frac{\partial}{\partial t} + A^{\odot \star}\bigg) j(H_q(t,\xi^{(q)})) &= j\bigg(Q_0 \bigg( - \frac{\partial}{\partial t} + M_0\bigg)H_q^0(t,\xi^{(q)}) \bigg) \\
    &+ \bigg( - \frac{\partial}{\partial t} + A^{\odot \star}\bigg) j(H_q^{-}(t,\xi^{(q)}) + H_q^{+}(t,\xi^{(q)})),
\end{split}
\end{align}
which must be equal to the right-hand side of \eqref{eq:Hqequation}.

\textbf{Step 5: Existence of $H_q^{\pm}$.} With \eqref{eq:Hq+-} in mind, note that \eqref{eq:Hqequation} on $X_{\pm}^{\odot \star}$ is equivalent to
\begin{equation*}
    \bigg( - \frac{\partial}{\partial t} + A^{\odot \star}\bigg) j(H_q^{\pm}(t,\xi^{(q)})) = j(D_\xi H_{q}^{\pm}(t,\xi^{(q)}) M_0 \xi) - R_q^{\pm}(t,\xi^{(q)}).
\end{equation*}
Along the same lines of the proof in \cite[Theorem 24]{Article2}, we obtain using \Cref{hyp:1}, \Cref{hyp:2} and \eqref{eq:Tsunstarj} the following well-defined explicit integral representations
\begin{align*}
    H_q^{+}(t,\xi^{(q)}) &= - \int_t^\infty T(t-\theta)j^{-1}R_q^+(\theta,(e^{(\theta-t) M_0} \xi)^{(q)}) d\theta,\\
    H_q^{-}(t,\xi^{(q)}) &= j^{-1} \int_{-\infty}^t T^{\odot \star}(t-\theta) R_q^-(\theta,(e^{(\theta-t) M_0} \xi)^{(q)}) d\theta.
\end{align*}
The expression for $H_q^{+}$ is obtained from recalling \eqref{eq:Tsunstarj} and must be interpreted as a Riemann integral, while the expression for $H_q^{-}$ must be interpreted as a weak$^\star$ Riemann integral. Similarly as in the proof of \cite[Theorem 24]{Article2}, we obtain that $H_q^{\pm}$ is continuous and $T$-periodic in the first component. 

\textbf{Step 5: Existence of $H_q^{0}$.} Similar to previous step, but now considering \eqref{eq:Hqequation} on $j(X_0)$ yields
\begin{equation*}
    - \frac{\partial}{\partial t} H_q^0(t,\xi^{(q)}) + 
    M_0 H_q^0(t,\xi^{(q)}) -  D_\xi  H_q^0(t,\xi^{(q)})M_0 \xi = P_q(t,\xi^{(q)}) - R_q^0(t,\xi^{(q)}).
\end{equation*}
This is exactly the same equation as obtained in \cite[Theorem 5.2]{Haragus2011} and so the same proof can be followed to obtain \eqref{eq:restrictionP} as a condition for $P$. It is also proven in the mentioned reference that the maps $H_q^0$ and $P_q$ are continuous for each $q=2,\dots,k$, which proves that $H$ and $P$ are $C^k$-smooth.
\end{proof}

From this theorem, we obtain three additional results. First, note that $\mathcal{W}_{\loc}^c(\overline{x})$ can be written locally around $\mathbb{R} \times \{\overline{x}\}$ as
\begin{equation*}
    \mathcal{W}_{\loc}^c(\overline{x}) = \{ (t,\overline{x} + Q_0 \xi + H(t,\xi)) \in \mathbb{R} \times X : \xi \in \mathbb{R}^{n_0} \},
\end{equation*}
and has the same properties as the description of $\mathcal{W}_{\loc}^c(\overline{x})$ given in \eqref{eq:Wclocxbar}. Second, this shows that $\mathcal{W}_{\loc}^c(\overline{x})$ has a $T$-periodic \emph{parametrization} since the map $t \mapsto \overline{x} + Q_0 \xi + H(t,\xi)$ is $T$-periodic for all $\xi \in \mathbb{R}^{n_0}$. Third, a straightforward computation shows that \eqref{eq:restrictionP} is equivalent with
\begin{equation} \label{eq:Normalformxi2}
    D_1 P(t,\xi) = D_2 P(t,\xi)M_0^\star \xi - M_0^\star P(t,\xi), \quad \forall (t,\xi) \in \mathbb{R} \times \mathbb{R}^{n_0},
\end{equation}
which might be more helpful for the derivation of periodically forced normal forms in concrete applications. The reader with knowledge on normal form theory might recognise \eqref{eq:Normalformxi2} as an inner product normal form where the Lie derivative and homological operator are present, see \cite{Murdock2003,Elphick1987,Elphick1987a} for more information.

\section{Spectral computations for periodically forced DDEs} \label{sec:spectral}
To compute the critical normal form coefficients of certain bifurcations in \Cref{sec:normalization} using \eqref{eq:homological}, we require explicit representations of certain linear objects. Specifically, we need explicit expressions for the (generalized) eigenfunctions of $A$ and $A^\star$. In addition, we must be able to analyse the solutions of a particular periodic linear operator equation of the form \eqref{eq:solvability}.

We start with recalling the (generalized) (adjoint) eigenfunctions in the setting of autonomous linear DDEs. These can be expressed explicitly in terms of Jordan chains of the characteristic matrix $\Delta(z)$ from \eqref{eq:Delta}, see \cite[Theorem IV.5.5 \& IV.5.9]{Diekmann1995} and \cite[Proposition 1]{Bosschaert2024a} for a general result. As our focus in \Cref{sec:normalization} is on codim 1 bifurcations of equilibria in nonlinearly periodically forced DDEs, we restrict our attention to the case where $\lambda$ is a simple eigenvalue of $A$ (and $A^\star$).

\begin{proposition} \label{prop:eigenfunctions}
Let $\lambda$ be a simple eigenvalue of $A$. If $q$ is a right null vector of $\Delta(\lambda)$ then $\varphi$ given by
\begin{equation} \label{eq:eigfunction}
    \varphi(\theta) = e^{\lambda \theta} q, \quad \forall \theta \in [-h,0],
\end{equation}
is an eigenfunction of $A$ corresponding to $\lambda$. If $p$ is a left null vector of $\Delta(\lambda)$ then $\varphi^\odot$ given by
\begin{equation} \label{eq:adjointeigfunction}
    \varphi^\odot(\theta) = \bigg( p, \theta \mapsto \int_{\theta}^h e^{\lambda (\theta-s)} d\eta(s) \bigg), \quad \forall \theta \in [0,h],
\end{equation}
is an eigenfunction of $A^\star$ corresponding to $\lambda$. Moreover, the following identity holds
\begin{equation*}
    \langle \varphi^\odot, \varphi \rangle = p \Delta'(\lambda)q \neq 0.
\end{equation*}
\end{proposition}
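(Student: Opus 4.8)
The plan is to verify each of the three assertions by direct computation in the sun-star calculus, using the explicit descriptions of $A$, $A^\star$, $A^\odot$ and the dual pairings recorded in \eqref{eq:spacesDDEs} and \eqref{eq:naturalpairings}. First, for the eigenfunction of $A$: given a right null vector $q$ of $\Delta(\lambda)$, set $\varphi(\theta) = e^{\lambda\theta}q$. Then $\varphi$ is $C^1$ with $\varphi'(\theta) = \lambda e^{\lambda\theta}q = \lambda\varphi(\theta)$, so to show $\varphi \in \mathcal{D}(A)$ and $A\varphi = \lambda\varphi$ it suffices, by the description $\mathcal{D}(A) = \{\varphi \in X : \varphi' \in X,\ \varphi'(0) = L\varphi\}$, to check the boundary condition $\varphi'(0) = L\varphi$. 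This reads $\lambda q = L\varphi = \int_0^h d\zeta(\theta)\,e^{-\lambda\theta}q = \big(\int_0^h e^{-\lambda\theta}d\zeta(\theta)\big)q$, which is precisely $\Delta(\lambda)q = 0$ by \eqref{eq:Delta}. So the first claim is immediate.

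Second, for the adjoint eigenfunction: I would work with the concrete realization $X^\odot \cong \mathbb{R}^{n\star}\times L^1([0,h],\mathbb{R}^{n\star})$ and recall the explicit form of $A^\odot$ (it is the transpose/reflected analogue of $A_0^\odot$ adjusted by $B^\star$; the relevant description is standard in \cite{Diekmann1995}). One then checks that $\varphi^\odot = \big(p,\ \theta\mapsto\int_\theta^h e^{\lambda(\theta-s)}d\eta(s)\big)$ lies in $\mathcal{D}(A^\odot)$ and satisfies $A^\odot\varphi^\odot = \lambda\varphi^\odot$, which reduces after differentiation in $\theta$ to two conditions: a differential relation on $(0,h)$ that is automatic from the integral form, and a boundary/compatibility condition at $\theta = 0$ that translates to $p\Delta(\lambda) = 0$, i.e.\ $p$ being a left null vector. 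Alternatively — and this may be cleaner — I would verify the defining property of the adjoint directly, namely $\langle\varphi^\odot, A\psi\rangle = \lambda\langle\varphi^\odot,\psi\rangle$ for all $\psi\in\mathcal{D}(A)$, using the pairing $\langle(c,g),\psi\rangle = c\psi(0) + \int_0^h g(\theta)\psi(-\theta)d\theta$, an integration by parts in the Riemann–Stieltjes integral, and the boundary condition $\psi'(0) = L\psi$ to collapse the boundary terms. Note $\eta$ here should be read as the kernel $\zeta$ of $L$ from \cref{sec:DDEs}.

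Third, for the normalization identity $\langle\varphi^\odot,\varphi\rangle = p\Delta'(\lambda)q$: I would simply substitute $\varphi(\theta) = e^{\lambda\theta}q$ and $\varphi^\odot = (p,g)$ with $g(\theta) = \int_\theta^h e^{\lambda(\theta-s)}d\eta(s)$ into the pairing formula, giving $\langle\varphi^\odot,\varphi\rangle = pq + \int_0^h\!\!\int_\theta^h e^{\lambda(\theta-s)}d\eta(s)\,e^{-\lambda\theta}q\,d\theta = pq + \int_0^h\!\!\int_0^s e^{-\lambda s}d\theta\,d\eta(s)\,q$ after swapping the order of integration (Fubini for the Stieltjes measure), $= pq + p\big(\int_0^h s\,e^{-\lambda s}d\eta(s)\big)q$. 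Since $\Delta(z) = zI - \int_0^h e^{-z\theta}d\eta(\theta)$, we have $\Delta'(\lambda) = I + \int_0^h \theta e^{-\lambda\theta}d\eta(\theta)$, so the sum above is exactly $p\Delta'(\lambda)q$. The nonvanishing $p\Delta'(\lambda)q\neq 0$ then follows from $\lambda$ being a \emph{simple} eigenvalue: simplicity forces $\lambda$ to be a simple root of $\det\Delta(z)$, equivalently $p\Delta'(\lambda)q\neq 0$ by the standard algebraic criterion for simple zeros of $\det\Delta$ in terms of left/right null vectors (see \cite{Kaashoek1992,Diekmann1995}); one may also argue via $\langle\varphi^\odot,\varphi\rangle = 0$ contradicting the direct sum decomposition $X = \mathcal{N}(\lambda I - A)\oplus\mathcal{R}(\lambda I - A)$ at a simple eigenvalue.

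The main obstacle is the middle step: getting the domain and action of $A^\odot$ (equivalently $A^\star$) right in the concrete $\mathrm{NBV}$ / $L^1$ representation, including the correct boundary term at $\theta = 0$, since the reflection $\theta\mapsto -\theta$ and the normalization conventions for $\mathrm{NBV}$ make sign and endpoint bookkeeping delicate. Everything else is routine substitution and an application of Fubini's theorem for Stieltjes integrals.
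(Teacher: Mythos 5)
Your verification is correct and is essentially the standard argument: the paper itself does not prove this proposition but recalls it from \cite[Theorem IV.5.5 \& IV.5.9]{Diekmann1995} and \cite[Proposition 1]{Bosschaert2024a}, and the direct computation you outline (boundary condition $\varphi'(0)=L\varphi$ giving $\Delta(\lambda)q=0$, the adjoint relation via the pairing \eqref{eq:naturalpairings} and Fubini giving $p\Delta(\lambda)=0$, and the simple-zero criterion for $p\Delta'(\lambda)q\neq 0$) is exactly how it is established there. One bookkeeping point: reading $\eta$ merely as $\zeta$ is not enough, since the second component of $\varphi^\odot$ must lie in $L^1([0,h],\mathbb{R}^{n\star})$, so the factor $p$ belongs inside, i.e.\ $g(\theta)=p\int_\theta^h e^{\lambda(\theta-s)}\,d\zeta(s)$ (consistent with the autonomous specialization of \cref{prop:curlyeigenfunctions}); your intermediate line in the pairing computation omits this $p$ and your final line silently reinstates it, and with that correction the computation, including the nondegeneracy conclusion, goes through.
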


Next, we turn our attention to solvability of periodic linear operator equations of the form
\begin{equation} \label{eq:solvability}
    (zI - \mathcal{A}^{\odot \star})(v_0,v) = (w_0,w),
\end{equation}
where $z \in \mathbb{C}$, $(w_0,w) \in C_T(\mathbb{R},X^{\odot \star})$ is given, and $(v_0,v) \in \mathcal{D}(\mathcal{A}^{\odot \star})$ is the unknown. In general, both $z$ and the right-hand side of \eqref{eq:solvability} will have a nontrivial imaginary part and so it is necessary to regard
\eqref{eq:solvability} as its complexification. Here, the (complexification of the) linear operator $\mathcal{A}^{\odot \star} : \mathcal{D}(\mathcal{A}^{\odot \star}) \subseteq C_T(\mathbb{R},X^{\odot \star}) \to C_T(\mathbb{R},X^{\odot \star})$ is defined by
\begin{align}
\begin{split} \label{eq:curlyAsunstar}
    \mathcal{D}(\mathcal{A}^{\odot \star}) &\coloneqq \{ \boldsymbol{\varphi}^{\odot \star} \in C_T^1(\mathbb{R},X^{\odot \star}) : \boldsymbol{\varphi}^{\odot \star}(t) \in \mathcal{D}(A^{\odot \star}) \mbox{ for all } t \in \mathbb{R} \}, \\
    (\mathcal{A}^{\odot \star} \boldsymbol{\varphi}^{\odot \star})(t) &\coloneqq A^{\odot \star}\boldsymbol{\varphi}^{\odot \star}(t) - \dot{\boldsymbol{\varphi}}^{\odot \star}(t),
\end{split}
\end{align}
where we recall from \eqref{eq:AsunstarDDE} the domain and action of $A^{\odot \star}$. A natural extension of this operator, where $A^{\odot \star}$ is $T$-periodic rather than autonomous, arises in the analysis of bifurcations of limit cycles in DDEs. As a result, we can mostly draw upon the technical results established in \cite{Article2,Bosschaert2025}. For example, consider the linear operator $\mathcal{A}^\odot : \mathcal{D}(\mathcal{A}^\odot) \subseteq C_T(\mathbb{R}, X^\odot) \to C_T(\mathbb{R}, X^\odot)$ given by
\begin{align*}
    \mathcal{D}(\mathcal{A}^\odot) &\coloneqq \left\{ \boldsymbol{\varphi}^\odot \in C_T^1(\mathbb{R}, X^\odot) : \boldsymbol{\varphi}^\odot(t) \in \mathcal{D}(A^\odot) \text{ for all } t \in \mathbb{R} \right\}, \\
    (\mathcal{A}^\odot \boldsymbol{\varphi}^\odot)(t) &\coloneqq A^\odot\boldsymbol{\varphi}^\odot(t) + \dot{\boldsymbol{\varphi}}^\odot(t),
\end{align*}
then $\mathcal{A}^{\odot \star}$ is the unique adjoint of $\mathcal{A}^\odot$ with respect to the (complexified) bilinear pairing $\langle \cdot, \cdot \rangle_T : C_T(\mathbb{R}, X^{\odot \star}) \times C_T(\mathbb{R}, X^\odot) \to \mathbb{C}$ defined by
\begin{equation} \label{eq:pairingT}
    \langle \boldsymbol{\varphi}^{\odot \star}, \boldsymbol{\varphi}^\odot \rangle_T \coloneqq \frac{1}{T} \int_0^T \langle \boldsymbol{\varphi}^{\odot \star}(t), \boldsymbol{\varphi}^\odot(t) \rangle \, dt, \quad \forall (\boldsymbol{\varphi}^{\odot \star}, \boldsymbol{\varphi}^\odot) \in C_T(\mathbb{R}, X^{\odot \star}) \times C_T(\mathbb{R}, X^\odot).
\end{equation}
Analogously, one can introduce the linear operator $\mathcal{A}$ and its uniquely defined adjoint $\mathcal{A}^\star$ in terms of the nondegenerate bilinear pairing from \eqref{eq:pairingT}, but now defined on the product space $C_T(\mathbb{R}, X^{\star}) \times C_T(\mathbb{R}, X)$. Furthermore, the embedding $j$ can be lifted to a mapping $\iota : C_T(\mathbb{R}, X) \to C_T(\mathbb{R}, X^{\odot \star})$, defined by $(\iota \boldsymbol{\varphi})(t) \coloneqq j\boldsymbol{\varphi}(t)$ for all $t \in \mathbb{R}$. Due to the $\odot$-reflexivity of $X$ with respect to $T_0$, we know that $\iota \boldsymbol{\varphi}$ takes values in $X^{\odot \odot}$. In what follows, we will frequently write $\iota \varphi$ for a function $\varphi \in X$, where $\varphi$ must then be understood as a constant function in $C_T(\mathbb{R},X)$.

From the results in \cite[Section 3]{Article2}, it turns out that $\mathcal{A}$, $\mathcal{A}^\star$, $\mathcal{A}^{\odot}$, and $\mathcal{A}^{\odot \star}$ are non-closed but closable linear operators. Consequently, the standard notion of the spectrum for closed linear operators used earlier in this article (\Cref{thm:spectralgab}) is not applicable in this context. We thus recall the generalized definition of spectra for closable linear operators used in \cite{Article2,Bosschaert2025,Taylor1986}, which coincides with the standard definition of closed linear operators when the linear operator $A$ in \Cref{def:spectrumclosable} is itself closed.

\begin{definition} \label{def:spectrumclosable}
Let $Y$ be a complex Banach space and $J : \mathcal{D}(J) \subseteq Y \to Y$ a closable linear operator. A complex number $z$ belongs to the \emph{resolvent set} $\rho(J)$ of $J$ if the operator $zI - J$ is injective, has dense range $\mathcal{R}(zI-J) \supseteq \mathcal{D}(J)$, and the \emph{resolvent} of $J$ at $z$ 
defined by $(zI-J)^{-1}: \mathcal{R}(zI-J) \to \mathcal{D}(J)$ is a bounded linear operator. The \emph{spectrum} $\sigma(J)$ of $J$ is defined to be the complement of $\rho(J)$ in $\mathbb{C}$, and the \emph{point spectrum} $\sigma_p(J) \subseteq \sigma(J)$ of $J$ is the set of those $\sigma \in \mathbb{C}$ such that $\sigma I - J$ is not injective. \hfill $\lozenge$
\end{definition}

Having developed a complete understanding of the linear operators involved in \eqref{eq:solvability}, we now turn our attention to the question of solvability. From \eqref{eq:AsunstarDDE} and \eqref{eq:curlyAsunstar}, it follows that \eqref{eq:solvability} can be interpreted as a periodic linear operator equation on the product space $C_T(\mathbb{R},\mathbb{C}^n) \times C_T(\mathbb{R},L^{\infty}([-h,0],\mathbb{C}^n)) \cong C_T(\mathbb{R},X^{\odot \star})$. To analyse the first component of this equation, some preparatory steps are required. Therefore, let us introduce \emph{characteristic operator} $\boldsymbol{\Delta}(z) : \mathcal{D}(\boldsymbol{\Delta}(z)) \coloneqq C_T^1(\mathbb{R},\mathbb{C}^n) \subseteq C_T(\mathbb{R},\mathbb{C}^n) \to C_T(\mathbb{R},\mathbb{C}^n)$ defined by
\begin{equation} \label{eq:Delta(z)q}
    (\boldsymbol{\Delta}(z)\boldsymbol{q})(t) \coloneqq \dot{\boldsymbol{q}}(t) + z \boldsymbol{q}(t) - \int_0^h d \zeta (\theta) e^{-z \theta} \boldsymbol{q}(t - \theta), \quad \forall \boldsymbol{q} \in C_T^1(\mathbb{R},\mathbb{C}^n),
\end{equation}
which can be interpreted as a $T$-periodic extension of the characteristic matrix $\Delta(z)$ from \eqref{eq:Delta}. Spectral properties of a generalization of this linear operator, where $\zeta$ is $T$-periodic in an additional component, have been extensively studied in \cite[Section 3]{Bosschaert2025}. The following result will be helpful in the proof of the resolvent representation for \eqref{eq:solvability} given in \Cref{prop:resolvent}, where we used the shorthand notation $\omega_T \coloneqq 2 \pi / T$ as this frequency will also play an important role in \Cref{subsec:hopf}.

\begin{lemma} \label{lemma:characoperator}
The linear operator $\boldsymbol{\Delta}(z)$ is closed and there holds
\begin{equation}  \label{eq:charoperatorspectra}
    \{ z \in \mathbb{C} : \boldsymbol{\Delta}(z) \text{ is not invertible} \} \hspace{-2pt} = \hspace{-2pt} \{ z \in \mathbb{C} : \det \Delta(z+im\omega_T) = 0 \text{ for some } m \in \mathbb{Z}  \} \hspace{-2pt} = \hspace{-2pt} \sigma(A) + i\omega_T \mathbb{Z}.  
\end{equation}
\end{lemma}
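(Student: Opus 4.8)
The strategy is to pass to Fourier space, where the periodic operator $\boldsymbol{\Delta}(z)$ decouples into a countable family of finite-dimensional matrices $\Delta(z + im\omega_T)$, and then translate invertibility statements back and forth. The first task is to establish closedness of $\boldsymbol{\Delta}(z)$. Since $\dot{\boldsymbol{q}}$ is the only unbounded piece (the multiplication by $z$ and the delay term $\boldsymbol{q} \mapsto \int_0^h d\zeta(\theta) e^{-z\theta}\boldsymbol{q}(\cdot-\theta)$ are bounded on $C_T(\mathbb{R},\mathbb{C}^n)$), closedness of $\boldsymbol{\Delta}(z)$ reduces to closedness of the derivative operator $\tfrac{d}{dt}$ on $C_T^1(\mathbb{R},\mathbb{C}^n) \subseteq C_T(\mathbb{R},\mathbb{C}^n)$, which is standard: if $\boldsymbol{q}_n \to \boldsymbol{q}$ and $\dot{\boldsymbol{q}}_n \to \boldsymbol{p}$ uniformly, then $\boldsymbol{q}(t) - \boldsymbol{q}(0) = \int_0^t \boldsymbol{p}(\sigma)\,d\sigma$, so $\boldsymbol{q} \in C_T^1$ with $\dot{\boldsymbol{q}} = \boldsymbol{p}$. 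A bounded perturbation of a closed operator is closed, giving the first claim.

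For the spectral identity, I would argue that $\boldsymbol{\Delta}(z)$ is invertible (as a bounded bijection onto $C_T(\mathbb{R},\mathbb{C}^n)$, equivalently with bounded inverse by the open mapping theorem once closedness and bijectivity are known) if and only if for every $m \in \mathbb{Z}$ the matrix $\Delta(z+im\omega_T)$ is invertible. To set this up, expand $\boldsymbol{q}, \boldsymbol{f} \in C_T$ in Fourier series $\boldsymbol{q}(t) = \sum_m q_m e^{im\omega_T t}$, $\boldsymbol{f}(t) = \sum_m f_m e^{im\omega_T t}$. Applying $\boldsymbol{\Delta}(z)$ termwise and using $\int_0^h d\zeta(\theta)e^{-z\theta}e^{im\omega_T(t-\theta)} = \left(\int_0^h e^{-(z+im\omega_T)\theta}d\zeta(\theta)\right)e^{im\omega_T t}$, one sees that the $m$-th Fourier coefficient of $\boldsymbol{\Delta}(z)\boldsymbol{q}$ equals
\begin{equation*}
    im\omega_T\, q_m + z\, q_m - \left(\int_0^h e^{-(z+im\omega_T)\theta}\,d\zeta(\theta)\right) q_m = \Delta(z+im\omega_T)\, q_m,
\end{equation*}
so the equation $\boldsymbol{\Delta}(z)\boldsymbol{q} = \boldsymbol{f}$ is equivalent to $\Delta(z+im\omega_T)q_m = f_m$ for all $m \in \mathbb{Z}$. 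This immediately gives the middle set equality in \eqref{eq:charoperatorspectra}: if some $\Delta(z+im_0\omega_T)$ is singular, pick $0 \neq v \in \ker \Delta(z+im_0\omega_T)$ and $\boldsymbol{q}(t) = v e^{im_0\omega_T t}$ lies in the kernel of $\boldsymbol{\Delta}(z)$, so it is not injective; conversely, if all $\Delta(z+im\omega_T)$ are invertible we must still check that the formal solution $q_m = \Delta(z+im\omega_T)^{-1} f_m$ assembles into an element of $C_T^1(\mathbb{R},\mathbb{C}^n)$. The last equality $\{z : \det\Delta(z+im\omega_T)=0 \text{ for some } m\} = \sigma(A) + i\omega_T\mathbb{Z}$ is then just \eqref{eq:spectraDelta} applied with the substitution $z \mapsto z + im\omega_T$ and a reindexing.

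\textbf{Main obstacle.} The delicate point is the convergence/regularity argument showing that when every $\Delta(z+im\omega_T)$ is invertible, the series $\sum_m \Delta(z+im\omega_T)^{-1} f_m e^{im\omega_T t}$ genuinely defines a $C_T^1$ function and that the resulting inverse of $\boldsymbol{\Delta}(z)$ is bounded on $C_T(\mathbb{R},\mathbb{C}^n)$ (note that the Fourier series of a continuous function need not converge pointwise, so one cannot work coefficientwise naively). The clean way around this is to avoid Fourier synthesis entirely for the converse direction: instead, observe that $\boldsymbol{\Delta}(z)$ is a closed operator and $z \notin \sigma(A) + i\omega_T\mathbb{Z}$ means, via \eqref{eq:spectraDelta}, that $z$ is not in the spectrum of the generator $\mathcal{A}$ of the corresponding group/evolution family — or, more directly, construct the inverse as a convolution with an explicitly exponentially-decaying Green's kernel built from the DDE fundamental solution, which manifestly maps $C_T \to C_T^1$ boundedly. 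Alternatively, since this lemma is stated as an auxiliary result, one may cite the cited references \cite[Section 3]{Bosschaert2025} and \cite[Section 3]{Article2}, where the analogous statement for the $T$-periodic-coefficient generalization of $\boldsymbol{\Delta}(z)$ is proved in detail, and note that the autonomous case treated here follows by specialization; I would present the Fourier-space kernel computation to exhibit the equivalence and then invoke those references for the boundedness of the inverse and the closed-graph bookkeeping.
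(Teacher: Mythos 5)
Your closedness argument (closedness of $\tfrac{d}{dt}$ on $C_T^1(\mathbb{R},\mathbb{C}^n)$ plus a bounded perturbation) is fine, as are the easy parts of the spectral identity: taking $0\neq v\in\ker\Delta(z+im_0\omega_T)$ and $\boldsymbol{q}(t)=v\,e^{im_0\omega_T t}$ to kill injectivity, and the reindexing of \eqref{eq:spectraDelta} for the last equality. The genuine gap is the converse direction: when every $\Delta(z+im\omega_T)$ is invertible, i.e.\ $z\notin\sigma(A)+i\omega_T\mathbb{Z}$, you never actually prove that $\boldsymbol{\Delta}(z)$ is surjective with solutions in $C_T^1$. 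You correctly flag that naive Fourier synthesis fails without decay estimates on $\|\Delta(z+im\omega_T)^{-1}\|$ (this is precisely the warning in \cref{remark:FourierDelta1}), but none of your three escapes closes the hole. Appealing to ``$z$ is not in the spectrum of $\mathcal{A}$'' is circular in the paper's logical order: \cref{prop:spectraequal} and the resolvent formula of \cref{prop:resolvent} are themselves built on this lemma, and $\mathcal{A}$ is only closable, not closed, so its spectrum is not available beforehand. The Green's-kernel alternative is asserted (``manifestly maps $C_T\to C_T^1$ boundedly'') but not constructed. Citing \cite{Bosschaert2025} and the companion work gives closedness and Floquet-type machinery for the $T$-periodic-coefficient generalization, but the spectral characterization there is not literally the set $\sigma(A)+i\omega_T\mathbb{Z}$, so specializing it still requires exactly the argument you are missing.

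What the paper supplies at this point is a reduction you should carry out explicitly: writing $\tilde T$ for the period, set $\boldsymbol{w}(t)=e^{zt}\boldsymbol{q}(t)$, so that $\boldsymbol{\Delta}(z)\boldsymbol{q}=\boldsymbol{f}$ becomes the inhomogeneous linear DDE $\dot{\boldsymbol{w}}(t)=L\boldsymbol{w}_t+e^{zt}\boldsymbol{f}(t)$ with boundary condition $\boldsymbol{w}_{\tilde T}=e^{z\tilde T}\boldsymbol{w}_0$. The sun-star variation-of-constants formula reduces this to solving $(e^{z\tilde T}I-T(\tilde T))\eta_0=j^{-1}\int_0^{\tilde T}T^{\odot\star}(\tilde T-s)e^{zs}\boldsymbol{f}(s)r^{\odot\star}\,ds$ for the initial segment $\eta_0$; since $T$ is eventually compact, the spectral mapping theorem gives unique solvability precisely when $e^{z\tilde T}\notin e^{\tilde T\sigma(A)}$, i.e.\ $z\notin\sigma(A)+i\omega_{\tilde T}\mathbb{Z}$, and a bootstrap through the equation yields $\boldsymbol{q}\in C_{\tilde T}^1$, with boundedness of the inverse then following from the closedness you proved. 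Your Fourier computation remains valuable as the heuristic behind the statement (it is exactly \cref{remark:FourierDelta1}), but it cannot serve as the proof of surjectivity and regularity.
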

\begin{proof}
The claim on the closedness of $\boldsymbol{\Delta}(z)$ can be proven as in \cite[Lemma 3.2]{Bosschaert2025}. For convenience in what follows, we denote the period by $\tilde{T}$ instead of $T$ in this proof. To prove \eqref{eq:charoperatorspectra}, consider the equation $\boldsymbol{\Delta}(z)\boldsymbol{q} = \boldsymbol{f}$ for some given $\boldsymbol f \in C_{\tilde{T}}(\mathbb{R},\mathbb{C}^n)$ and note that this is equivalent to
\begin{equation} \label{eq:wBVP}
    \boldsymbol{\dot{w}}(t) = L \boldsymbol{w}_t + e^{zt}\boldsymbol{f}(t), \quad \boldsymbol{w}_{\tilde{T}} = e^{z\tilde{T}} \boldsymbol{w}_0,
\end{equation}
where $\boldsymbol{w}(t) = e^{zt} \boldsymbol{q}(t)$ for all $t \in \mathbb{R}$. Recalling the results from \cite[Chapter VI]{Diekmann1995}, the unique solution to this problem is of the form
\begin{equation} \label{eq:w_t}
    \boldsymbol{w}_t = T(t)\eta_0 + j^{-1} \int_0^t T^{\odot \star}(t-s)  e^{zs} \boldsymbol{f}(s)r^{\odot \star} ds, \quad \forall t \in [0,\tilde{T}],
\end{equation}
where $\eta_0 \in X$ must satisfy
\begin{equation*}
    (e^{z\tilde{T}}I - T(\tilde{T}))\eta_0 =  j^{-1} \int_0^{\tilde{T}} T^{\odot \star}(\tilde{T}-s) e^{z s} \boldsymbol{f}(s)r^{\odot \star} ds.
\end{equation*}
Since $T$ is an eventually compact $\mathcal{C}_0$-semigroup, the spectral mapping theorem \cite[Corollary IV.3.12]{Engel2000} tells us that this equation has a unique solution if and only if $\text{exp}({z\tilde{T}}) \notin \text{exp}({\tilde{T}\sigma(A)})$, which is equivalent to saying that $z \notin \sigma(A) +  i \omega_{\tilde{T}} \mathbb{Z}$. It remains to show that $\boldsymbol{q}$ is $C^1$-smooth. Let $I : [0,\tilde{T}] \to X : t \mapsto I(t)$ denote the weak$^\star$ integral appearing in \eqref{eq:w_t}. Recall from \cite[Lemma III.2.1]{Diekmann1995} that the map $I$ is continuous since $\boldsymbol{f}$ is continuous. Since $j^{-1}$ is continuous and $T$ is strongly continuous, it follows from \eqref{eq:w_t} that the map $t \mapsto \boldsymbol{w}_t$ is continuous on $[0,\tilde{T}]$. Since $L$ and $\boldsymbol{f}$ are continuous, the map $t \mapsto L \boldsymbol{w}_t + \boldsymbol{f}(t)$ is continuous on $[0,\tilde{T}]$. Hence, \eqref{eq:wBVP} tells us that $\boldsymbol{w}$, and thus $\boldsymbol{q}$, are $C^1$-smooth on $[0,\tilde{T}]$. The function $\boldsymbol{q}$ is now obtained from a trivial $\tilde{T}$-periodic extension. A straightforward computation using \eqref{eq:wBVP} shows that $\dot{\boldsymbol{q}}(\tilde{T}) = \dot{\boldsymbol{q}}(0)$, meaning that $\boldsymbol{q}$ is $C^1$-smooth on $\mathbb{R}$. This proves the result as the second equality in \eqref{eq:charoperatorspectra} is an immediate consequence of \eqref{eq:spectraDelta}.
\end{proof}

\begin{remark} \label{remark:FourierDelta1}
To gain a more intuitive understanding of the first equality appearing in \eqref{eq:charoperatorspectra}, consider the unique solution $\boldsymbol{q} \in C_T^1(\mathbb{R}, \mathbb{C}^n)$ to the equation $\boldsymbol{\Delta}(z)\boldsymbol{q} = \boldsymbol{f}$ for a given complex number $z \notin \sigma(A) + i \omega_T \mathbb{Z}$ and (in)homogeneity $\boldsymbol{f} \in C_T(\mathbb{R}, \mathbb{C}^n)$, along with their respective (formal) Fourier expansions:
\begin{equation*}
    \boldsymbol{q}(t) = \sum_{m \in \mathbb{Z}} q_m e^{i m \omega_T t}, \quad 
    \boldsymbol{f}(t) = \sum_{m \in \mathbb{Z}} f_m e^{i m \omega_T t}, \quad 
    \forall t \in \mathbb{R}.
\end{equation*}
Substituting these expansions into \eqref{eq:Delta(z)q} and recalling the definition of the characteristic matrix $\Delta(z)$ from \eqref{eq:Delta} yields
\begin{equation} \label{eq:qm}
    \Delta(z + i m \omega_T) q_m = f_m, \quad \forall m \in \mathbb{Z},
\end{equation}
which admits a unique solution $q_m = \Delta(z + i m \omega_T)^{-1} f_m$ if and only if $\det \Delta(z + i m \omega_T) \neq 0$ for all $m \in \mathbb{Z}$. However, a direct proof of \eqref{eq:charoperatorspectra} using this approach is rather challenging, as it requires sharp estimates of $\|\Delta(z + i m \omega_T)^{-1}\|$ as $m \to \pm \infty$ in order to ensure that $\boldsymbol{q}$ is continuously differentiable. \hfill $\lozenge$
\end{remark}

The spectral equality in \eqref{eq:charoperatorspectra} enables us to derive a concrete representation of the resolvent operator of $\mathcal{A}^{\odot \star}$ as stated in the following result.

\begin{proposition} \label{prop:resolvent}
If $z \notin \sigma(A) + i \omega_T \mathbb{Z}$, then the resolvent of $\mathcal{A}^{\odot \star}$ at $z$ reads
\begin{equation*}
    (v_0,v) = (zI - \mathcal{A}^{\odot \star})^{-1} (w_0,w),
\end{equation*}
where
\begin{equation*}
    v(t)(\theta) = e^{z \theta} v_0(t + \theta) + \int_\theta^0 e^{z(\theta -s)} w(t + \theta - s)(s) ds, \quad \forall t \in \mathbb{R}, \ \theta \in [-h,0],
\end{equation*}
and $v_0$ is the unique solution of the periodic linear (inhomogeneous) DDE
\begin{equation*}
    v_0 = \boldsymbol{\Delta}(z)^{-1}\bigg( t \mapsto w_0(t) + \int_0^h d \zeta(\theta)\int_{-\theta}^{0} e^{-z(\theta + s)} w(t -\theta - s)(s) ds \bigg).
\end{equation*}
\end{proposition}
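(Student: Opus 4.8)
The plan is to unravel the abstract equation \eqref{eq:solvability} into a concrete coupled system for the two components of $(v_0,v)$, solve the second (transport) component by the method of characteristics, and then feed the outcome into the first component, where the operator $\boldsymbol{\Delta}(z)$ materialises and \cref{lemma:characoperator} supplies its invertibility. Throughout one writes $\boldsymbol{\varphi}^{\odot\star}(t)=(v_0(t),v(t))$ and recalls from \eqref{eq:AsunstarDDE} that membership in $\mathcal{D}(A^{\odot\star})$ forces $v(t)\in\mathrm{Lip}([-h,0],\mathbb{C}^n)$ with $v(t)(0)=v_0(t)$.

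First I would use \eqref{eq:AsunstarDDE} and \eqref{eq:curlyAsunstar} to see that $(v_0,v)\in\mathcal{D}(\mathcal{A}^{\odot\star})$ solves \eqref{eq:solvability} if and only if
\begin{align*}
    \dot{v}_0(t) + z\,v_0(t) - L v(t) &= w_0(t), \\
    \partial_t v(t)(\theta) - \partial_\theta v(t)(\theta) + z\,v(t)(\theta) &= w(t)(\theta), \qquad \theta\in[-h,0], \\
    v(t)(0) &= v_0(t).
\end{align*}
The second line is a linear transport equation; along the characteristic $\sigma\mapsto v(t+\theta+\sigma)(-\sigma)$ it reduces to the scalar ODE $g'(\sigma)+z g(\sigma)=w(t+\theta+\sigma)(-\sigma)$. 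Integrating from $\sigma=0$, where $g(0)=v_0(t+\theta)$ by the boundary constraint, to $\sigma=-\theta\ge 0$, where $g(-\theta)=v(t)(\theta)$, and changing variables yields exactly the claimed formula
\begin{equation*}
    v(t)(\theta)=e^{z\theta}v_0(t+\theta)+\int_\theta^0 e^{z(\theta-s)}w(t+\theta-s)(s)\,ds ,
\end{equation*}
which automatically satisfies the boundary condition at $\theta=0$. Next I would insert $v(t)(-\theta)$ into $L v(t)=\int_0^h d\zeta(\theta)\,v(t)(-\theta)$, substitute into the first line, and observe that the left-hand side becomes precisely $(\boldsymbol{\Delta}(z)v_0)(t)$ by the definition \eqref{eq:Delta(z)q}; hence
\begin{equation*}
    \boldsymbol{\Delta}(z)v_0 = \Big(\, t\mapsto w_0(t)+\int_0^h d\zeta(\theta)\int_{-\theta}^0 e^{-z(\theta+s)}w(t-\theta-s)(s)\,ds \,\Big).
\end{equation*}
Since $z\notin\sigma(A)+i\omega_T\mathbb{Z}$, \cref{lemma:characoperator} guarantees that $\boldsymbol{\Delta}(z)$ is invertible, and applying $\boldsymbol{\Delta}(z)^{-1}$ produces the stated expression for $v_0$.

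It then remains to check that the pair $(v_0,v)$ thus constructed genuinely lies in $\mathcal{D}(\mathcal{A}^{\odot\star})$ and solves \eqref{eq:solvability}, and that it is the unique such pair. Regularity propagates along the construction: $v_0\in C_T^1(\mathbb{R},\mathbb{C}^n)$ because it is the image under $\boldsymbol{\Delta}(z)^{-1}$ of a function in $C_T(\mathbb{R},\mathbb{C}^n)$; the explicit formula then makes $\theta\mapsto v(t)(\theta)$ Lipschitz with $v(t)(0)=v_0(t)$, and $t\mapsto v(t)$ a $C^1$, $T$-periodic map into $L^\infty([-h,0],\mathbb{C}^n)$, so that $(v_0,v)\in C_T^1(\mathbb{R},X^{\odot\star})$ with $(v_0(t),v(t))\in\mathcal{D}(A^{\odot\star})$ for every $t$. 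Reversing the computation confirms $(zI-\mathcal{A}^{\odot\star})(v_0,v)=(w_0,w)$, and since every step of the derivation is an equivalence the solution is unique; equivalently, $zI-\mathcal{A}^{\odot\star}$ is injective with full range and the exhibited bounded operator is its inverse, so $z\in\rho(\mathcal{A}^{\odot\star})$.

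The step I expect to be the main obstacle is making the transport argument rigorous at the $\odot\star$-level: one must justify differentiating the $L^\infty([-h,0],\mathbb{C}^n)$-valued map $t\mapsto v(t)$, interpret the pointwise (``diagonal'') evaluation $w(t)(\theta)$ of the $L^\infty$-datum in a way consistent with the weak$^\star$ integrals in \eqref{eq:curlyAsunstar}, and verify continuity in $t$ of all the integral terms so that the result genuinely belongs to $C_T(\mathbb{R},X^{\odot\star})$. This is exactly the autonomous-linear-part analogue of the resolvent computations carried out for the limit-cycle operator in \cite{Article2,Bosschaert2025}, so I would borrow those estimates essentially verbatim and only adjust the notation.
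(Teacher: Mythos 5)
Your proposal is correct and follows essentially the route the paper itself intends: the paper omits the proof of \cref{prop:resolvent} by referring to the analogous resolvent computation in \cite[Proposition 3.10]{Bosschaert2025}, which proceeds exactly as you do — solving the second ($\theta$-transport) component along characteristics with the boundary condition $v(t)(0)=v_0(t)$, substituting into the first component so that $\boldsymbol{\Delta}(z)$ from \eqref{eq:Delta(z)q} appears, and invoking the invertibility supplied by \cref{lemma:characoperator}. The technical point you flag (interpreting the pointwise manipulations of the $L^\infty$-valued data and the $t$-differentiability consistently with \eqref{eq:curlyAsunstar}) is precisely what is handled in the cited reference, so deferring to those estimates is legitimate and matches the paper's own treatment.
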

\begin{proof}
The proof is similar to that of \cite[Proposition 3.10]{Bosschaert2025} and therefore omitted.
\end{proof}
To identify the spectrum of $\mathcal{A}, \mathcal{A}^\star, \mathcal{A}^\odot$ and $\mathcal{A}^{\odot \star}$, we proceed along the same lines of \cite[Section 3.2]{Bosschaert2025} and arrive at the following result.

\begin{proposition} \label{prop:spectraequal}
The spectra of $\mathcal{A}, \mathcal{A}^\star, \mathcal{A}^\odot$ and $\mathcal{A}^{\odot \star}$ are all equal, consist of point spectrum only, and there holds
\begin{equation*}
    \sigma(\mathcal{A}^{\odot \star}) = \{ z \in \mathbb{C} : \boldsymbol{\Delta}(z) \mbox{ is not invertible} \} =\sigma(A) + i \omega_T \mathbb{Z}.  
\end{equation*}
\end{proposition}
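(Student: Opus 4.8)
The plan is to reduce the statement about the four operators $\mathcal{A}$, $\mathcal{A}^\star$, $\mathcal{A}^\odot$, $\mathcal{A}^{\odot\star}$ to what is already available for $\mathcal{A}^{\odot\star}$ from \cref{lemma:characoperator} and \cref{prop:resolvent}, and to a duality argument relating an operator to its adjoint. First I would recall, following \cite[Section 3.2]{Bosschaert2025}, the general principle that for a closable densely defined operator $J$ on a Banach space, the spectrum of $J$ (in the sense of \cref{def:spectrumclosable}) coincides with the spectrum of its closure $\overline{J}$, and that $\sigma(\overline{J}) = \sigma(\overline{J}^{\,\prime})$ when the relevant duality pairing is nondegenerate and the adjoint is taken with respect to it. Since the excerpt already records that $\mathcal{A}^{\odot\star}$ is the (unique) adjoint of $\mathcal{A}^\odot$ with respect to the nondegenerate pairing $\langle\cdot,\cdot\rangle_T$ of \eqref{eq:pairingT}, and analogously $\mathcal{A}^\star$ is the adjoint of $\mathcal{A}$ with respect to the corresponding pairing on $C_T(\mathbb{R},X^\star)\times C_T(\mathbb{R},X)$, this immediately yields $\sigma(\mathcal{A}^\odot)=\sigma(\mathcal{A}^{\odot\star})$ and $\sigma(\mathcal{A})=\sigma(\mathcal{A}^\star)$.

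Next I would link the two pairs. Here the natural bridge is the embedding $\iota : C_T(\mathbb{R},X)\to C_T(\mathbb{R},X^{\odot\star})$, which by $\odot$-reflexivity of $X$ with respect to $T_0$ identifies $C_T(\mathbb{R},X)$ isometrically with $C_T(\mathbb{R},X^{\odot\odot})$, i.e. with the range of $\iota$. Just as in the autonomous sun-star calculus, $\mathcal{A}$ is precisely the part of $\mathcal{A}^{\odot\star}$ in $\iota(C_T(\mathbb{R},X))$, so for $z$ in the resolvent set of $\mathcal{A}^{\odot\star}$ the resolvent formula in \cref{prop:resolvent} maps $\iota(C_T(\mathbb{R},X))$ into itself (one checks that if $(w_0,w)$ comes from a genuine $X$-valued continuous function then the $v$ produced by the formula has its second component continuous in $\theta$, hence lies in $X^{\odot\odot}$, because the integral terms are continuous and the boundary condition $v(t)(0)=v_0(t)$ is built in). This shows $\rho(\mathcal{A}^{\odot\star})\subseteq\rho(\mathcal{A})$. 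For the reverse inclusion one uses that, since the $\odot$-reflexive embedding commutes with the resolvents on their common domain, a bounded inverse on the small space together with the explicit $\odot\star$-formula gives a bounded inverse on the big space; alternatively invoke the standard fact that an operator and the part of it in a closed invariant subspace on which the semigroup/group is strongly continuous share their spectrum outside the essential part, exactly as in \cite[Section 3]{Diekmann1995} and \cite[Section 3.2]{Bosschaert2025}. Either way $\sigma(\mathcal{A})=\sigma(\mathcal{A}^{\odot\star})$, and combined with the two adjoint identities all four spectra agree.

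Finally I would identify the common spectrum. The "point spectrum only" claim follows because $\mathcal{A}^{\odot\star}$ (and hence, by the identifications above, the other three) has compact resolvent at any regular $z$: in \cref{prop:resolvent} the map $v_0=\boldsymbol{\Delta}(z)^{-1}(\dots)$ gains a derivative (by \cref{lemma:characoperator}, whose proof shows $\boldsymbol{q}\in C_T^1$), and the recovery of $v$ from $(v_0,w)$ is an integral smoothing, so $(zI-\mathcal{A}^{\odot\star})^{-1}$ factors through an embedding of a $C^1$-type space into $C_T(\mathbb{R},X^{\odot\star})$ which is compact; thus the spectrum is a discrete set of eigenvalues. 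The exact location is then read off directly from \cref{lemma:characoperator}: $z$ is a regular point of $\mathcal{A}^{\odot\star}$ precisely when $\boldsymbol{\Delta}(z)$ is invertible, i.e. when $\det\Delta(z+im\omega_T)\neq 0$ for all $m\in\mathbb{Z}$, which by \eqref{eq:spectraDelta} says $z\notin\sigma(A)+i\omega_T\mathbb{Z}$. I expect the main obstacle to be the careful verification of the reverse inclusion $\rho(\mathcal{A})\subseteq\rho(\mathcal{A}^{\odot\star})$ — that regularity on the small space $C_T(\mathbb{R},X)$ forces regularity on $C_T(\mathbb{R},X^{\odot\star})$ — since this is exactly where $\odot$-reflexivity and the additive structure of $A^{\odot\star}=A_0^{\odot\star}+Bj^{-1}$ must be used; everything else is bookkeeping that parallels \cite[Section 3.2]{Bosschaert2025}, which is why I would state the parallel and refer there for the routine parts.
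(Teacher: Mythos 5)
Your overall strategy (reduce everything to $\mathcal{A}^{\odot\star}$, then quote \cref{lemma:characoperator}) is reasonable, but two of your load-bearing steps fail as stated. The most concrete problem is the compact-resolvent argument for ``point spectrum only''. The resolvent of $\mathcal{A}^{\odot\star}$ is \emph{not} compact on $C_T(\mathbb{R},X^{\odot\star})$: only the first component $v_0=\boldsymbol{\Delta}(z)^{-1}(\cdots)$ gains a derivative, while the second component in \cref{prop:resolvent} reproduces $w$ along the diagonal without any smoothing. Indeed, take $w_0=0$ and $w_k(t)=\bigl(\theta\mapsto e^{ik\omega_T(t+\theta)}c\bigr)$ for a fixed vector $c$; then $w_k(t+\theta-s)(s)=e^{ik\omega_T(t+\theta)}c$, so the integral term equals $e^{ik\omega_T(t+\theta)}\tfrac{1-e^{z\theta}}{z}\,c$, a bounded sequence in $C_T(\mathbb{R},L^\infty([-h,0],\mathbb{C}^n))$ with no convergent subsequence, whereas the remaining term $e^{z\theta}v_0(t+\theta)$ stays in a norm-compact set (the $v_0^{(k)}$ are bounded in $C_T^1$). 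Hence the image of the unit ball under $(zI-\mathcal{A}^{\odot\star})^{-1}$ is not precompact, and in fact even the embedding $C_T^1(\mathbb{R},X^{\odot\star})\hookrightarrow C_T(\mathbb{R},X^{\odot\star})$ is not compact because the fiber $X^{\odot\star}\cong\mathbb{R}^n\times L^\infty$ is infinite-dimensional. So discreteness and the eigenvalue character of the spectrum cannot be obtained this way. The intended (and much more elementary) argument is the one the paper takes over from \cite{Bosschaert2025}: if $z\in\sigma(A)+i\omega_T\mathbb{Z}$, choose $m$ with $\det\Delta(z+im\omega_T)=0$ and a null vector $q_m$, set $\boldsymbol{q}(t)=e^{im\omega_T t}q_m$, which is a null function of $\boldsymbol{\Delta}(z)$ by \eqref{eq:Deltapmqm}, and the explicit formulas of \cref{prop:curlyeigenfunctions} (and their $\odot$-, $\star$- and $\odot\star$-analogues) turn it into a genuine eigenfunction of each of the four operators; combined with the explicit resolvents for $z\notin\sigma(A)+i\omega_T\mathbb{Z}$ (\cref{prop:resolvent}) this yields simultaneously that the spectrum is pure point and equals $\sigma(A)+i\omega_T\mathbb{Z}$.

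The second gap is in the reduction itself. The ``general principle'' $\sigma(J)=\sigma(J^\star)$ that you invoke is a theorem for the genuine Banach-space adjoint on the genuine dual; here $\langle\cdot,\cdot\rangle_T$ is only a nondegenerate bilinear pairing between spaces neither of which is the dual of the other ($C_T(\mathbb{R},X^\odot)$ is not the dual of $C_T(\mathbb{R},X^{\odot\star})$), and the operators are merely closable with the spectrum understood as in \cref{def:spectrumclosable}; adjointness with respect to such a pairing transfers only partial information (injectivity versus dense range), not equality of spectra. Similarly, your passage from $\mathcal{A}^{\odot\star}$ to $\mathcal{A}$ via ``part of the operator in $\iota(C_T(\mathbb{R},X))$'' is plausible in one direction but the reverse inclusion $\rho(\mathcal{A})\subseteq\rho(\mathcal{A}^{\odot\star})$ — which you yourself flag as the main obstacle — is left unproved, and the ``standard fact'' you appeal to is not available for these non-closed operators. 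The route followed in the paper (along the lines of \cite[Section 3.2]{Bosschaert2025}) avoids both issues: on each of the four spaces one writes down the analogue of the resolvent of \cref{prop:resolvent} when $\boldsymbol{\Delta}(z)$ (respectively $\boldsymbol{\Delta}^\star(z)$, which has the same non-invertibility set) is invertible, and the explicit eigenfunctions of \cref{prop:curlyeigenfunctions} when it is not, so all four spectra are identified directly with $\{z:\boldsymbol{\Delta}(z)\text{ not invertible}\}=\sigma(A)+i\omega_T\mathbb{Z}$ by \cref{lemma:characoperator}, with no inclusion or duality argument between the four spaces needed.
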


Solvability of \eqref{eq:solvability} naturally leads to a distinction between two cases, depending on whether or not the complex number $z$ lies in the set $\sigma(A) + i \omega_T \mathbb{Z}$. If $z \notin \sigma(A) + i \omega_T \mathbb{Z}$, then we have shown in \Cref{prop:resolvent} that \eqref{eq:solvability} admits a unique solution $(v_0,v)$ in the domain of $\mathcal{A}^{\odot \star}$. As will become clear in \Cref{sec:normalization}, an explicit representation of the resolvent operator is required for a specific right-hand side $(w_0, w)$. 

\begin{corollary} \label{cor:resolvent} 
Suppose that $z \notin \sigma(A) + i \omega_T \mathbb{Z}$. If $(w_0,w) = (t \mapsto w_0(t)r^{\odot \star})$, then the unique solution $(v_0,v)$ of \eqref{eq:solvability} reads
\begin{equation*}
    v(t)(\theta) = e^{z \theta} v_0(t + \theta), \quad \forall t \in \mathbb{R}, \ \theta \in [-h,0], \quad v_0 = \boldsymbol{\Delta}^{-1}(z)w_0.
    \end{equation*}
Furthermore, if $w_0^m$ denote the Fourier coefficients of $w_0$, then $v_0$ is given by
\begin{equation} \label{eq:v0w0Fourier}
    v_0(t) = \sum_{m \in \mathbb{Z}} v_0^m e^{im \omega_T t}, \quad v_0^m \coloneqq \Delta(z+ i m \omega_T)^{-1} w_0^m.
\end{equation}
\end{corollary}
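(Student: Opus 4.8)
The plan is to specialize the general resolvent formula from \cref{prop:resolvent} to the particular right-hand side $(w_0,w) = (t \mapsto w_0(t)r^{\odot \star})$, which in the coordinates of \eqref{eq:spacesDDEs} means the $\mathbb{R}^n$-component is $w_0$ and the $L^\infty$-component $w$ vanishes identically (since $r^{\odot\star} = (e_i,0)$ has zero second component). First I would substitute $w \equiv 0$ into the two displayed formulas of \cref{prop:resolvent}: the integral term $\int_\theta^0 e^{z(\theta-s)} w(t+\theta-s)(s)\, ds$ in the expression for $v(t)(\theta)$ drops out, leaving $v(t)(\theta) = e^{z\theta} v_0(t+\theta)$, and similarly the double integral $\int_0^h d\zeta(\theta) \int_{-\theta}^0 e^{-z(\theta+s)} w(t-\theta-s)(s)\, ds$ in the equation for $v_0$ vanishes, leaving $v_0 = \boldsymbol{\Delta}(z)^{-1} w_0$. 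Since $z \notin \sigma(A) + i\omega_T\mathbb{Z}$, \cref{lemma:characoperator} (or \cref{prop:spectraequal}) guarantees $\boldsymbol{\Delta}(z)$ is invertible, so $v_0$ is well-defined and unique, and uniqueness of $(v_0,v)$ then follows from \cref{prop:resolvent}.

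The second part, the Fourier representation \eqref{eq:v0w0Fourier}, follows from the Fourier-space reduction already laid out in \cref{remark:FourierDelta1}: writing $w_0(t) = \sum_m w_0^m e^{im\omega_T t}$ and $v_0(t) = \sum_m v_0^m e^{im\omega_T t}$ and applying \eqref{eq:qm} with $\boldsymbol{f} = w_0$, $\boldsymbol{q} = v_0$, we get $\Delta(z + im\omega_T) v_0^m = w_0^m$ for each $m \in \mathbb{Z}$, and since $z \notin \sigma(A) + i\omega_T\mathbb{Z}$ means $\det\Delta(z + im\omega_T) \neq 0$ for every $m$ by \eqref{eq:spectraDelta}, each Fourier coefficient is uniquely given by $v_0^m = \Delta(z + im\omega_T)^{-1} w_0^m$. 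One should remark that \cref{lemma:characoperator} already guarantees $v_0 \in C_T^1(\mathbb{R},\mathbb{C}^n)$, so the formal Fourier series manipulation is legitimated a posteriori (the series converges to the genuine $C^1$ solution whose existence was established by the variation-of-constants argument in the proof of \cref{lemma:characoperator}), which sidesteps the delicate estimates on $\|\Delta(z+im\omega_T)^{-1}\|$ mentioned in \cref{remark:FourierDelta1}.

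In truth this corollary is almost entirely bookkeeping: the analytic content is already contained in \cref{prop:resolvent} and \cref{lemma:characoperator}, and the proof amounts to zeroing out the terms that involve the now-vanishing second component $w$. The only place requiring the slightest care is the verification that $r^{\odot\star}$-type forcing indeed has vanishing $L^\infty$-part under the isometric isomorphism \eqref{eq:spacesDDEs} — i.e. that the notation $(w_0,w) = (t\mapsto w_0(t)r^{\odot\star})$ is being read correctly as $w = 0$ — but this is immediate from $r_i^{\odot\star} = (e_i,0)$. I would therefore expect no genuine obstacle; the "hard part," such as it is, is simply stating clearly which terms drop and citing the right earlier results, so the write-up can be kept to a couple of lines, as the authors indeed do by reference to \cref{prop:resolvent} and \cref{remark:FourierDelta1}.
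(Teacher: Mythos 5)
Your proposal is correct and follows the same route as the paper: the first display is obtained by specializing \cref{prop:resolvent} to the case $w = 0$ (so the integral terms vanish), and the Fourier representation is obtained exactly as in \cref{remark:FourierDelta1}, with the $C^1$-regularity from \cref{lemma:characoperator} justifying the coefficient-wise identity. No gaps; your write-up is just a more explicit version of the paper's two-line proof.
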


\begin{proof}
The first part of the proof follows directly from \Cref{prop:resolvent} while the explicit formulas from second part of the proof can be obtained as in \Cref{remark:FourierDelta1}.
\end{proof}

Let us now turn our attention to the solvability of \eqref{eq:solvability} when $z = \lambda \in \sigma(A) + i \omega_T \mathbb{Z}$. Then \eqref{eq:solvability} need not to have a solution. To find a solution, we will use a Fredholm alternative that is suited for periodic linear operator equations of the form \eqref{eq:solvability}. Before we can prove that such a solvability condition holds, we first need to find an adjoint $\boldsymbol{\Delta}^\star(z) \coloneqq \boldsymbol{\Delta}(z)^\star$ of $\boldsymbol{\Delta}(z)$, with respect to the (complexified) bilinear \emph{pairing} $\langle \cdot, \cdot \rangle_T : C_T(\mathbb{R}, \mathbb{C}^{n \star}) \times C_T(\mathbb{R}, \mathbb{C}^n) \to \mathbb{C}$ defined by
\begin{equation} \label{eq:pairingT2}
    \langle \boldsymbol p, \boldsymbol q \rangle_T \coloneqq \frac{1}{T}\int_0^T \boldsymbol p(t) \boldsymbol q(t) dt, \quad \forall (\boldsymbol p,\boldsymbol q) \in C_T(\mathbb{R}, \mathbb{C}^{n \star}) \times C_T(\mathbb{R}, \mathbb{C}^n).
\end{equation}
The following result states that such an adjoint exists and inherits the same properties as $\boldsymbol{\Delta}(z)$. The proof of the following result is similar to that of \cite[Lemma 3.5]{Bosschaert2025} and therefore omitted.

\begin{proposition} \label{prop:adjoint charac}
For any $z \in \mathbb{{C}}$, the operator $\boldsymbol{\Delta}^\star(z) : \mathcal{D}(\boldsymbol{\Delta}^\star(z)) \coloneqq C_T^1(\mathbb{R},\mathbb{C}^{n \star}) \subseteq C_T(\mathbb{R},\mathbb{C}^{n \star}) \to C_T(\mathbb{{R}},\mathbb{C}^{n \star})$ defined by
\begin{equation} \label{eq:Delta(z)starp}
    (\boldsymbol{\Delta}^\star(z) \boldsymbol p)(t) \coloneqq -\dot{\boldsymbol p}(t) + z \boldsymbol p(t) - \int_0^h   \boldsymbol p(t + \theta) e^{-z \theta} d \zeta(\theta), \quad \forall \textbf{p} \in C_T^1(\mathbb{R},\mathbb{C}^{n \star}), 
\end{equation}
is the unique linear operator satisfying
\begin{equation*}
    \langle \boldsymbol{\Delta}^\star(z) \boldsymbol p, \boldsymbol q \rangle_T = \langle \boldsymbol p,\boldsymbol{\Delta}(z) \boldsymbol q \rangle_T, \quad \forall (\boldsymbol p, \boldsymbol q) \in C_T^1(\mathbb{R},\mathbb{C}^{n \star}) \times C_T^1(\mathbb{R},\mathbb{C}^n).
\end{equation*}
Moreover, $\boldsymbol{\Delta}^\star(z)$ is closed and there holds
\begin{equation} \label{eq:boldDeltaDeltastar}
    \{ z \in \mathbb{C} : \boldsymbol{\Delta}^\star(z) \mbox{ is not invertible} \} = \{ z \in \mathbb{C} : \boldsymbol{\Delta}(z) \mbox{ is not invertible} \}.
\end{equation}
\end{proposition}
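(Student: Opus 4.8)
The plan is to establish the adjoint relation first and then deduce the two remaining claims (closedness and equality of non-invertibility sets) as consequences. To verify that $\boldsymbol{\Delta}^\star(z)$ as defined in \eqref{eq:Delta(z)starp} is indeed the adjoint, I would take arbitrary $\boldsymbol{p} \in C_T^1(\mathbb{R}, \mathbb{C}^{n\star})$ and $\boldsymbol{q} \in C_T^1(\mathbb{R}, \mathbb{C}^n)$ and directly compute $\langle \boldsymbol{p}, \boldsymbol{\Delta}(z)\boldsymbol{q} \rangle_T$ using the definition \eqref{eq:Delta(z)q}. This splits into three pieces: the $\dot{\boldsymbol{q}}$-term is handled by integration by parts over $[0,T]$, where the boundary terms vanish because both $\boldsymbol{p}$ and $\boldsymbol{q}$ are $T$-periodic; the $z\boldsymbol{q}(t)$-term is immediate; and the convolution term $\int_0^h d\zeta(\theta)\, e^{-z\theta} \boldsymbol{q}(t-\theta)$ requires a substitution $t \mapsto t + \theta$ inside the time integral, again exploiting $T$-periodicity to keep the integration interval $[0,T]$ and using Fubini to interchange the $t$-integral with the $\zeta$-integral. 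Collecting the terms reproduces exactly $\langle \boldsymbol{\Delta}^\star(z)\boldsymbol{p}, \boldsymbol{q} \rangle_T$ with $\boldsymbol{\Delta}^\star(z)$ as in \eqref{eq:Delta(z)starp}. Uniqueness of the adjoint follows from nondegeneracy of the pairing $\langle \cdot, \cdot \rangle_T$ on the dense subspaces $C_T^1$.

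For closedness of $\boldsymbol{\Delta}^\star(z)$, I would mirror the argument used for $\boldsymbol{\Delta}(z)$ in \cref{lemma:characoperator} (which itself cites \cite[Lemma 3.2]{Bosschaert2025}): suppose $\boldsymbol{p}_n \to \boldsymbol{p}$ in $C_T(\mathbb{R}, \mathbb{C}^{n\star})$ and $\boldsymbol{\Delta}^\star(z)\boldsymbol{p}_n \to \boldsymbol{g}$. Since the only unbounded operation in $\boldsymbol{\Delta}^\star(z)$ is $-\dot{\boldsymbol{p}}$, and the other two terms are bounded and converge, we get $\dot{\boldsymbol{p}}_n$ converging uniformly; combined with $\boldsymbol{p}_n \to \boldsymbol{p}$ uniformly this forces $\boldsymbol{p} \in C_T^1$ with $\dot{\boldsymbol{p}}_n \to \dot{\boldsymbol{p}}$, hence $\boldsymbol{\Delta}^\star(z)\boldsymbol{p} = \boldsymbol{g}$. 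Alternatively, one can invoke the general fact that the adjoint of a densely defined closable operator with respect to a nondegenerate bilinear pairing is automatically closed.

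Finally, to prove \eqref{eq:boldDeltaDeltastar}, the cleanest route is via Fourier series, exactly as in \cref{remark:FourierDelta1}: substituting $\boldsymbol{p}(t) = \sum_{m} p_m e^{im\omega_T t}$ into \eqref{eq:Delta(z)starp} shows that $\boldsymbol{\Delta}^\star(z)\boldsymbol{p} = \boldsymbol{g}$ decouples into $p_m \Delta(z + im\omega_T) = g_m$ for all $m \in \mathbb{Z}$ — note the left-multiplication, reflecting that $p_m \in \mathbb{C}^{n\star}$ is a row vector acting on the matrix $\Delta(z + im\omega_T)$. Thus $\boldsymbol{\Delta}^\star(z)$ is invertible iff $\Delta(z + im\omega_T)$ is invertible for every $m$ (with suitable uniform control of $\|\Delta(z+im\omega_T)^{-1}\|$ as $m \to \pm\infty$, which holds because $\Delta(w)^{-1} = w^{-1}I + \mathcal{O}(|w|^{-2})$ for $|\Im w|$ large), and since $\det \Delta(w) = \det \Delta(w)^\top$ this is precisely the same condition characterizing invertibility of $\boldsymbol{\Delta}(z)$ in \eqref{eq:charoperatorspectra}. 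Rather than redoing the delicate Fourier estimate, I would simply cite \cref{lemma:characoperator} and observe that the non-invertibility set for $\boldsymbol{\Delta}^\star(z)$ is governed by the same scalar condition $\det\Delta(z + im\omega_T) = 0$, giving \eqref{eq:boldDeltaDeltastar} immediately.

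\emph{The main obstacle} I anticipate is not conceptual but bookkeeping: getting the time-shift substitution in the convolution term exactly right, so that the kernel $d\zeta(\theta)$ appears to the \emph{right} of $\boldsymbol{p}(t+\theta)$ in \eqref{eq:Delta(z)starp} (matching matrix-times-row-vector conventions), and making sure the $T$-periodicity is genuinely available at each step — in particular that $\boldsymbol{q}(t - \theta)$ remains well-defined after shifting, which is fine since everything is defined on all of $\mathbb{R}$. The transcription from $C_T$ to Fourier space for \eqref{eq:boldDeltaDeltastar} is routine given \cref{remark:FourierDelta1}, so the bulk of the genuine work is the integration-by-parts-and-Fubini computation establishing the adjoint identity; since this parallels \cite[Lemma 3.5]{Bosschaert2025} almost verbatim, I would present only the key manipulations and defer the rest to that reference, consistent with the paper's stated policy of not reproducing arguments that carry over with minimal change.
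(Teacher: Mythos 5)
Your verification of the adjoint identity (integration by parts in the $\dot{\boldsymbol q}$-term, Fubini plus the $T$-periodic shift $t\mapsto t+\theta$ in the convolution term, with the kernel landing to the right of $\boldsymbol p(t+\theta)$) and your direct closedness argument are correct, and they follow the route the paper points to via \cite[Lemma 3.5]{Bosschaert2025}. Two small remarks there: the Fourier decoupling reads $p_m\Delta(z-im\omega_T)=g_m$, not $z+im\omega_T$ (compare \eqref{eq:pm}); this is harmless for the set equality since $m$ runs over all of $\mathbb{Z}$, but the sign matters elsewhere (e.g.\ in \eqref{eq:Deltapmqm} and \eqref{eq:pairingidentity}). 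Also, the ``alternative'' closedness argument should be dropped: $\langle\cdot,\cdot\rangle_T$ is not the canonical Banach-space duality between $C_T(\mathbb{R},\mathbb{C}^{n\star})$ and $C_T(\mathbb{R},\mathbb{C}^n)$, and the declared domain $C_T^1$ need not a priori coincide with the maximal pairing-adjoint domain, so ``adjoints are automatically closed'' is not available off the shelf; your first, elementary argument is the right one.

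The genuine gap is in \eqref{eq:boldDeltaDeltastar}. Citing \cref{lemma:characoperator} only gives the non-invertibility set of $\boldsymbol\Delta(z)$; it says nothing about $\boldsymbol\Delta^\star(z)$, and since the pairing is not a genuine duality you cannot transfer invertibility to the adjoint by abstract functional analysis. The Fourier decoupling does give the easy half: if $z\in\sigma(A)+i\omega_T\mathbb{Z}$, a left null vector of a singular $\Delta(z-im\omega_T)$ yields a single-mode $C^1$ null function, so $\boldsymbol\Delta^\star(z)$ is not injective, and injectivity for $z$ outside this set follows likewise. What is missing is surjectivity of $\boldsymbol\Delta^\star(z)$ for $z\notin\sigma(A)+i\omega_T\mathbb{Z}$ (bijectivity plus closedness then gives a bounded inverse): given $\boldsymbol g\in C_T(\mathbb{R},\mathbb{C}^{n\star})$ you must produce a \emph{$C^1$} periodic solution, which is exactly the regularity issue that \cref{remark:FourierDelta1} calls ``rather challenging'' --- yet you explicitly decline to carry out the Fourier estimate and fall back on a citation that does not cover the adjoint operator. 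To close the gap, either complete the Fourier route (the bound $\|\Delta(z-im\omega_T)^{-1}\|=\mathcal{O}(1/|m|)$ gives an absolutely convergent series, hence $\boldsymbol p\in C_T$, after which $C^1$-regularity must still be bootstrapped from the equation interpreted in the weak sense), or mirror the proof of \cref{lemma:characoperator}: substituting $\boldsymbol w(t)=e^{-zt}\boldsymbol p(t)$ turns $\boldsymbol\Delta^\star(z)\boldsymbol p=\boldsymbol g$ into a transposed (adjoint) delay equation with multiplicative boundary condition, which is solved via the corresponding (adjoint) semigroup and the same spectral mapping argument; this is the content of \cite[Lemma 3.5]{Bosschaert2025}, which the paper invokes and then omits the details.
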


\begin{remark}
To gain a more intuitive understanding of \eqref{eq:boldDeltaDeltastar}, consider the unique solution $\boldsymbol{p} \in C_T^1(\mathbb{R}, \mathbb{C}^{n\star})$ to the equation $\boldsymbol{\Delta}^\star(z) \boldsymbol{p} = \boldsymbol{f}$ for a given $z \notin \sigma(A) + i \omega_T \mathbb{Z}$ and $\boldsymbol{f} \in C_T(\mathbb{R}, \mathbb{C}^{n \star})$, along with their respective Fourier expansions. Substituting these expansions into \eqref{eq:Delta(z)starp} and recalling the definition of the characteristic matrix $\Delta(z)$ from \eqref{eq:Delta} yields
\begin{equation} \label{eq:pm}
    p_m\Delta(z-im \omega_T) = f_m, \quad \forall m \in \mathbb{Z},
\end{equation}
where $p_m$ and $f_m$ denote the Fourier coefficients of $\boldsymbol p$ and $\boldsymbol f$, respectively. This equation admits a unique solution $p_m = f_m \Delta(z-im \omega_T)^{-1}$ if and only if $\det \Delta(z-im\omega_T) \neq 0$ for all $m \in \mathbb{Z}$. \hfill $\lozenge$    
\end{remark}

To apply in \Cref{sec:normalization} the upcoming Fredholm alternative suited for periodic linear operator equations of the form \eqref{eq:solvability}, it is convenient to have an explicit representation available of the (adjoint) (generalized) eigenfunctions of $\mathcal{A}$ and $\mathcal{A}^\star$ for which we already recall \Cref{prop:spectraequal}. The following result provides such an explicit representation in the case of a \emph{simple eigenvalue} $\lambda$ of $\mathcal{A}$, meaning that $\lambda = \lambda_A + i m\omega_T$ for some simple eigenvalue $\lambda_A$ of $A$ and $m \in \mathbb{Z}$. One can interpret the following result as a natural extension of \Cref{prop:eigenfunctions} in the $T$-periodic setting.

\begin{proposition} \label{prop:curlyeigenfunctions}
Let $\lambda \in \sigma(A) + i \omega_T \mathbb{Z}$ be simple. If $\boldsymbol q$ is a null function of $\boldsymbol\Delta(\lambda)$ then $\boldsymbol \varphi$ given by
\begin{equation*}
    \boldsymbol\varphi(t)(\theta) = e^{\lambda \theta} \boldsymbol q(t+\theta), \quad \forall t \in \mathbb{R}, \ \theta \in [-h,0],
\end{equation*}
is an eigenfunction of $\mathcal{A}$ corresponding to $\lambda$. If $\boldsymbol p$ is a null function of $\boldsymbol \Delta^\star(\lambda)$ then $\boldsymbol \varphi^\odot$ given by
\begin{equation*}
    \boldsymbol \varphi^\odot(t) = \bigg( \boldsymbol p(t), \theta \mapsto \int_\theta^h e^{\lambda(\theta-s)} \boldsymbol p(t+s-\theta) d\zeta(s)\bigg), \quad \forall t \in \mathbb{R},
\end{equation*}
is an adjoint eigenfunction of $\mathcal{A}^\star$ corresponding to $\lambda$. Furthermore, if $q_m$ and $p_m$ denote the Fourier coefficients of $\boldsymbol q$ and $ \boldsymbol p$, then
\begin{equation} \label{eq:Deltapmqm}
    \Delta(\lambda+im\omega_T)q_m = 0, \quad p_m \Delta(\lambda-im\omega_T) = 0, \quad \forall m \in \mathbb{Z},
\end{equation}
and the following identities hold
\begin{equation} \label{eq:pairingidentity}
    \langle \boldsymbol\varphi^\odot,\boldsymbol\varphi \rangle_T = \langle \boldsymbol p, \boldsymbol\Delta'(\lambda) \boldsymbol q \rangle_T= \sum_{m \in \mathbb{Z}} p_m \Delta'(\lambda - im \omega_T)q_{-m} \neq 0.
\end{equation}
\end{proposition}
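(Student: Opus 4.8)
The plan is to verify the two eigenfunction claims by a short direct computation, to read \eqref{eq:Deltapmqm} off the Fourier descriptions of $\boldsymbol{\Delta}(\lambda)$ and $\boldsymbol{\Delta}^\star(\lambda)$ recorded in \cref{remark:FourierDelta1} and in the remark following \cref{prop:adjoint charac}, and then to obtain \eqref{eq:pairingidentity} by decomposing $\boldsymbol{\varphi}$ and $\boldsymbol{\varphi}^\odot$ into Fourier modes and invoking \cref{prop:eigenfunctions} one mode at a time. Recall that $\mathcal{A}$ acts by $(\mathcal{A}\boldsymbol{\varphi})(t) = A\boldsymbol{\varphi}(t) - \dot{\boldsymbol{\varphi}}(t)$ with $A\varphi = \varphi'$ on $\mathcal{D}(A) = \{\varphi \in X : \varphi' \in X,\ \varphi'(0) = L\varphi\}$. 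For $\boldsymbol{\varphi}(t)(\theta) = e^{\lambda\theta}\boldsymbol{q}(t+\theta)$ with $\boldsymbol{q} \in \mathcal{D}(\boldsymbol{\Delta}(\lambda)) = C_T^1(\mathbb{R},\mathbb{C}^n)$ a nonzero null function, the map $t \mapsto \boldsymbol{\varphi}(t)$ is $T$-periodic and $C^1$ into $X$, one has $\partial_\theta\boldsymbol{\varphi}(t)(\theta) = e^{\lambda\theta}\big(\lambda\boldsymbol{q}(t+\theta) + \dot{\boldsymbol{q}}(t+\theta)\big) \in X$, and evaluating at $\theta = 0$ and comparing with \eqref{eq:Delta(z)q} shows that the constraint $\partial_\theta\boldsymbol{\varphi}(t)(0) = L\boldsymbol{\varphi}(t)$ is exactly $(\boldsymbol{\Delta}(\lambda)\boldsymbol{q})(t) = 0$; hence $\boldsymbol{\varphi} \in \mathcal{D}(\mathcal{A})$ and $(\mathcal{A}\boldsymbol{\varphi})(t)(\theta) = \partial_\theta\boldsymbol{\varphi}(t)(\theta) - \partial_t\boldsymbol{\varphi}(t)(\theta) = \lambda e^{\lambda\theta}\boldsymbol{q}(t+\theta) = \lambda\boldsymbol{\varphi}(t)(\theta)$, with $\boldsymbol{\varphi}\neq 0$ since $\boldsymbol{\varphi}(t)(0) = \boldsymbol{q}(t)$.

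For $\boldsymbol{\varphi}^\odot$ --- which, exactly as in \cref{prop:eigenfunctions}, is the element of $C_T(\mathbb{R},X^\odot)$ that one calls an adjoint eigenfunction of $\mathcal{A}^\star$ --- the stated formula is the $T$-periodic extension of \eqref{eq:adjointeigfunction}, and I would check $(\mathcal{A}^\odot\boldsymbol{\varphi}^\odot)(t) = A^\odot\boldsymbol{\varphi}^\odot(t) + \dot{\boldsymbol{\varphi}}^\odot(t) = \lambda\boldsymbol{\varphi}^\odot(t)$ via Fourier series. Writing $\boldsymbol{p}(t) = \sum_m p_m e^{im\omega_T t}$, the $m$-th Fourier coefficient of $\boldsymbol{\varphi}^\odot$ is $\big(p_m,\ \theta \mapsto \int_\theta^h e^{(\lambda - im\omega_T)(\theta - s)} p_m\,d\zeta(s)\big)$, which by \cref{prop:eigenfunctions} is the autonomous adjoint eigenfunction at the eigenvalue $\lambda - im\omega_T$ associated with the left null vector $p_m$ of $\Delta(\lambda - im\omega_T)$ (with $p_m = 0$ unless $\lambda - im\omega_T \in \sigma(A)$); since $A^\odot$ acts on this mode as multiplication by $\lambda - im\omega_T$ and $\dot{\boldsymbol{\varphi}}^\odot$ multiplies mode $m$ by $im\omega_T$, the eigenvalue $\lambda$ is reproduced on every mode, giving $\mathcal{A}^\odot\boldsymbol{\varphi}^\odot = \lambda\boldsymbol{\varphi}^\odot$ and $\boldsymbol{\varphi}^\odot \neq 0$. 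A direct verification as in \cite[Theorem IV.5.9]{Diekmann1995} works equally well, the $\dot{\boldsymbol{\varphi}}^\odot$-term supplying precisely the $-\dot{\boldsymbol{p}}$ that turns the autonomous relation $p\Delta(\lambda) = 0$ into $(\boldsymbol{\Delta}^\star(\lambda)\boldsymbol{p})(t) = 0$. The identities \eqref{eq:Deltapmqm} then follow by substituting the Fourier expansions of $\boldsymbol{q}$ and $\boldsymbol{p}$ into $\boldsymbol{\Delta}(\lambda)\boldsymbol{q} = 0$ and $\boldsymbol{\Delta}^\star(\lambda)\boldsymbol{p} = 0$, reproducing \eqref{eq:qm} and \eqref{eq:pm} at $z = \lambda$ with vanishing right-hand sides.

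For the pairing identity I would again work mode by mode. Because $\lambda$ is a simple eigenvalue of $\mathcal{A}$, the set $\{m \in \mathbb{Z} : \lambda + im\omega_T \in \sigma(A)\}$ is a singleton --- it is finite because the characteristic roots of $A$ have real parts tending to $-\infty$ along vertical lines, and it collapses to one point by simplicity via $\dim\ker\boldsymbol{\Delta}(\lambda) = \sum_m \dim\ker\Delta(\lambda + im\omega_T)$, and similarly for $\boldsymbol{\Delta}^\star(\lambda)$ --- so $\boldsymbol{q}$ and $\boldsymbol{p}$ are single exponentials and no convergence question arises. Evaluating the natural pairing \eqref{eq:naturalpairings} mode by mode, $\langle\boldsymbol{\varphi}^\odot,\boldsymbol{\varphi}\rangle_T = \frac{1}{T}\int_0^T \langle\boldsymbol{\varphi}^\odot(t),\boldsymbol{\varphi}(t)\rangle\,dt = \sum_m \langle\varphi^\odot_{(m)},\varphi_{(-m)}\rangle$, where $\varphi^\odot_{(m)}$ is the autonomous adjoint eigenfunction built from $p_m$ and $\varphi_{(-m)}(\theta) = e^{(\lambda - im\omega_T)\theta}q_{-m}$ the autonomous eigenfunction built from $q_{-m}$, both belonging to the same eigenvalue $\lambda - im\omega_T$ of $A$ by \eqref{eq:Deltapmqm}; \cref{prop:eigenfunctions} gives $\langle\varphi^\odot_{(m)},\varphi_{(-m)}\rangle = p_m\Delta'(\lambda - im\omega_T)q_{-m}$, which is the rightmost equality in \eqref{eq:pairingidentity}. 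The middle equality is Parseval applied to \eqref{eq:pairingT2}, using that differentiating \eqref{eq:Delta(z)q} in $z$ makes $\Delta'(\lambda + im\omega_T)q_m$ the $m$-th Fourier coefficient of $\boldsymbol{\Delta}'(\lambda)\boldsymbol{q}$, so the modes couple as $k + m = 0$ (cf. \eqref{eq:qm}). Nonvanishing follows because exactly one mode $m_0$ contributes, $\mu := \lambda - im_0\omega_T \in \sigma(A)$ is simple, and \cref{prop:eigenfunctions} gives $p_{m_0}\Delta'(\mu)q_{-m_0} \neq 0$; equivalently, $\lambda$ being a simple pole of the resolvent of $\mathcal{A}$ forces the normalisation constant of the associated rank-one spectral projection, which is precisely this pairing, to be nonzero, as in \cite[Section 3.2]{Bosschaert2025}.

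The only real obstacle is the index book-keeping: the Fourier modes of $\boldsymbol{q}$ sit over the eigenvalues $\lambda + im\omega_T$ of $A$, those of $\boldsymbol{p}$ over $\lambda - im\omega_T$, and the $T$-averaged pairing couples complementary indices $k + m = 0$ --- which is exactly what makes the same eigenvalue $\lambda - im\omega_T$ of $A$ appear on both factors of each term and lets \cref{prop:eigenfunctions} do the work; once this alignment is set up, every individual step is routine. If one insisted on avoiding Fourier series (verifying the $\mathcal{A}^\odot$-eigenfunction and the middle equality of \eqref{eq:pairingidentity} directly), the one genuinely technical point would be interchanging the Riemann-Stieltjes $d\zeta$-integral with the $\theta$- and (periodic) $t$-integrals and using $T$-periodicity to translate $t$; the analogous limit-cycle computations in \cite[Section 3]{Bosschaert2025} are organised this way.
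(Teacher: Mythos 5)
Your proposal is correct, and it fills in the proposition by a route that differs from the paper's. The paper disposes of the eigenfunction representations and of the first and last parts of \eqref{eq:pairingidentity} by citing the limit-cycle results \cite[Theorems 5 and 8]{Bosschaert2025}, and only works out the middle equality explicitly, via uniform convergence of the Fourier series of the $C^1$ functions $\boldsymbol p,\boldsymbol q$ and the $\ell^1$ convolution (product) formula, identifying the $T$-average with the zeroth convolution coefficient $r_0$. You instead exploit that the linear part here is autonomous: you verify $\mathcal{A}\boldsymbol\varphi=\lambda\boldsymbol\varphi$ by a direct $\partial_\theta-\partial_t$ computation (with the domain condition $\partial_\theta\boldsymbol\varphi(t)(0)=L\boldsymbol\varphi(t)$ being exactly $(\boldsymbol\Delta(\lambda)\boldsymbol q)(t)=0$), and you reduce the adjoint eigenfunction property, the first equality of \eqref{eq:pairingidentity} and the nonvanishing to the autonomous \cref{prop:eigenfunctions} mode by mode, using that the bilinear pairing averages to the $k+m=0$ coupling and that only finitely many (by simplicity, exactly one) Fourier modes survive because $\sigma(A)$ meets any vertical strip in finitely many points. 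This buys a self-contained argument in which the nonvanishing is transparent (a single contributing mode at a simple eigenvalue of $A$), whereas the paper's citation-based route is shorter and carries over to the genuinely time-periodic linear setting of \cite{Bosschaert2025}, where a modewise reduction to an autonomous problem is not available; your treatment of the middle equality is in substance the same computation as the paper's, with finiteness of the nonzero modes replacing the uniform-convergence/$\ell^1$ justification. Two minor points: your singleton-mode step interprets ``simple'' as $\dim\ker\boldsymbol\Delta(\lambda)=1$, which is stronger than the paper's literal phrasing ($\lambda=\lambda_A+im\omega_T$ with $\lambda_A$ simple for $A$) but is the reading under which the nonvanishing in \eqref{eq:pairingidentity} actually holds for arbitrary null functions (a resonance between two distinct eigenvalues of $A$ differing by $i\omega_T\mathbb{Z}$ would allow misaligned $\boldsymbol p,\boldsymbol q$ with zero pairing), and it is what is used in the fold and nonresonant Hopf applications; and your Fourier-mode verification of $\mathcal{A}^\odot\boldsymbol\varphi^\odot=\lambda\boldsymbol\varphi^\odot$ tacitly uses that only finitely many $p_m$ are nonzero, a consequence of \eqref{eq:Deltapmqm} that you record only later, so state it before differentiating the series term by term (or fall back on the direct verification you already mention).
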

\begin{proof}
The representation of the (adjoint) eigenfunction can be obtained similarly as in the proof of \cite[Theorem 5 and 8]{Bosschaert2025}. The condition on the Fourier coefficients $q_m$ and $p_m$ can be verified as in \eqref{eq:qm} and \eqref{eq:pm}. The first and last identity in \eqref{eq:pairingidentity} can be proven similarly as in the proof of \cite[Theorem 5 and 8]{Bosschaert2025}. It only remains to show the second identity in \eqref{eq:pairingidentity}. Using \eqref{eq:Delta(z)q} and the Fourier expansions of $\boldsymbol{q}$ and $\boldsymbol{p}$, a straightforward computation shows that
\begin{equation*}
    \langle \boldsymbol p, \boldsymbol\Delta'(\lambda) \boldsymbol q \rangle_T = \frac{1}{T} \sum_{k,l \in \mathbb{Z}} \int_0^T  p_k \Delta(\lambda + i l\omega_T)q_l e^{i(k+l) \omega_T t} dt = \frac{1}{T}\sum_{m \in \mathbb{Z}}\int_0^T  r_m e^{i m \omega_T t} dt = r_0,
\end{equation*}
where we used in the first equality the uniform convergence of both Fourier series since $\boldsymbol{p}$ and $\boldsymbol{q}$ are $C^1$-smooth, and in the second equality the product formula for Fourier series. Here, $r_m$ denotes the well-defined $m$th term of the convolution of the sequences $(p_m)_m$ and $(q_m)_m$, which is justified by the fact that both sequences belong to $\ell^1(\mathbb{Z})$.
\end{proof}

Recall from \Cref{sec:DDEs} that $\sigma(A)$ contains only finitely many elements within any vertical strip of the complex plane. Consequently, the matrix $\Delta(\lambda + i m \omega_T)$ is singular for only finitely many integers $m \in \mathbb{Z}$. Therefore, the vectors $q_m$ and $p_m$ in \eqref{eq:Deltapmqm} are nonzero for only finitely many such $m$, implying that the sum in \eqref{eq:pairingidentity} is actually finite. In particular, when deriving the critical normal form coefficients for the periodically forced fold and nonresonant Hopf bifurcation in \Cref{sec:normalization}, it turns out that only $q_0$ and $p_0$ are nonzero, meaning that \Cref{prop:curlyeigenfunctions} reduces eventually to \Cref{prop:eigenfunctions} for these cases. We now state the crucial Fredholm alternative corresponding to \eqref{eq:solvability}.

\begin{proposition}[Fredholm solvability condition] \label{prop:FSC}
Suppose that $z=\lambda \in \sigma(A) + i \omega_T \mathbb{Z}$. If $(v_0,v)$ is a solution of \eqref{eq:solvability} then
\begin{equation} \label{eq:FSC} \tag{FSC}
    \langle (w_0,w), \boldsymbol{\varphi}^\odot \rangle_T = 0,
\end{equation}
where $\boldsymbol{\varphi}^\odot$ is an eigenfunction of $\mathcal{A}^\star$ corresponding to $\lambda$. 

\end{proposition}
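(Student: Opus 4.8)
The plan is to run the standard Fredholm argument, using that $\mathcal{A}^{\odot\star}$ is, by construction, the adjoint of $\mathcal{A}^\odot$ with respect to the bilinear pairing $\langle\cdot,\cdot\rangle_T$ from \eqref{eq:pairingT} on $C_T(\mathbb{R},X^{\odot\star})\times C_T(\mathbb{R},X^\odot)$, together with the observation that the adjoint eigenfunction $\boldsymbol{\varphi}^\odot$ supplied by \cref{prop:curlyeigenfunctions} is in fact an eigenfunction of $\mathcal{A}^\odot$ corresponding to $\lambda$. Indeed, from its explicit representation $\boldsymbol{\varphi}^\odot$ takes values in $X^\odot$ and is continuously differentiable, hence $\boldsymbol{\varphi}^\odot\in\mathcal{D}(\mathcal{A}^\odot)$; since $\mathcal{A}^\odot$ is the part of $\mathcal{A}^\star$ in $C_T(\mathbb{R},X^\odot)$ — mirroring the relation $A^\odot\subseteq A^\star$ on the level of generators — the statement in \cref{prop:curlyeigenfunctions} that $\boldsymbol{\varphi}^\odot$ is an eigenfunction of $\mathcal{A}^\star$ is equivalent to $\mathcal{A}^\odot\boldsymbol{\varphi}^\odot=\lambda\boldsymbol{\varphi}^\odot$, which can also be checked directly from the formula for $\boldsymbol{\varphi}^\odot$ using $\boldsymbol{\Delta}^\star(\lambda)\boldsymbol{p}=0$, exactly as in the proof of that proposition.

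With this identification in place, the computation is immediate. Let $(v_0,v)\in\mathcal{D}(\mathcal{A}^{\odot\star})$ be a solution of \eqref{eq:solvability}, so that $(w_0,w)=(\lambda I-\mathcal{A}^{\odot\star})(v_0,v)$. Pairing against $\boldsymbol{\varphi}^\odot$ and invoking the adjoint identity $\langle\mathcal{A}^{\odot\star}\boldsymbol{\psi}^{\odot\star},\boldsymbol{\psi}^\odot\rangle_T=\langle\boldsymbol{\psi}^{\odot\star},\mathcal{A}^\odot\boldsymbol{\psi}^\odot\rangle_T$, valid for $\boldsymbol{\psi}^{\odot\star}\in\mathcal{D}(\mathcal{A}^{\odot\star})$ and $\boldsymbol{\psi}^\odot\in\mathcal{D}(\mathcal{A}^\odot)$, yields
\begin{equation*}
\langle(w_0,w),\boldsymbol{\varphi}^\odot\rangle_T=\lambda\langle(v_0,v),\boldsymbol{\varphi}^\odot\rangle_T-\langle\mathcal{A}^{\odot\star}(v_0,v),\boldsymbol{\varphi}^\odot\rangle_T=\lambda\langle(v_0,v),\boldsymbol{\varphi}^\odot\rangle_T-\langle(v_0,v),\mathcal{A}^\odot\boldsymbol{\varphi}^\odot\rangle_T,
\end{equation*}
and since $\mathcal{A}^\odot\boldsymbol{\varphi}^\odot=\lambda\boldsymbol{\varphi}^\odot$ and $\langle\cdot,\cdot\rangle_T$ is bilinear, the right-hand side collapses to $\lambda\langle(v_0,v),\boldsymbol{\varphi}^\odot\rangle_T-\lambda\langle(v_0,v),\boldsymbol{\varphi}^\odot\rangle_T=0$, which is precisely \eqref{eq:FSC}.

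The only point that genuinely needs care — and the step I would treat as the main obstacle — is the bookkeeping around the adjoint relation: one must make sure that the pairing appearing in \eqref{eq:FSC} is exactly the one on $C_T(\mathbb{R},X^{\odot\star})\times C_T(\mathbb{R},X^\odot)$ for which $\mathcal{A}^{\odot\star}$ and $\mathcal{A}^\odot$ are mutually adjoint (it is, by the identifications $(w_0,w)\in C_T(\mathbb{R},X^{\odot\star})$ and $\boldsymbol{\varphi}^\odot\in C_T(\mathbb{R},X^\odot)$), and that $\boldsymbol{\varphi}^\odot$ truly lies in $\mathcal{D}(\mathcal{A}^\odot)$ so that the adjoint identity is applicable at this particular pair of arguments. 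Both are consequences of \cref{prop:curlyeigenfunctions} and the definitions of $\mathcal{A}^\odot$ and $\mathcal{A}^{\odot\star}$; no estimates or limiting arguments are involved, so once these identifications are spelled out the proof is complete. As already noted in the excerpt, in the applications of \cref{sec:normalization} only the zeroth Fourier mode of $\boldsymbol{p}$ survives, so \cref{prop:curlyeigenfunctions} reduces to \cref{prop:eigenfunctions} and \eqref{eq:FSC} becomes a scalar solvability condition.
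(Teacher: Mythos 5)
Your proposal is correct and follows essentially the same route as the paper: pair \eqref{eq:solvability} with $\boldsymbol{\varphi}^\odot$, move $\lambda I-\mathcal{A}^{\odot\star}$ across the bilinear pairing $\langle\cdot,\cdot\rangle_T$ using that $\mathcal{A}^{\odot\star}$ is the adjoint of $\mathcal{A}^\odot$, and use $\mathcal{A}^\odot\boldsymbol{\varphi}^\odot=\lambda\boldsymbol{\varphi}^\odot$ to conclude. The only cosmetic difference is that the paper justifies this last eigenvalue relation by citing \cref{prop:spectraequal}, whereas you verify it directly from the explicit representation of $\boldsymbol{\varphi}^\odot$ and $\boldsymbol{\Delta}^\star(\lambda)\boldsymbol{p}=0$; both are fine.
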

\begin{proof}
It follows from \eqref{eq:solvability} and \eqref{eq:pairingT} that $\langle (w_0,w),\boldsymbol\varphi^\odot \rangle_T = \langle (\lambda I-\mathcal{A}^{\odot \star})(v_0,v), \boldsymbol\varphi^\odot \rangle_T = \langle (v_0,v), (\lambda I - \mathcal{A}^{\odot}) \boldsymbol\varphi^\odot \rangle_T = 0$, since $\boldsymbol{\varphi}^\odot$ satisfies $\mathcal{A}^\odot \boldsymbol\varphi^\odot = \lambda \boldsymbol\varphi^\odot$ due to \Cref{prop:spectraequal}.
\end{proof}

If $z = \lambda \in \sigma(A) + i \omega_T \mathbb{Z}$ and \eqref{eq:solvability} is consistent, then any solution that exists is not unique as one may add any linear combination of eigenfunctions of $\mathcal{A}^{\odot \star}$. To systematically select a particular solution, we employ the \emph{bordered operator inverse} $(\lambda I - \mathcal{A}^{\odot \star})^{\inv} : \mathcal{R}(\lambda I - \mathcal{A}^{\odot \star}) \to \mathcal{D}(\mathcal{A}^{\odot \star})$. When $\lambda$ is simple, this operator yields a unique solution to the extended system
\begin{equation} \label{eq:borderedinverse}
    (\lambda I - \mathcal{A}^{\odot \star}) (v_0,v) = (w_0,w), \quad \langle (v_0,v), \boldsymbol{\varphi}^\odot \rangle_T = 0,
\end{equation}
for every right-hand side $(w_0,w)$ for which \eqref{eq:solvability} is consistent. Here, $\boldsymbol{\varphi}^\odot$ denotes an adjoint eigenfunction of $\mathcal{A}^\star$ corresponding to the eigenvalue $\lambda$. The pairing in \eqref{eq:borderedinverse} can be computed explicitly in specific settings, see \cite[Proposition 3.15]{Bosschaert2025} for a general result in the setting of bifurcations of limit cycles in DDEs. Since we are mainly interested in \Cref{sec:normalization} in codim 1 bifurcations of equilibria, we do not need such an explicit representation here.

\section{Normalization for periodically forced DDEs} \label{sec:normalization}
Our next objective is to derive explicit computational formulas for the critical normal form coefficients of the periodically forced fold and nonresonant Hopf bifurcation using \eqref{eq:homological}. Although our derivation focuses specifically on these two cases, the normalization method we present below is more general and can also be applied to bifurcations of higher codimension.

According to \Cref{thm:periodicnormalform}, we can assume that a parametrization of (the second component of) $\mathcal{W}_{\loc}^c(\overline{x})$ has been selected so that the restriction of \eqref{eq:DDE} on this invariant manifold has the periodically forced normal form \eqref{eq:Normalformxi}. The same theorem tells us that there exists a locally defined $C^k$-smooth parametrization $\mathcal{H} : \mathbb{R} \times \mathbb{R}^{n_0} \to X$ of (the second component of) $\mathcal{W}_{\loc}^c(\overline{x})$ such that the solution $u(t) = x_t$ of \eqref{eq:DDEODE} can be written as
\begin{equation} \label{eq:u=H}
    u(t) = \mathcal{H}(t,\xi(t)), \quad \forall t \in I,
\end{equation}
where the map $\mathcal{H}$ has the representation
\begin{equation*}
    \mathcal{H}(t,\xi) = \overline{x} + \sum_{i=1}^{n_0} \xi_i \varphi_i + \sum_{|\mu| = 2}^{k} \frac{1}{\mu !} H_\mu(t)\xi^{\mu} + \mathcal{O}(|\xi|^{k+1}).
\end{equation*}
Combining \eqref{eq:DDEODE} and \eqref{eq:u=H} yields the \emph{homological equation}
\begin{equation} \label{eq:homological} \tag{HOM}
    j\bigg( \frac{\partial \mathcal{H}(t,\xi)}{\partial t}  +  \frac{\partial \mathcal{H}(t,\xi)}{\partial \xi}  \dot{\xi} \bigg) = A_0^{\odot \star} j(\mathcal{H}(t,\xi)) + W(t,\mathcal{H}(t,\xi)),
\end{equation}
where $\dot{\xi}$ is given by the periodically forced normal form \eqref{eq:Normalformxi}. The unknowns in \eqref{eq:homological} are $H_\mu$ and the coefficients $P_\mu$, arising from the Taylor expansion of $P(t,\cdot)$, for $2 \leq |\mu| \leq k$. These coefficients are to be computed up to a finite order, depending on the bifurcation of interest. To do so, we first expand $W$ (and thus $F$) as performed in \eqref{eq:TaylorexpansionW}. Next, we collect terms of the form $\xi^\mu$ in ascending order of $|\mu| \geq 2$ and solve the resulting periodic linear operator equations as discussed in \Cref{sec:spectral}. The coefficients $H_\mu$ and $P_\mu$ can then be determined recursively by applying the Fredholm solvability condition \eqref{eq:FSC}.

This method will be used in the following two subsections to derive the critical normal form coefficients of the periodically forced fold (\Cref{subsec:fold}) and the periodically forced nonresonant Hopf bifurcation (\Cref{subsec:hopf}). It is sufficient to expand the nonlinearity $W$ (and $F$) around $(t,\overline{x})$ in the second component up to second order for the fold, and to the third order for the nonresonant Hopf bifurcation. Therefore, it is sufficient to take $k \geq 2$ for the fold, and $k \geq 3$ for the nonresonant Hopf bifurcation. For simplicity of notation, we will also write
\begin{gather*}
    B(t;\psi(t),\psi(t)) \coloneqq D_2^2F(t,\overline{x})(\psi(t)^{(2)}),  \quad C(t;\psi(t),\psi(t),\psi(t)) \coloneqq D_2^3F(t,\overline{x})(\psi(t)^{(3)}), \\
    B(\psi_1,\psi_2) \coloneqq (t \mapsto B(t;\psi_1(t),\psi_2(t)), \quad C(\psi_1,\psi_2,\psi_3) \coloneqq (t \mapsto C(\psi_1(t),\psi_2(t),\psi_3(t))).
\end{gather*}

\subsection{Periodically forced fold bifurcation} \label{subsec:fold}
Suppose that the equilibrium $\overline{x}$ is nonhyperbolic with $\sigma_0 = \{0\}$, where the eigenvalue $0$ of $A$ is simple. Then, there exist an eigenfunction $\varphi$ and an adjoint eigenfunction $\varphi^\odot$ such that
\begin{equation*}
    A \varphi = 0, \quad A^\star \varphi^\odot = 0, \quad \langle \varphi^\odot,\varphi \rangle = 1.
\end{equation*}
The eigenfunctions are explicitly given by \eqref{eq:eigfunction} and \eqref{eq:adjointeigfunction} with $q \in \mathbb{R}^n$ and $p \in \mathbb{R}^{n \star}$ satisfying
\begin{equation} \label{eq:foldnormalization}
    \Delta(0)q = 0, \quad p \Delta(0) = 0, \quad p\Delta'(0)q = 1.
\end{equation}
Since all ingredients of the linear part are determined, we turn our attention to the normal form. 
\begin{proposition} \label{prop:foldNF}
The periodically forced critical normal form at the fold bifurcation reads
\begin{equation} \label{eq:foldNF}
    \dot{\xi} = b \xi^2 + \mathcal{O}(|\xi|^3),
\end{equation}
where $\xi,b \in \mathbb{R}$ and the $\mathcal{O}$-terms are $T$-periodic. The bifurcation is nondegenerate if $b \neq 0$.
\end{proposition}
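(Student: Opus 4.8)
The plan is to obtain \eqref{eq:foldNF} as a direct specialization of the general normal form theorem \cref{thm:periodicnormalform} to the one-dimensional center subspace ($n_0 = 1$) associated with the simple eigenvalue $0$, for which the relevant Jordan matrix is the scalar $M_0 = 0$. By \cref{thm:periodicnormalform}, any solution $x_t$ of \eqref{eq:DDE} on $\mathcal{W}_{\loc}^c(\overline{x})$ can be written as $x_t = \overline{x} + Q_0\xi + H(t,\xi)$ with $Q_0\xi = \xi\varphi$, and the reduced dynamics takes the form $\dot\xi = M_0\xi + P(t,\xi) + \mathcal{O}(|\xi|^{k+1})$, where $P$ is a polynomial in $\xi$ of degree at most $k$, at least quadratic in $\xi$, $T$-periodic in $t$, and constrained by \eqref{eq:restrictionP} (equivalently \eqref{eq:Normalformxi2}). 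Since $k \geq 2$ here, this is already all the structural input needed.

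Next I would insert $M_0 = 0$, hence $M_0^\star = 0$, into the constraint \eqref{eq:Normalformxi2}. The right-hand side $D_2P(t,\xi)M_0^\star\xi - M_0^\star P(t,\xi)$ vanishes identically, so $D_1 P(t,\xi) = 0$ for all $(t,\xi) \in \mathbb{R}\times\mathbb{R}$; that is, $P(t,\xi) = P(\xi)$ is independent of $t$ (equivalently, \eqref{eq:restrictionP} collapses to $P(t,\xi) = P(0,\xi)$). Writing the Taylor expansion $P(\xi) = \sum_{q=2}^k P_q\xi^q$ with real coefficients $P_q$ and setting $b := P_2$, the reduced equation becomes $\dot\xi = b\xi^2 + \sum_{q=3}^k P_q\xi^q + \mathcal{O}(|\xi|^{k+1})$.

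The terms of order $\geq 3$ then fold into a $T$-periodic remainder: the polynomial tail $\sum_{q=3}^k P_q\xi^q$ is autonomous and hence trivially $T$-periodic, while the remainder $\mathcal{O}(|\xi|^{k+1})$ is $T$-periodic by \cref{thm:periodicnormalform}. This yields \eqref{eq:foldNF} with $\xi, b \in \mathbb{R}$. For the last assertion, the truncated scalar equation $\dot\xi = b\xi^2$ is the textbook normal form of a fold (saddle-node) bifurcation; when $b \neq 0$ the local topological picture near $\xi = 0$ is unaffected by the higher-order and $T$-periodic perturbations, so $b \neq 0$ is precisely the nondegeneracy condition — this is classical, see \cite{Haragus2011} for the periodically forced version and \cite{Kuznetsov2023a} for the autonomous prototype.

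The derivation is short, so there is no real technical obstacle; the one point deserving care is the claim that the $\mathcal{O}(|\xi|^3)$-terms are $T$-periodic, which is why it is worth separating the autonomous polynomial tail (trivially periodic) from the top-order remainder (periodic by \cref{thm:periodicnormalform}). The conceptual content is the structural observation that, because $M_0 = 0$, the homological constraint \eqref{eq:Normalformxi2} removes \emph{all} explicit time dependence from the reduced normal form, so that periodic forcing contributes nothing to the fold at leading order.
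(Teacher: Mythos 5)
Your proposal is correct and follows essentially the same route as the paper: set $M_0 = 0$ so that the constraint \eqref{eq:restrictionP} (you use its equivalent differential form \eqref{eq:Normalformxi2}, which the paper notes is the same condition) forces $P$ to be $t$-independent, then read \eqref{eq:foldNF} off from \eqref{eq:Normalformxi} with $b$ the quadratic coefficient, and invoke the classical autonomous fold result for the nondegeneracy condition $b \neq 0$. Your extra remark separating the autonomous polynomial tail from the $T$-periodic $\mathcal{O}(|\xi|^{k+1})$ remainder is a harmless refinement of what the paper leaves implicit.
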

\begin{proof}
As $M_0 = 0$, \eqref{eq:restrictionP} tells us that $P(t,\xi) = P(0,\xi)$ and thus $P$ is $t$-independent. Hence, \eqref{eq:foldNF} follows immediately from \eqref{eq:Normalformxi}. As \eqref{eq:foldNF} is similar to the critical normal form at the fold bifurcation in autonomous ODEs, the nondegeneracy condition follows from \cite[Theorem 3.1]{Kuznetsov2023a}.
\end{proof}
To illustrate the resulting dynamics of \eqref{eq:foldNF} near the periodically forced fold bifurcation, it is convenient to introduce an unfolding parameter $\beta \in \mathbb{R}$ into \eqref{eq:foldNF} as
\begin{equation} \label{eq:foldPFexample}
    \dot{\xi}(t) = \beta + b \xi(t)^2 + N(t,\xi(t))\xi(t)^3,
\end{equation}
where $N$ is $T$-periodic in the first component and continuous. If \eqref{eq:DDE} is autonomous, then $N$ is $t$-independent, and the resulting dynamics near the fold bifurcation are well understood, see \cite[Section 3.2]{Kuznetsov2023a} for further details. If $b \neq 0$, then the system has two hyperbolic equilibria $\overline{x}_{\pm}$ of opposite stability for $\beta < 0$ if $b > 0$ (or for $\beta > 0$ if $b < 0$). These equilibria collide at the critical parameter value $\beta = 0$, where the resulting equilibrium $\overline{x} = 0$ becomes nonhyperbolic (fold point). For $\beta > 0$ if $b > 0$ (or for $\beta < 0$ if $b < 0$), the equilibrium disappears entirely. If $b = 0$, then a \emph{cusp} bifurcation generically occurs, see \cite[Section 8.2]{Kuznetsov2023a} for more information. 

In the setting of nonlinearly periodically forced ODEs and DDEs, the bifurcation scenario is slightly different. If $b \neq 0$, then the system has two $T$-periodic hyperbolic limit cycles $\Gamma_{\pm}$ of opposite stability for $\beta < 0$ if $b > 0$ (or for $\beta > 0$ if $b < 0$). These limit cycles collide at the critical parameter value $\beta = 0$ where the resulting equilibrium $\overline{x} = 0$ is nonhyperbolic and undergoes a periodically forced fold bifurcation. For $\beta > 0$ if $b > 0$ (or for $\beta < 0$ if $b < 0$), the equilibrium disappears entirely. This behaviour is also illustrated in \Cref{fig:foldNFexample}.

\begin{figure}[ht]
    \centering
\includegraphics[width=0.95\linewidth]{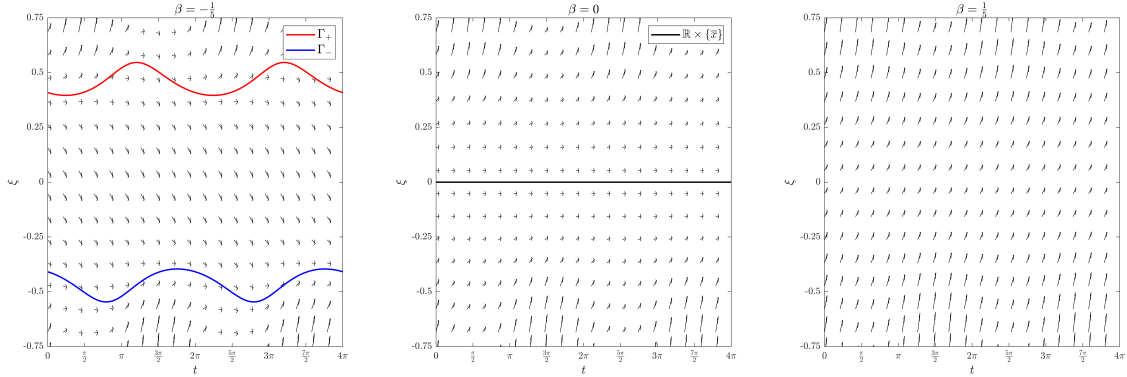}
    \caption{Vector field of the nonlinearly periodically forced ODE \eqref{eq:foldPFexample}, shown together with the invariant sets $\Gamma_{\pm}$ and $\mathbb{R} \times \{\overline{x}\}$ for various values of the unfolding parameter $\beta$, while keeping $b = 1$ and $N(t,\xi(t)) = \sin(t)$ fixed.}
    \label{fig:foldNFexample}
\end{figure}

Let us now derive an explicit computational formula for the critical normal form coefficient $b$ for nonlinearly periodically forced DDEs. The $2$-dimensional center manifold $\mathcal{W}_{\loc}^c(\overline{x})$ at the periodically forced fold bifurcation can be parametrized locally near $\overline{x}$ as
\begin{equation*}
    \mathcal{H}(t,\xi) = \overline{x} + \xi \varphi + \frac{1}{2} H_2(t) \xi^2 + \mathcal{O}(\xi^3), \quad t \in \mathbb{R}, \ \xi \in \mathbb{R},
\end{equation*}
where $H_2$ is $T$-periodic. Filling everything into \eqref{eq:homological}, we obtain for the constant and linear terms the well-known identities \eqref{eq:xixi2 terms}. Collecting the $\xi^2$-terms, we obtain the singular equation
\begin{equation*}
    - \mathcal{A}^{\odot \star} \iota H_{2} = B(\varphi,\varphi)r^{\odot \star} - 2b\iota\varphi,
\end{equation*}
since $0$ is an eigenvalue of $A$, recall \Cref{prop:spectraequal}. 
Applying \eqref{eq:FSC} onto this equation yields
\begin{equation*}
    \langle \boldsymbol{p},B(\varphi,\varphi) \rangle_T - 2 b \langle \boldsymbol{\varphi}^\odot, \varphi \rangle_T =\langle B(\varphi,\varphi)r^{\odot \star} - 2b \iota \varphi, \boldsymbol{\varphi}^\odot \rangle _T = 0,
\end{equation*}
where $\boldsymbol{\varphi}^\odot = (\boldsymbol{p},\boldsymbol{g})$ is an adjoint eigenfunction of $\mathcal{A}^\star$ corresponding to $0$. Since $0$ is the only eigenvalue of $A$ on the imaginary axis, \eqref{eq:Deltapmqm} tells us that the Fourier coefficients $q_m$ and $p_m$ of $\boldsymbol{q}$ and $\boldsymbol{p}$ are given by $q_0 = q$ and $p_0 = p$ while $q_m = p_m = 0$ for all $m \neq 0$. Hence, $\boldsymbol{q}(t) = q$ and $\boldsymbol{p}(t) = p$ for all $t \in \mathbb{R}$. Moreover, we observe that the eigenfunction $\boldsymbol{\varphi}$ of $\mathcal{A}$ corresponding to $0$ is given by $\boldsymbol{\varphi}(t) = \varphi = q$ for all $t \in \mathbb{R}$. An application of \eqref{eq:pairingidentity} and \eqref{eq:foldnormalization} shows that $\langle \boldsymbol\varphi^\odot,\varphi \rangle_T = p \Delta'(0) q = 1$, and so we conclude from \eqref{eq:naturalpairings} and \eqref{eq:pairingT} that
\begin{equation} \label{eq:criticalcoeffffold}
    b = \frac{1}{2}\langle p,B(\varphi,\varphi) \rangle_T.
\end{equation}
\begin{remark} \label{remark:fold}
The expression in \eqref{eq:criticalcoeffffold} naturally extends the formula for the critical normal form coefficient associated with fold bifurcations in autonomous DDEs. In the latter case, where the right-hand side $F$ in \eqref{eq:DDE} does not depend on time, the map $B$ becomes time-independent and the elements involved in the pairing are constant. As a result, we obtain
\begin{equation*}
    b = \frac{1}{2} p B(\varphi,\varphi),
\end{equation*}
which coincides exactly with the expression found in \cite[Equation (3.32)]{Janssens2010}. \hfill $\lozenge$
\end{remark}

\subsection{Periodically forced nonresonant Hopf bifurcation} \label{subsec:hopf}
Suppose that the equilibrium $\overline{x}$ is nonhyperbolic with $\sigma_0 = \{ \pm i \omega\}$, where the eigenvalues $\pm i \omega$ of $A$ are simple and $\omega > 0$. Then, there exist an eigenfunction $\varphi$ and an adjoint eigenfunction $\varphi^\odot$ such that
\begin{equation*}
    A \varphi = i \omega \varphi, \quad A^\star \varphi^\odot = i \omega \varphi^\odot, \quad \langle \varphi^\odot,\varphi \rangle = 1.
\end{equation*}
The eigenfunctions are explicitly given by \eqref{eq:eigfunction} and \eqref{eq:adjointeigfunction} with $q \in \mathbb{C}^n$ and $p \in \mathbb{C}^{n \star}$ satisfying
\begin{equation} \label{eq:hopfnormalization}
    \Delta(i \omega)q = 0, \quad p \Delta(i \omega) = 0, \quad p\Delta'(i \omega)q = 1.
\end{equation}
Since all ingredients of the linear part are determined, we turn our attention to the periodically forced Hopf normal form. In contrast to the Hopf bifurcation in autonomous systems, the periodically forced case exhibits richer behaviour since the structure of $P$ from \eqref{eq:Normalformxi} depends on the rationality of $\omega_T/\omega$. If $\omega_T/\omega \in \mathbb{Q}$, then \eqref{eq:DDE} undergoes a so-called \emph{$\!r:\!s$-resonant Hopf bifurcation}, where $r/s = \omega_T/\omega$ and $r,s \in \mathbb{N}_0$ are irreducible (gcd$(r,s) = 1$). The bifurcation is referred to as \emph{strongly resonant} when $r \in \{1, 2, 3\}$ and as \emph{weakly resonant} when $r \geq 4$, see primarily \cite{Zhang2011,Haragus2011} and secondarily \cite{Gambaudo1985,Bajaj1986,Namachchivaya1987,Vance1991} for further details. On the other hand, if $\omega_T/\omega \notin \mathbb{Q}$, then \eqref{eq:DDE} undergoes a \emph{nonresonant Hopf bifurcation} for which its normal form will be derived below.

\begin{proposition} \label{prop:HopfPF}
The periodically forced critical normal form at the nonresonant Hopf bifurcation reads
\begin{equation} \label{eq:HopfNF}
    \dot{\xi} = i \omega \xi + c \xi |\xi|^2 + \mathcal{O}(|\xi|^4),
\end{equation}
where $\xi,c \in \mathbb{C}$ and the $\mathcal{O}$-terms are $T$-periodic. The bifurcation is nondegenerate if $l_1 \coloneqq \frac{1}{\omega} \Re c \neq 0$.
\end{proposition}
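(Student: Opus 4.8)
The plan is to invoke \cref{thm:periodicnormalform} and then reduce everything to the classical autonomous computation, exactly along the lines of the proof of \cref{prop:foldNF}. Here $X_0$ is two-dimensional, spanned by the eigenfunctions for $\pm i\omega$, so after complexification $M_0=\operatorname{diag}(i\omega,-i\omega)$; writing $\xi\in\mathbb{C}$ for the first center coordinate and $\bar\xi$ for the second (forced by the reality of \eqref{eq:DDE}), \cref{thm:periodicnormalform} gives $\dot\xi=i\omega\xi+P(t,\xi,\bar\xi)+\mathcal{O}(|\xi|^{k+1})$, where $P$ is a polynomial of degree $\le k$, at least quadratic in $(\xi,\bar\xi)$, $T$-periodic in $t$, and constrained by \eqref{eq:restrictionP} (equivalently by \eqref{eq:Normalformxi2}).

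First I would pin down the admissible monomials in $P$. Expanding $P$ in a Fourier series in $t$ and substituting the monomial $e^{il\omega_T t}\xi^m\bar\xi^n$ into the first component of \eqref{eq:restrictionP}, while tracking the diagonal action of $e^{\pm tM_0^\star}$ on the two center coordinates, shows that its coefficient can be nonzero only when a resonance relation of the form $l\omega_T=(n-m+1)\,\omega$ holds. Because $\omega_T/\omega\notin\mathbb{Q}$, this forces $l=0$ and $n=m-1$, so $P$ is in fact autonomous and a finite linear combination of the resonant monomials $\xi|\xi|^{2j}$ with $j\ge1$. Truncating at third order --- which is all that is needed here, since $k\ge3$ suffices for the nonresonant Hopf case --- leaves the single term $P(t,\xi,\bar\xi)=c\,\xi|\xi|^2$ with $c\in\mathbb{C}$, all remaining contributions being absorbed into the $T$-periodic $\mathcal{O}(|\xi|^4)$ remainder. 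This establishes \eqref{eq:HopfNF}. Alternatively, one could simply cite \cite[Theorem 3.5.2]{Haragus2011}, since \cref{thm:periodicnormalform} guarantees that the periodically forced normal form for \eqref{eq:DDE} is identical to the one for ODEs.

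For the nondegeneracy claim I would pass to polar coordinates $\xi=\rho e^{i\psi}$ in the truncated normal form $\dot\xi=i\omega\xi+c\,\xi|\xi|^2$, obtaining $\dot\rho=(\Re c)\rho^3+\mathcal{O}(\rho^4)$ and $\dot\psi=\omega+\mathcal{O}(\rho^2)$; the leading behaviour of the amplitude is thus governed by the sign of $\Re c$. Since \eqref{eq:HopfNF} coincides with the truncated normal form of the Andronov--Hopf bifurcation in autonomous systems, whose first Lyapunov coefficient is $l_1=\tfrac1\omega\Re c$, the nondegeneracy condition $l_1\neq0$ follows from the corresponding autonomous statement, see e.g. \cite[Theorem 3.3]{Kuznetsov2023a}. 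The only genuinely delicate point in this proof is the resonance bookkeeping in the second paragraph: one must handle the $M_0^\star$-action and the signs in \eqref{eq:restrictionP} carefully so that the relation $l\omega_T=(n-m+1)\omega$ comes out correctly, after which the irrationality of $\omega_T/\omega$ does all the work and everything else reduces to the classical computation.
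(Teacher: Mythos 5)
Your proposal is correct and follows essentially the same route as the paper: complexify with $M_0=\operatorname{diag}(i\omega,-i\omega)$, use the constraint \eqref{eq:restrictionP} on the Fourier modes of $P$ to obtain the resonance relation (the paper's $(p-q-1)\omega+m\omega_T=0$, identical to your $l\omega_T=(n-m+1)\omega$), invoke $\omega_T/\omega\notin\mathbb{Q}$ to kill all but the autonomous resonant monomials $\xi|\xi|^{2j}$, and deduce nondegeneracy from the autonomous Hopf result in \cite[Theorem 3.3]{Kuznetsov2023a}. The polar-coordinate remark and the alternative citation of \cite[Theorem 3.5.2]{Haragus2011} are harmless additions but not needed.
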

\begin{proof}
As the elements of $\sigma_0$ are purely imaginary, it is convenient cast \eqref{eq:restrictionP} into an ODE in $\mathbb{C}$, where now $M_0 = \mbox{diag}(i \omega, - i \omega)$ and $P$ is a function of $(t,\xi,\bar{\xi})$. Hence, \eqref{eq:restrictionP} shows that
\begin{equation} \label{eq:HopfFourier}
    e^{-i \omega t} P(t,e^{i \omega t} \xi, e^{-i \omega t} \bar{\xi}) = P(0,\xi,\bar{\xi}), \quad \forall (t,\xi) \in \mathbb{R} \times \mathbb{C}.
\end{equation}
Consider the Fourier series of $P(\cdot, \xi,\bar{\xi})$ and let $\alpha_{pq}^{(m)} \xi^p \xi^q e^{im \omega_T t}$ denote the monomials of the associated $m$th Fourier mode. Filling this into \eqref{eq:HopfFourier}, the equation above yields the relation $(p-q-1)\omega + m \omega_T = 0$. Since we are at the nonresonant Hopf bifurcation, there must hold $p = q+1$ and $m=0$, which yields \eqref{eq:HopfNF}. As \eqref{eq:HopfNF} is similar to the critical normal form at the Hopf bifurcation in autonomous ODEs, the nondegeneracy condition follows from \cite[Theorem 3.3]{Kuznetsov2023a}.  
\end{proof}
The real number $l_1$ is called the \emph{first Lyapunov coefficient} and also determines the direction of the bifurcation as will be explained below.

To illustrate the resulting dynamics of \eqref{eq:HopfNF} near the nonresonant Hopf bifurcation, it is convenient to introduce an unfolding parameter $\beta \in \mathbb{R}$ into \eqref{eq:HopfNF} as
\begin{equation} \label{eq:HopfPFexample}
    \dot{\xi}(t) = (\beta+i\omega)\xi(t) + c \xi(t)|\xi(t)|^2 + N(t,\xi(t))|\xi(t)|^4,
\end{equation}
where $N$ is $T$-periodic in the first component and continuous. If \eqref{eq:DDE} is autonomous, then $N$ is $t$-independent, and the resulting dynamics near the Hopf bifurcation is well understood, see \cite[Section 3.2]{Kuznetsov2023a} for further details. If $l_1 < 0$, then the system has a stable hyperbolic trivial equilibrium for $\beta < 0$. This equilibrium becomes nonhyperbolic and weakly attracting at the critical parameter value $\beta = 0$ (Hopf point). For $\beta > 0$, the trivial hyperbolic equilibrium becomes unstable, but is surrounded by a unique stable hyperbolic limit cycle in $\mathbb{C}$. This Hopf bifurcation is called \emph{supercritical}. Conversely, if $l_1 > 0$, the roles of stability are reversed, yielding a \emph{subcritical} Hopf bifurcation with the corresponding unstable limit cycle existing for $\beta < 0$. If $l_1 = 0$, then a \emph{generalized Hopf} (Bautin) bifurcation generically occurs, see \cite[Section 8.3]{Kuznetsov2023a} for more information. 

In the setting of nonlinearly periodically forced ODEs and DDEs, the bifurcation scenario is slightly different \cite{Gambaudo1985,Zhang2011}. If $l_1 < 0$, then the system still has a stable hyperbolic trivial equilibrium for $\beta < 0$, which becomes nonhyperbolic and weakly attracting at $\beta = 0$. For $\beta > 0$, the trivial hyperbolic equilibrium becomes unstable and is surrounded by a stable invariant nonautonomous set $C_\beta$ satisfying $ \mathbb{R} \times \{0\} \subseteq C_\beta \subseteq \mathbb{R} \times \mathbb{C}$. Because the term $N$ in \eqref{eq:HopfPFexample} is $T$-periodic, the invariant set $C_\beta$ becomes generically a helically wound surface that is diffeomorphic to the cylinder $\mathbb{R}\times S^{1}$, and thus appears geometrically as a distorted wavy cylindrical surface. In the absence of periodic forcing, $C_\beta$ reduces to an exact cylinder as explained above. Due to the $T$-periodicity of $N$, it also convenient to identify $C_\beta$ as a 2-torus within $\mathbb{R} / T \mathbb{Z} \times \mathbb{C}$. This nonresonant Hopf bifurcation is also called \emph{supercritical}. When $l_1 > 0$, the roles of stability are reversed, yielding a \emph{subcritical} nonresonant Hopf bifurcation with the corresponding unstable nonautonomous invariant set $C_\beta$ existing for $\beta < 0$. This behaviour is also illustrated in \Cref{fig:HopfNFexample}.

\begin{figure}[ht]
    \centering
    \includegraphics[width=0.95\linewidth]{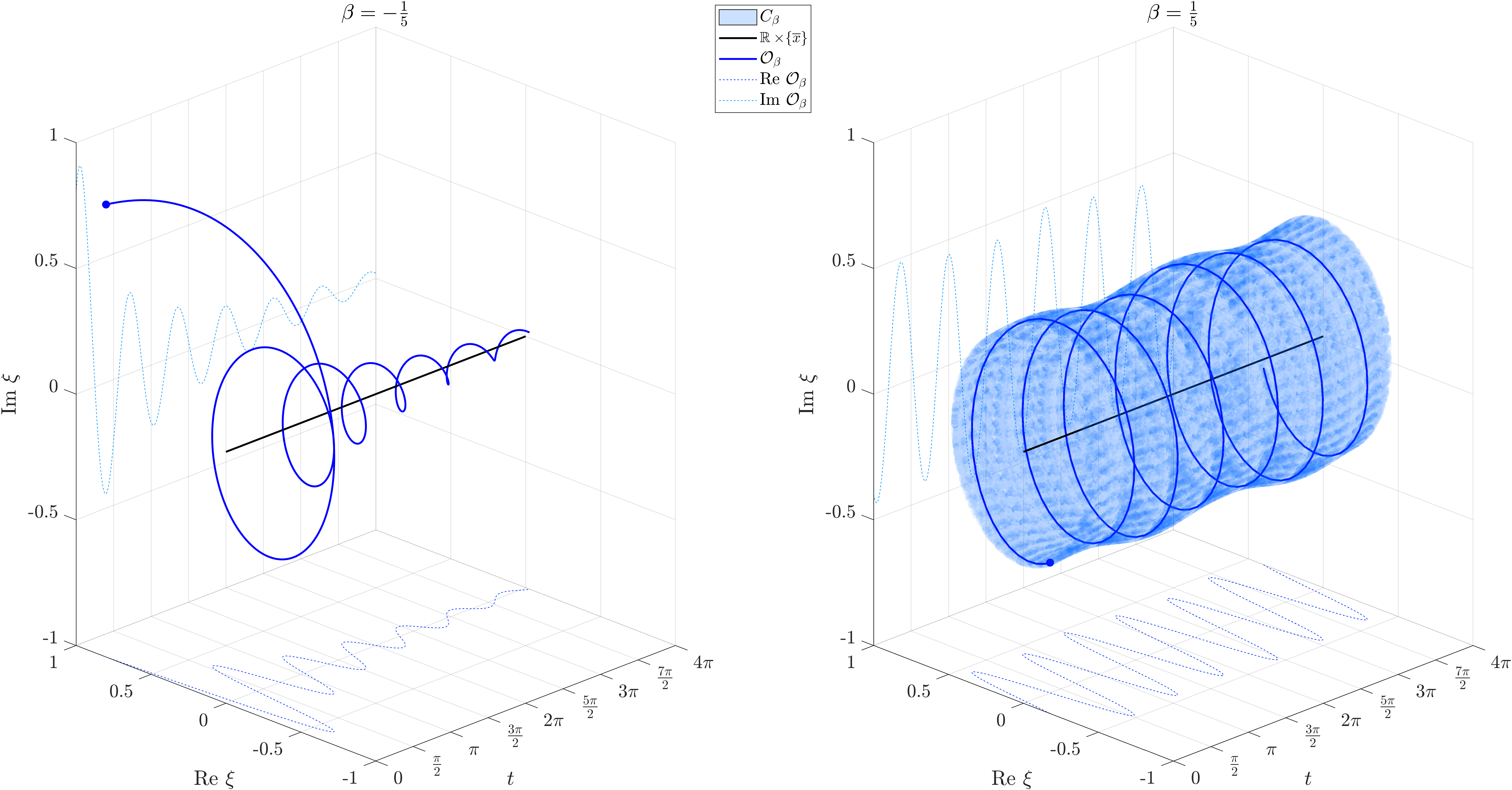}
    \caption{Forward orbits $\mathcal{O}_\beta$ of the nonlinearly periodically forced ODE \eqref{eq:HopfPFexample}, shown together with the invariant sets $C_\beta$ and $\mathbb{R} \times \{\overline{x}\}$ for various values of the unfolding parameter $\beta$, while keeping $\omega = \pi, c=-1$ and $N(t,\xi(t)) = \sin(t)$ fixed.}
    \label{fig:HopfNFexample}
\end{figure}

Let us now derive an explicit computational formula for the critical normal form coefficient $c$ for periodically forced DDEs. The 3-dimensional center manifold $\mathcal{W}_{\loc}^c(\overline{x})$ at the periodically forced nonresonant Hopf bifurcation can be parametrized locally near $\overline{x}$ as
\begin{equation*}
    \mathcal{H}(t,\xi,\bar{\xi}) = \overline{x} + \xi \varphi + \bar{\xi} \overline{\varphi} + \sum_{2 \leq j+k \leq 3} H_{jk}(t) \xi^j \bar{\xi}^k + \mathcal{O}(|\xi|^3), \quad t \in \mathbb{R}, \ \xi \in \mathbb{C},
\end{equation*}
where $H_{jk}$ are $T$-periodic and $H_{ij} = \overline{H_{ji}}$ so that $H_{11}$ is real. Filling everything into \eqref{eq:homological}, we obtain for the constant and linear terms the well-known identities \eqref{eq:xixi2 terms} and their complex conjugates. Collecting the $\xi^2$- and $\xi \bar{\xi}$-terms yield the nonsingular systems
\begin{equation*}
    (2i \omega I - \mathcal{A}^{\odot \star}) \iota H_{20} = B(\varphi,\varphi)r^{\odot \star}, \quad -\mathcal{A}^{\odot \star} \iota H_{11} = B(\varphi,\overline{\varphi})r^{\odot \star},
\end{equation*}
since we are in the setting of a nonresonant Hopf bifurcation, recall \Cref{prop:spectraequal}. Hence, these equations are solved using \Cref{cor:resolvent} to give
\begin{equation*}
    (H_{20})(t)(\theta) = e^{2i\omega \theta} \boldsymbol{\Delta}(2i\omega)^{-1}[B(\varphi,\varphi)](t+\theta) ,\quad (H_{11})(t)(\theta) = \boldsymbol{\Delta}(0)^{-1}[B(\varphi,\varphi)](t+\theta),
\end{equation*}
which can be explicitly computed using the Fourier series given in \eqref{eq:v0w0Fourier}. Collecting the $\xi^2 \bar{\xi}$-terms yields the singular system
\begin{equation*}
    (i\omega I - \mathcal{A}^{\odot \star}) \iota H_{21} = [C(\varphi,\varphi,\overline{\varphi}) + B(\overline{\varphi}, H_{20}) + 2B(\varphi,H_{11})]r^{\odot \star} - 2c \iota \varphi,
\end{equation*}
since $i\omega$ is an eigenvalue of $A$, recall \Cref{prop:spectraequal}. Applying \eqref{eq:FSC} yields eventually
\begin{equation*}
    \langle \boldsymbol{p},C(\varphi,\varphi,\overline{\varphi}) + B(\overline{\varphi}, H_{20}) + 2B(\varphi,H_{11}) \rangle_T - 2 c \langle \boldsymbol{\varphi}^\odot,\varphi \rangle_T = 0,
\end{equation*}
where $\boldsymbol{\varphi}^\odot = (\boldsymbol{p},\boldsymbol{g})$ is an adjoint eigenfunction of $\mathcal{A}^\star$ corresponding to $i \omega$. Since $\pm i \omega$ are the only eigenvalues of $A$ on the imaginary axis, \eqref{eq:Deltapmqm} in combination with the non-resonance condition $\omega_T/\omega \notin \mathbb{Q}$ tells us that the Fourier coefficients $q_m$ and $p_m$ of $\boldsymbol{q}$ and $\boldsymbol{p}$ satisfy $q_0 = q$ and $p_0 = p$ while $q_m = p_m = 0$ for $m \neq 0$. Thus, $\boldsymbol{q}(t)=q$ and $\boldsymbol{p}(t)=p$ for all $t \in \mathbb{R}$. As a consequence, the eigenfunction $\boldsymbol{\varphi}$ of $\mathcal{A}$ corresponding to $i \omega$ is given by $\boldsymbol{\varphi}(t) = \varphi = (\theta \mapsto e^{i \omega \theta}q)$ for all $t \in \mathbb{R}$. An application of \eqref{eq:pairingidentity} and \eqref{eq:hopfnormalization} shows that $\langle \boldsymbol\varphi^\odot, \varphi \rangle_T = p \Delta '(i \omega) q = 1$, and so we conclude from \eqref{eq:naturalpairings} and \eqref{eq:pairingT} that
\begin{equation} \label{eq:criticalcoeffHopf}
    c = \frac{1}{2} \langle p, C(\varphi,\varphi,\overline{\varphi}) + B(\overline{\varphi}, H_{20}) + 2B(\varphi,H_{11}) \rangle_T.
\end{equation}

\begin{remark} \label{remark:Hopf}
The expression in \eqref{eq:criticalcoeffHopf} naturally extends the formula for the critical normal form coefficient associated with Hopf bifurcations in autonomous DDEs In the latter case, where the right-hand side $F$ in \eqref{eq:DDE} does not depend on time, the maps $B$ and $C$ become time-independent and the elements involved in the pairing are constant. As a result, we obtain
\begin{equation} \label{eq:criticalcoeffHopfauto}
    c = \frac{1}{2} p [C(\varphi,\varphi,\overline{\varphi}) + B(\overline{\varphi}, H_{20}) + 2B(\varphi,H_{11})],
\end{equation}
which coincides exactly with the expression found in \cite[Section 6.1.1]{Bosschaert2020}. It is important to note that $\boldsymbol \Delta(z)$ reduces to  $\Delta(z)$ in this context, implying that both $H_{20}$ and $H_{11}$ are also constant in time. \hfill $\lozenge$
\end{remark}

\section{Examples} \label{sec:Examples}
The purpose of this section is to demonstrate how the explicit formulas \eqref{eq:criticalcoeffffold} and \eqref{eq:criticalcoeffHopf} can be employed in concrete examples of nonlinearly periodically forced DDEs. The first example (\Cref{subsec:examplefold}) considers a scalar model system exhibiting a periodically forced fold bifurcation and shows how a periodically forced cusp bifurcation arises as a codim 2 degeneracy. The second example (\Cref{subsec:Wright}) investigates a periodically forced version of Wright's equation, a classical system in delayed dynamics. We show how nonlinear periodic forcing modifies the bifurcation structure by affecting fold and Hopf bifurcations, inducing resonances and changes in the sign of the first Lyapunov coefficient.

\subsection{Periodically forced fold bifurcation in a model system} \label{subsec:examplefold}
Consider the periodically forced scalar DDE
\begin{equation}\label{eq:examplefold}
    \dot{x}(t) = \alpha_1 + \alpha_2 x(t) + g(t,x(t-1)), \quad t \geq 0,
\end{equation}
where $\alpha_{1,2} \in \mathbb{R}$ are parameters and $g \in C^2(\mathbb{R}^2,\mathbb{R})$ satisfies the following conditions: $g(\cdot,0)=0$ and $g$ is $T$-periodic in its first argument but has autonomous linear part in the sense that the map $t \mapsto D_2 g(t,0)$ is constant. Under these assumptions, \eqref{eq:examplefold} takes the form
\begin{equation} \label{eq:examplefoldalpha}
    \dot{x}(t) = \alpha_1 + \alpha_2x(t) + \beta_1 x(t-1) + \beta_2(t)[x(t-1)]^2 + \mathcal{O}([x(t-1)]^3), \quad t \geq 0,
\end{equation}
where the $\mathcal{O}$-terms are $T$-periodic. Here, $\beta_1 \coloneqq D_2g(t,0) \in \mathbb{R}$ as $g$ has autonomous linear part and $\beta_2 \coloneqq (t \mapsto \frac{1}{2}D_2^2g(t,0)) \in C_T(\mathbb{R},\mathbb{R})$ as $g \in C^2(\mathbb{R}^2,\mathbb{R})$ and $T$-periodic in its first argument.

Let us first study \eqref{eq:examplefoldalpha} at the parameter value $\alpha_1 = 0$. Then, $\overline{x} = 0$ is an equilibrium with associated characteristic matrix
\begin{equation*}
    \Delta(z,\alpha_2) = z - \alpha_2 - \beta_1 e^{-z} = -(\alpha_2+\beta_1) + (1+\beta_1)z - \frac{\beta_1}{2}z^2 + \mathcal{O}(z^3).
\end{equation*}
We note that $\lambda = 0$ is a simple root of the map $z \mapsto \Delta(z,\alpha_2)$ whenever
\begin{equation} \label{eq:foldcond}
    \alpha_2 = -\beta_1 \neq 1.
\end{equation}
In the special case $\alpha_2 = -\beta_1 = 1$, the root $\lambda = 0$ has algebraic multiplicity $2$, giving rise generically to a periodically forced Bogdanov-Takens bifurcation. This bifurcation has been further analysed in \cite[Section IX.10]{Diekmann1995} for the autonomous case of \eqref{eq:examplefold}, meaning that $g$ is $t$-independent. Under the assumption of \eqref{eq:foldcond}, we are at a fold bifurcation point with $\sigma_0 = \{0\}$. The null vectors of $\Delta(0,-\beta_1)$ satisfying \eqref{eq:foldnormalization} are given by $q = 1$ and $p = 1/(1+\beta_1)$ so that the eigenfunction $\varphi$ of $A$ reads $\varphi(\theta) = 1$ for all $\theta \in [-1,0]$. Our next aim is to compute the critical normal coefficient $b$ of the periodically forced fold bifurcation in \eqref{eq:examplefoldalpha} at $\alpha_1 = 0$. To evaluate \eqref{eq:criticalcoeffffold}, note that $B$ is of the form 
\begin{equation*}
    B(t;\psi_1(t),\psi_2(t)) = 2 \beta_2(t)[\psi_1(t)(-1) \psi_2(t)(-1)],
\end{equation*}
for all $\psi_{1,2} \in C_T(\mathbb{R},X)$. Using the formula \eqref{eq:criticalcoeffffold} for the critical normal form coefficient, we obtain
\begin{equation} \label{eq:bfoldexample}
    b = \frac{\overline{\beta_2}}{1+\beta_1} , \quad \overline{\beta_2} \coloneqq \frac{1}{T} \int_0^T \beta_2(t) dt,
\end{equation}
where $\overline{\beta_2}$ denotes the average value of $\beta_2$. Note that this formula for $b$ is well-defined due to the assumption \eqref{eq:foldcond}. If $\overline{\beta_2} \neq 0$, \Cref{prop:foldNF} tells us that the periodically forced fold bifurcation is nondegenerate, while a periodically forced cusp bifurcation generically occurs when $\overline{\beta_2} = 0$.

Let us now study \eqref{eq:examplefold} on the punctured fold bifurcation line \eqref{eq:foldcond} in the $(\alpha_2,\beta_1)$-plane, where $\alpha_1$ must be treated as an unfolding parameter. In this setting, \eqref{eq:examplefoldalpha} reads
\begin{equation} \label{eq:examplefoldalpha2}
    \dot{x}(t) = \alpha_1 + \beta_1 (x(t-1) - x(t)) + \beta_2(t)[x(t-1)]^2 + \mathcal{O}([x(t-1)]^3), \quad t \geq 0,
\end{equation}
where we recall that $\beta_1 \neq -1$. If $\overline{\beta_2} \neq 0$, then the fold bifurcation occurring at $\alpha_1 = 0$ is nondegenerate. The normal form analysis from \Cref{subsec:fold} tells us that there exists two $T$-periodic hyperbolic limit cycles $\Gamma_{\pm}$ of opposite stability for $\alpha_1 < 0$ that collide at $\alpha_1 = 0$ and disappear for $\alpha_1 > 0$. This behaviour is also confirmed numerically in \Cref{fig:foldDDE} for the truncated system \eqref{eq:examplefoldalpha2} with $\beta_1 = 1$ and $\beta_2(t) = 1 + \sin(t)$ as $\overline{\beta_2} = 1$ so that the critical normal form coefficient $b$ from \eqref{eq:bfoldexample} does not vanish.

\begin{figure}[ht]
    \centering
    \includegraphics[width=0.95\linewidth]{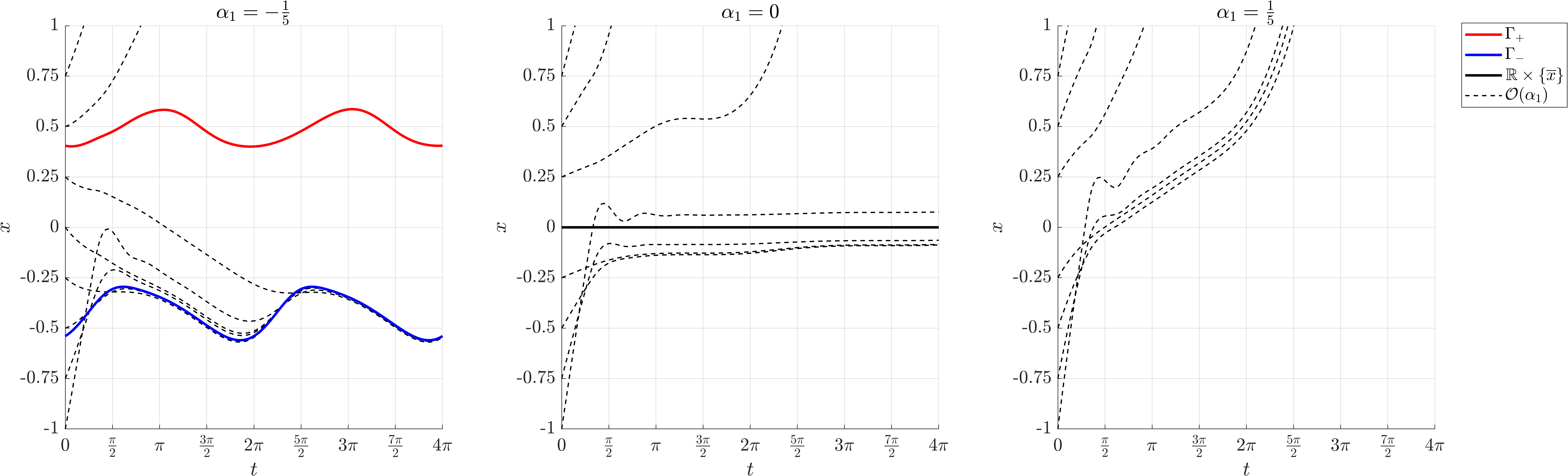}
    \caption{Illustration of a nondegenerate periodically forced fold bifurcation in the truncated scalar DDE \eqref{eq:examplefoldalpha2} with $\beta_1 = 1$ and $\beta_2(t) = 1 + \sin(t)$. Shown are the invariant sets $\Gamma_{\pm}$ ($2\pi$-periodic hyperbolic limit cycles) and $\mathbb{R} \times \{0\}$ (nonhyperbolic equilibrium) together with several forward orbits $\mathcal{O}(\alpha_1)$ that are computed with initial conditions $\varphi \in \{0,\pm\frac{1}{4}, \pm \frac{1}{2}, \pm \frac{3}{4}, \pm 1\} \subseteq X$.}
    \label{fig:foldDDE}
\end{figure}

\subsection{The periodically forced Wright equation} \label{subsec:Wright}

Consider the periodically forced scalar DDE
\begin{equation} \label{eq:wright}
    \dot{x}(t) = ax(t-1)[1+\beta(t)x(t)], \quad t \geq 0,
\end{equation}
where $a \in \mathbb{R}$ is a parameter and $\beta \in C_{T}(\mathbb{R},\mathbb{R})$ is given. This equation can be regarded as a periodically forced version of the well-studied Wright's equation ($\beta = 1$), see \cite{Lessard2010} and the references therein for more information. Clearly, $\overline{x} = 0$ is an equilibrium of \eqref{eq:wright} with associated characteristic matrix
\begin{equation} \label{eq:CharmatrixWright}
    \Delta(z,a) = z-ae^{-z} = -a + (1+a)z -\frac{a}{2}z^2 + \mathcal{O}(z^3).
\end{equation}
Let us now examine the periodically forced fold and Hopf bifurcations in \eqref{eq:wright}.

First, \eqref{eq:CharmatrixWright} has a unique trivial root if and only if $a = 0$. Under these conditions, we are at a fold bifurcation point with $\sigma_0= \{0\}$. The null vectors of $\Delta(0,0)$ satisfying \eqref{eq:foldnormalization} are given by $p = q = 1$ so that the eigenfunction $\varphi$ of $A$ reads $\varphi(\theta) = 1$ for all $\theta \in [-1,0]$. As $B = 0$, the critical normal form coefficient $b$ from \eqref{eq:criticalcoeffffold} vanishes and thus the periodically forced fold bifurcation is degenerate.

Second, \eqref{eq:CharmatrixWright} has a unique pair of simple roots on the imaginary axis if and only if $a = a_N$, where
\begin{equation*}
    a_N \coloneqq (-1)^{N+1} \omega_N, \quad \omega_N \coloneqq \frac{\pi}{2} + N \pi,
\end{equation*}
for some $N \in \mathbb{N}$. Under these conditions, we are at a Hopf bifurcation point with $\sigma_0 = \{ \pm i \omega_N\}$. The null vectors of $\Delta(i \omega_N,a_N)$ satisfying \eqref{eq:hopfnormalization} are given by $q = 1$ and $p = (1-i \omega_N)/(1+\omega_N^2)$ so that the eigenfunction $\varphi$ of $A$ reads $\varphi(\theta) = \exp(i\omega_N \theta)$ for all $\theta \in [-1,0]$. Since a nonresonant Hopf bifurcation requires $\omega_T / \omega_N \notin \mathbb{Q}$, we must assume that $T$ is irrational or not rationally related to $4/(2N +1)$. Our next aim is to compute the first Lyapunov coefficient $l_1$ for \eqref{eq:wright} under the aforementioned nonresonant Hopf bifurcation conditions. To evaluate \eqref{eq:criticalcoeffHopf}, note that $C = 0$ and $B$ is of the form
\begin{equation*}
    B(t;\psi_1(t),\psi_2(t)) = a_N \beta(t)[\psi_1(t)(0) \psi_2(t)(-1) + \psi_1(t)(-1) \psi_2(t)(0)],
\end{equation*}
for all $\psi_{1,2}\in C_T(\mathbb{R},X)$. Using the Fourier expansions from \eqref{eq:v0w0Fourier}, we obtain for $H_{20}$ and $H_{11}$ the expressions
\begin{equation*}
    H_{20}(t)(\theta) = \sum_{m \in \mathbb{Z}}H_{20}^m e^{i(m\omega_T t + (2 \omega_N + m \omega_T)\theta)}, \quad H_{11}(t)(\theta) = \sum_{m \in \mathbb{Z}}H_{11}^m e^{i m\omega_T  (t+\theta)}.
\end{equation*}
Here, $H_{20}^m = J(2 \omega_N +m \omega_T) \beta_m$ and $H_{11}^m = J(m \omega_T) \beta_m$, where $\beta_m$ denote the Fourier coefficients of the function $\beta$, and the map $J : \mathbb{R} \to \mathbb{C}$ is defined by
\begin{equation} \label{eq:J(x)}
    J(x) \coloneqq \frac{2 \omega_N (a_N \sin(x) + x - a_N \cos(x)i)}{a_N^2 + 2a_N x \sin(x) + x^2}.
\end{equation}
Hence, we obtain for all $t \in \mathbb{R}$ the expressions
\begin{align*}
    B(t;\overline{\varphi},H_{20}(t)) &= \omega_N \beta(t) \sum_{m \in \mathbb{Z}} ((-1)^N e^{-im \omega_T} -  i)H_{20}^m e^{im \omega_T t}, \\
    B(t; \varphi,H_{11}(t)) &= -\omega_N \beta(t) \sum_{m \in \mathbb{Z}} ((-1)^N e^{-im \omega_T} -  i)H_{11}^m e^{im \omega_T t},
\end{align*}
meaning that the second component in the pairing of the critical normal form coefficient $c$ from \eqref{eq:criticalcoeffHopf} takes the form
\begin{equation*}
    \omega_N \beta(t) \sum_{m \in \mathbb{Z}}\beta_m J_m((-1)^N e^{-im \omega_T}-i)  e^{im \omega_T t},
\end{equation*}
where $J_m \coloneqq J(2\omega_N + m \omega_T) -2 J(m\omega_T)$ for all $m \in \mathbb{Z}$. Filling everything into \eqref{eq:criticalcoeffHopf} while using the product formula for Fourier series yields eventually
\begin{equation*}
    l_1 = \frac{1}{2(1+\omega_N^2)} \sum_{m \in \mathbb{Z}} \bigg((-1)^N \cos(m\omega_T)(\Re J_m + \omega_N \Im J_m) + (\sin(m\omega_T)+1)(\Im J_m - \omega_N \Re J_m)\bigg)\beta_{-m}.
\end{equation*}

\begin{example} \label{ex:Wright0}
Consider the standard Wright equation $(\beta = 1)$. Then, $\beta_0 = 1$ and $\beta_m = 0$ for $m \neq 0$ so that $J_0 = (4-22(-1)^Ni)/5$. Hence,
\begin{equation} \label{eq:l1wright}
    l_1 = -\frac{1}{5(1+\omega_N^2)}\bigg(9(-1)^{N} + 13 \omega_N\bigg) < 0, \quad \forall N \in \mathbb{N},
\end{equation}
meaning that each Hopf bifurcation is nondegenerate and supercritical. The well-known formula \eqref{eq:criticalcoeffHopfauto} also confirms this result directly. Furthermore, our derivation aligns exactly with existing literature regarding the sign of $l_1$ in Wright's equation, see \cite{Balazs2020,Faria1995,Faria1997,Chow1977}. \hfill $\lozenge$    
\end{example}

\begin{example} \label{ex:Wrightbeta}
Consider the periodically forced Wright equation \eqref{eq:wright} with $\beta(t) = \Omega_1\cos(\Omega_2 t)$ for all $t \in \mathbb{R}$, where $\Omega_{1,2} > 0$ are parameters so that $T = 2\pi/\Omega_2$ and $\omega_T = \Omega_2$. By inspecting the roots of $z \mapsto \Delta(z,a)$, the principle of linearized stability implies that the zero solution of \eqref{eq:wright} is asymptotically stable precisely for $a \in \left(-\tfrac{\pi}{2},0\right)$, see also ``$0$-(Un)stable Region'' in \Cref{fig:BifDiagram}. Furthermore, recall that $a = 0$ corresponds to a degenerate fold bifurcation for all $\Omega_{1,2}$, which is also illustrated in \Cref{fig:BifDiagram}. 

\begin{figure}[ht]
    \centering
    \includegraphics[width=0.95\linewidth]{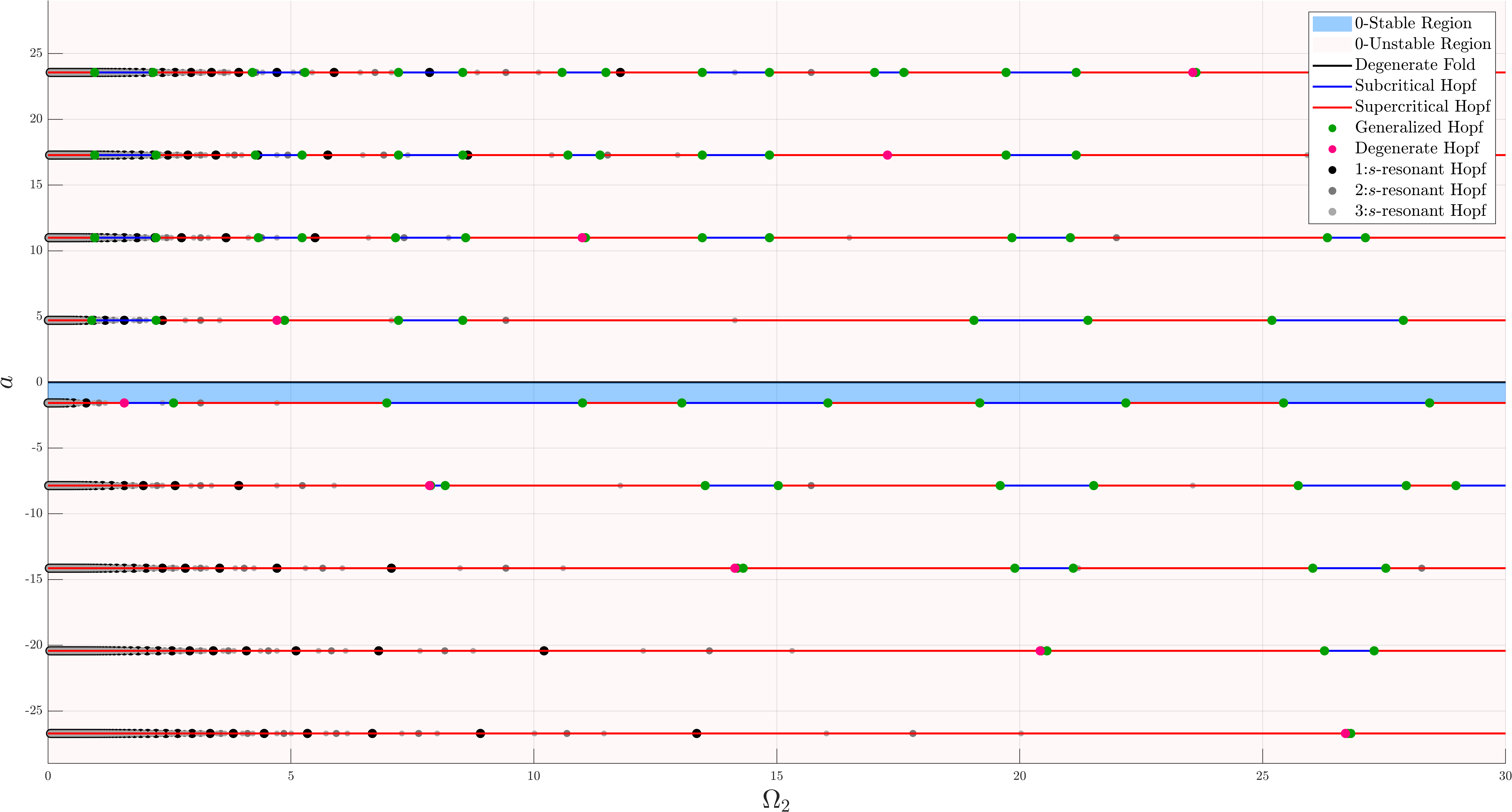}
    \caption{Bifurcation diagram of the nonlinearly periodically forced Wright equation \eqref{eq:wright} with forcing function $\beta(t) = \cos(\Omega_2t)$ and unfolding parameters $(\Omega_2,a)$. Horizontal lines with $a \neq 0$ correspond to different Hopf branches.}
    \label{fig:BifDiagram}
\end{figure}

With this choice of $\beta$, note that \eqref{eq:wright} undergoes a $r\!:\!s$-resonant Hopf bifurcation on the $N$th Hopf branch if $\Omega_2 = \omega_N\frac{r}{s}$. The strongly resonant Hopf points $(r\in \{1,2,3\} \text{ and } s \in \mathbb{N}_0)$ are displayed in \Cref{fig:BifDiagram}. On the other hand, if $\Omega_2 \notin \omega_N \mathbb{Q}$ on the $N$th Hopf branch, then a nonresonant Hopf bifurcation occurs so that we can compute the first Lyapunov coefficient $l_1$ using the results from \Cref{subsec:hopf}. In this setting, it is clear that $\beta_{\pm 1} = \Omega_1/2$ and $\beta_m = 0$ for $m \neq \pm1$ so that $J_{\pm1} = J(2\omega_N \pm \Omega_2) - 2 J(\pm\Omega_2)$. Hence, the first Lyapunov coefficient on the $N$th Hopf branch reads
\begin{align}
\begin{split} \label{eq:l1PF}
    l_1 &= \frac{\Omega_1}{4(1+\omega_N^2)} \bigg((-1)^N \cos(\Omega_2)[\Re (J_{-1} + J_1) + \omega_N \Im (J_{-1} + J_1)] \\
    &+ (1-\sin(\Omega_2))[\Im J_{-1} - \omega_N \Re J_{-1}] + (1+\sin(\Omega_2))[\Im J_{1} - \omega_N \Re J_{1}]\bigg).
\end{split}
\end{align}
Since $\Omega_1 > 0$, the sign of $l_1$ does not depend on the specific value of $\Omega_1$. We therefore fix $\Omega_1 = 1$ from now on and consider $l_1$ from \eqref{eq:l1PF} as a function of $\Omega_2 > 0$ along a fixed Hopf branch. A straightforward calculation shows that $l_1(\Omega_2) \to l_1^0$ as $\Omega_2 \downarrow 0$, where $l_1^0$ denotes the first Lyapunov coefficient of the standard Wright equation $(\beta = 1)$ given in \eqref{eq:l1wright}. By continuity of $l_1$, we conclude that each nonresonant Hopf bifurcation on each Hopf branch is supercritical for sufficiently small $\Omega_2 > 0$ not belonging to $\omega_N\mathbb{Q}$. This behaviour is also confirmed (numerically) in \Cref{fig:BifDiagram}. 

For larger values of $\Omega_2 >0$ along a fixed Hopf branch, $l_1$ may change sign. This can occur in two distinct ways. First, if $l_1$ passes through zero, the system \eqref{eq:wright} undergoes a generalized Hopf bifurcation, a scenario that has been extensively analysed for autonomous DDEs in \cite{Bosschaert2020,Delmeire2025,Janssens2010}. These codim 2 bifurcation points are computed numerically and illustrated in \Cref{fig:BifDiagram}. Second, if the sign change arises due to a vertical asymptote of $l_1$, then \eqref{eq:wright} undergoes a \emph{degenerate Hopf} bifurcation, indicating a breakdown of the normal form (approximation) from \Cref{prop:HopfPF}. These (codim 2) bifurcation points can be computed explicitly and are illustrated in \Cref{fig:BifDiagram}. Indeed, for $l_1$ to exhibit a vertical asymptote, it is necessary that $w(x) \coloneqq a_N^2 + 2 a_N x \sin(x) + x^2$ vanishes at one of the $x \in \{\pm \Omega_2, 2\omega_N \pm \Omega_2\}$. Substituting these specific values into $w$ reveals that this occurs only on the $N$th Hopf branch at $\Omega_2 = \omega_N$, indicating a $1\!:\!1$-resonant Hopf bifurcation. However, in contrast to the previously computed $1\!:\!s$-resonant Hopf points for all $s \in \mathbb{N}$, the sign of $l_1$ changes at this point, which is not a generic scenario. 
\hfill $\lozenge$
\end{example}

We observed in \Cref{ex:Wrightbeta} that the periodically forced Wright equation reduces to the classical Wright equation from \Cref{ex:Wright0} as $\Omega_2 \downarrow 0$, while its bifurcation structure is preserved. In both cases, recall that a Hopf bifurcation occurs at the parameter value $a = a_N$ for every $N \in \mathbb{N}$. Since the trivial solution is asymptotically stable for $(\Omega_2,a) \in [0,\infty) \times \left(-\tfrac{\pi}{2},0\right)$, the supercritical nature of a nonresonant Hopf bifurcation can also be verified numerically by computing forward orbits of \eqref{eq:wright} for nonresonant values of $\Omega_2$, with $a$ chosen close to $-\tfrac{\pi}{2}$. When crossing a supercritical nonresonant Hopf bifurcation, recall from \Cref{subsec:hopf} that a nonautonomous stable invariant set exists for parameter values slightly below $-\tfrac{\pi}{2}$. At $\Omega_2 = 0$ this invariant set is an exact cylinder, whereas for $\Omega_2 > 0$ it deforms into a distorted cylinder. This theoretical result is also confirmed numerically in \Cref{fig:WrightHopf}, where $C(\Omega_2,a)$ denotes the projection of this cylindrical surface from $\mathbb{R} \times X$ onto the $(t,x(t),x(t-1))$-space.

\begin{figure}[ht]
    \centering
    \includegraphics[width=0.95\linewidth]{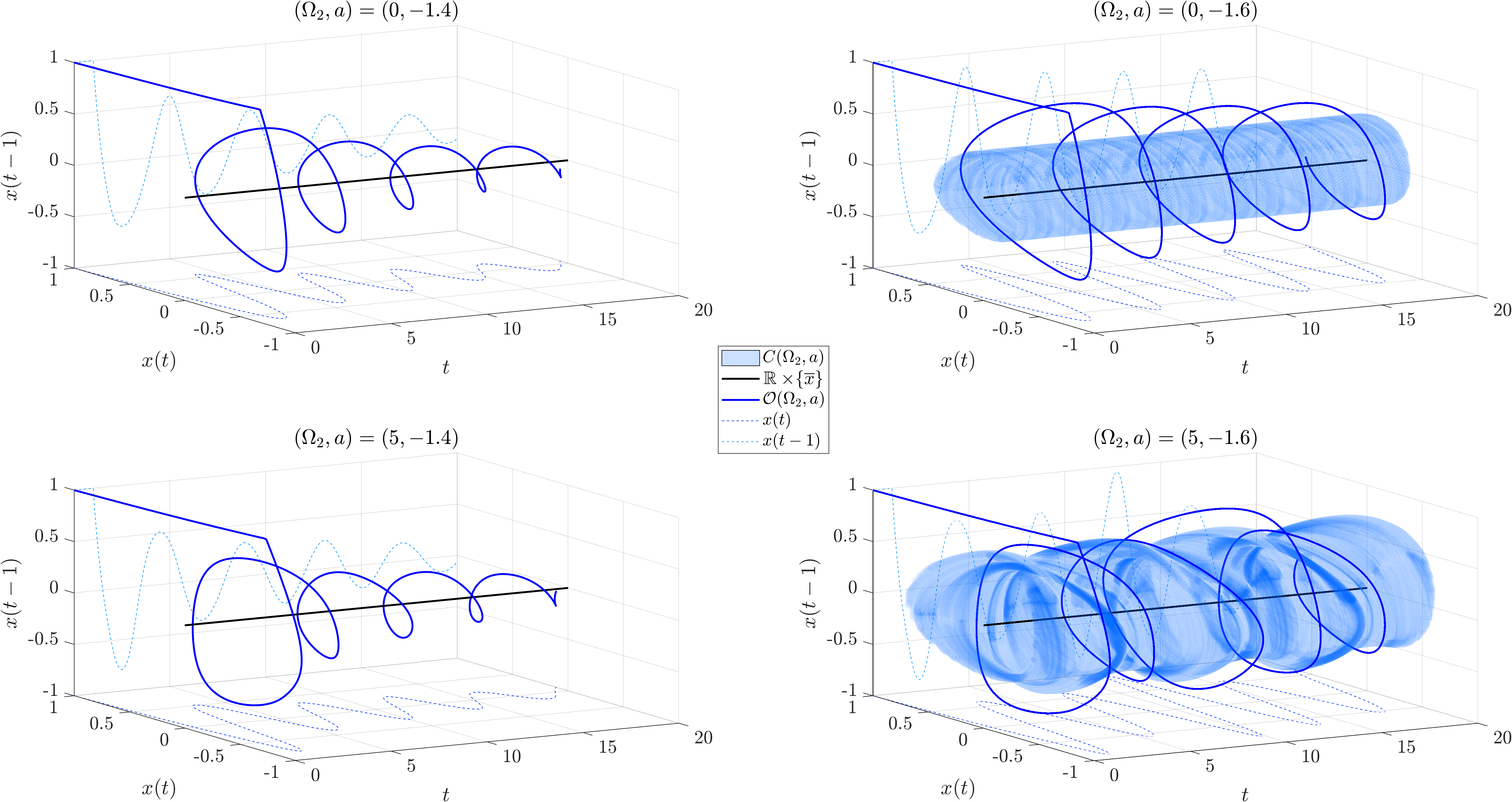}
    \caption{Several forward orbits $\mathcal{O}(\Omega_2,a)$ computed using the initial condition $\varphi=1 \in X$, together with the invariant sets $\mathbb{R} \times \{\overline{x}\}$ and $C(\Omega_2,a)$, illustrated for various parameter values $(\Omega_2,a)$ of the periodically forced Wright equation \eqref{eq:wright} with forcing function $\beta(t)=\cos(\Omega_2 t)$, shown in $(t,x(t),x(t-1))$ space.}
    \label{fig:WrightHopf}
\end{figure}

\section{Conclusion and outlook} \label{sec:outlook}
We developed a general framework for analysing bifurcations of equilibria in nonlinearly periodically forced DDEs. Our approach combines the construction of periodic center manifolds, the derivation of periodically forced normal forms, and a normalization method for computing (critical) normal form coefficients. We provided a rigorous center manifold theorem within the sun-star calculus framework and demonstrated that the resulting normal forms are structurally identical to those for periodically forced ODEs. In particular, we derived explicit computational formulas for the critical normal form coefficients of the fold and nonresonant Hopf bifurcation.

A natural next step is to extend the present framework to bifurcations in parameter-dependent nonlinearly periodically forced DDEs. This requires the development of a parameter-dependent center manifold and normal form theory in the spirit of \cite[Section 3.5]{Bosschaert2020} and \cite[Theorem 3.5.2]{Haragus2011}. This approach enables the derivation of explicit computational formulas for the critical and parameter-dependent normal form coefficients associated with all codim 1 and 2 bifurcations of equilibria \cite{Janssens2010,Bosschaert2020,Delmeire2025}. In particular, it facilitates the computation of normal form coefficients for the (strongly) resonant Hopf bifurcations, which do not appear in autonomous DDEs. For example, this would enable us to extend the bifurcation diagram presented in \Cref{fig:BifDiagram} massively. After completing this task, it would be valuable to derive predictors for (secondary) codim 1 curves (of limit cycles) emanating from codim 2 points, as discussed in \cite{Bosschaert2025,Bosschaert2024a} for autonomous DDEs. This would enable the study of bifurcations of such limit cycles, requiring a generalization of the results in \cite{Article1,Chicone1997,Article2,Bosschaert2025} towards nonlinearly periodically forced DDEs. We expect interesting types of dynamics to arise here, as the period of the bifurcating cycle may resonate with that of the nonlinearity. The ultimate objective is to incorporate all these formulas for both periodically forced ODEs and DDEs into software packages such as \verb|MatCont| \cite{Dhooge2003,Dhooge2008}, \verb|DDE-BifTool| \cite{Engelborghs2002,Sieber2014} or \verb|BifurcationKit| \cite{Veltz2020}.

The techniques developed in this work are broadly applicable and naturally invite further generali\-zations. Since the center manifold theorem (\Cref{thm:LCMT}) is formulated within the abstract framework of dual perturbation theory, it is plausible that the periodically forced normal form result (\Cref{thm:periodicnormalform}) remains valid in more and diverse general settings. However, depending on the type of nonlinearly periodically forced evolution equation under consideration, one may need to relax the $C^k$-smoothness assumption inherent to classical DDEs. For instance, one could consider piecewise $C^k$-smoothness in the context of impulsive systems \cite{Church2021} or Sobolev-type regularity such as $W^{k,p}$-smoothness for DDEs on $L^p$-spaces \cite{Batkai2005} or partial delay differential equations \cite{Wu1996}. In the same spirit, we anticipate that the normalization method proposed here can be extended to a broader class of nonlinearly periodically forced systems. Notably, the derived critical normal form coefficients \eqref{eq:criticalcoeffffold} and \eqref{eq:criticalcoeffHopf} are expressed in an $L^2$-style formulation, as evidenced by the bilinear pairing $\langle \cdot , \cdot \rangle_T$ introduced in \eqref{eq:pairingT2}. This structure suggests that our formulas remain meaningful under weaker (regularity) assumptions, such as those arising in the analysis of renewal equations, neutral equations, delay equations with state-dependent or infinite delay, abstract delay equations, partial delay differential equations, or even mixed functional differential equations.

\section*{Acknowledgements}
The authors thank Prof. A. J. Roberts (University of Adelaide) for insightful comments on an earlier version of this manuscript.

\bibliographystyle{siamplain}
\bibliography{references}

@Book{Kuznetsov2023a,
    AUTHOR = {Kuznetsov, {\mbox {Yu}}. A.},
     TITLE = {Elements of {A}pplied {B}ifurcation {T}heory},
    SERIES = {Applied Mathematical Sciences},
    VOLUME = {112},
   EDITION = {4th},
 PUBLISHER = {Springer, Cham},
      YEAR = {2023},
     PAGES = {xxvi+703},
      ISBN = {978-3-031-22006-7; 978-3-031-22007-4},
       DOI = {10.1007/978-3-031-22007-4},
    groups = {Books - Standard references},
}

@InCollection{Verhulst2009,
  author    = {Verhulst, Ferdinand},
  pages     = {6625--6639},
  publisher = {Springer New York},
  title     = {Perturbation Analysis of Parametric Resonance},
  year      = {2009},
  isbn      = {9780387304403},
  booktitle = {Encyclopedia of Complexity and Systems Science},
  doi       = {10.1007/978-0-387-30440-3\_393},
  groups    = {ODE - Bifurcations},
}

@Book{Tondl2000,
  author    = {Ales Tondl},
  publisher = {Cambridge Univ. Press},
  title     = {Autoparametric Resonance in Mechanical Systems},
  year      = {2000},
  isbn      = {0521650798},
}

@InCollection{Faria2006,
  author    = {Faria, T.},
  pages     = {227--282},
  publisher = {Springer Netherlands},
  title     = {Normal forms and Bifurcations for Delay Differential Equations},
  isbn      = {9781402036453},
  booktitle = {Delay Differential Equations and Applications},
  doi       = {10.1007/1-4020-3647-7\_7},
  groups    = {ODE - Bifurcations},
  year      = {2006}
}

@Misc{Delmeire2025,
  author    = {Delmeire, N. A. M. and Bosschaert, M. M. and Kuznetsov, {\mbox{Yu}}. A.},
  title     = {Bifurcation Analysis of Generalized {H}opf Bifurcation in Ordinary and Delay Differential Equations},
  year      = {2025},
  copyright = {Creative Commons Attribution 4.0 International},
  doi       = {10.48550/ARXIV.2507.18469},
  groups    = {ODE - Bifurcations},
  keywords  = {Dynamical Systems (math.DS), FOS: Mathematics, FOS: Mathematics, 37G05, 37G10, 34A12, 34K18 37G05, 37G10, 34A12, 34K18 37G05, 37G10, 34A12, 34K18},
  publisher = {arXiv},
}

@Article{Verhulst2002,
  author    = {Verhulst, Ferdinand},
  journal   = {Acta Applicandae Mathematica},
  title     = {Parametric and Autoparametric Resonance},
  year      = {2002},
  issn      = {1572-9036},
  month     = jan,
  number    = {1–3},
  pages     = {231--264},
  volume    = {70},
  doi       = {10.1023/a:1013934501001},
  groups    = {ODE - Bifurcations},
  publisher = {Springer Science and Business Media LLC},
}

@Article{Hoveijn1995,
  author    = {Hoveijn, I. and Ruijgrok, M.},
  journal   = {ZAMP Zeitschrift fur angewandte Mathematik und Physik},
  title     = {The stability of parametrically forced coupled oscillators in sum resonance},
  year      = {1995},
  issn      = {1420-9039},
  month     = may,
  number    = {3},
  pages     = {384--392},
  volume    = {46},
  doi       = {10.1007/bf01003557},
  groups    = {ODE - Bifurcations},
  publisher = {Springer Science and Business Media LLC},
}

@Book{Haragus2011,
  author    = {Mariana Haragus and G{\'{e}}rard Iooss},
  publisher = {Springer London},
  title     = {Local Bifurcations, Center Manifolds, and Normal Forms in Infinite-Dimensional Dynamical Systems},
  year      = {2011},
  comment   = {Center manifold construction in infinite dimensional system, may be helpful!!!},
  doi       = {10.1007/978-0-85729-112-7},
  file      = {:C\:/Users/Bram Lentjes/Desktop/Utrecht University/Jaar 2/Masterthesis/Literature/Center Manifold Construction/Local Bifurcations, Center Manifolds, and Normal Forms in Infinite-Dimensional Dynamical Systems Ioos.pdf:PDF},
  groups    = {Center Manifolds and Normalisation ODEs, Books - Standard references},
}

@Article{Gambaudo1985,
  author    = {Gambaudo, J.M},
  journal   = {Journal of Differential Equations},
  title     = {Perturbation of a {H}opf bifurcation by an external time-periodic forcing},
  year      = {1985},
  issn      = {0022-0396},
  month     = apr,
  number    = {2},
  pages     = {172--199},
  volume    = {57},
  doi       = {10.1016/0022-0396(85)90076-2},
  groups    = {ODE - Bifurcations},
  publisher = {Elsevier BV},
}

@Article{Zhang2011,
  author    = {Zhang, Yanyan and Golubitsky, Martin},
  journal   = {SIAM Journal on Applied Dynamical Systems},
  title     = {Periodically Forced {H}opf Bifurcation},
  year      = {2011},
  issn      = {1536-0040},
  month     = jan,
  number    = {4},
  pages     = {1272--1306},
  volume    = {10},
  doi       = {10.1137/10078637x},
  groups    = {ODE - Bifurcations},
  publisher = {Society for Industrial & Applied Mathematics (SIAM)},
}

@Book{Neerven1992,
  author    = {van Neerven, Jan},
  publisher = {Springer Berlin Heidelberg},
  title     = {The Adjoint of a Semigroup of Linear Operators},
  year      = {1992},
  isbn      = {9783540474975},
  doi       = {10.1007/bfb0085008},
  groups    = {Books - Standard references},
  issn      = {1617-9692},
  journal   = {Lecture Notes in Mathematics},
}

@Book{Diekmann1995,
  author    = {Odo Diekmann and Sjoerd M. {Verduyn Lunel} and Stephan A. {van Gils} and Hanns-Otto Walther},
  publisher = {Springer New York},
  title     = {Delay Equations},
  year      = {1995},
  doi       = {10.1007/978-1-4612-4206-2},
  file      = {:C\:/Users/Bram Lentjes/Desktop/Utrecht University/Jaar 2/Masterthesis/Literature/Standard references for DDEs/Odo Diekmann - Delay equations with TOC.pdf:PDF},
  groups    = {Standard References DDEs, DDE - General Theory},
}

@Article{Clement1987,
  author    = {Ph. Clément and O. Diekmann and M. Gyllenberg and H. J. A. M. Heijmans and H. R. Thieme},
  journal   = {Mathematische Annalen},
  title     = {Perturbation theory for dual semigroups {I}. the sun-reflexive case},
  year      = {1987},
  month     = {aug},
  number    = {4},
  pages     = {709--725},
  volume    = {277},
  doi       = {10.1007/bf01457866},
  groups    = {Standard References DDEs, DDE - General Theory},
  publisher = {Springer Science and Business Media {LLC}},
}

@Article{Clement1988,
  author    = {Ph. Cl{\'{e}}ment and O. Diekmann and M. Gyllenberg and H. J. A. M. Heijmans and H. R. Thieme},
  journal   = {Proceedings of the Royal Society of Edinburgh: Section A Mathematics},
  title     = {Perturbation theory for dual semigroups {II}. Time-dependent perturbations in the sun-reflexive case},
  year      = {1988},
  number    = {1-2},
  pages     = {145--172},
  volume    = {109},
  doi       = {10.1017/s0308210500026731},
  groups    = {Standard References DDEs, DDE - General Theory},
  publisher = {Cambridge University Press ({CUP})},
}

@Book{Wu1996,
  author    = {Wu, Jianhong},
  publisher = {Springer New York},
  title     = {Theory and Applications of Partial Functional Differential Equations},
  year      = {1996},
  isbn      = {9781461240501},
  doi       = {10.1007/978-1-4612-4050-1},
  groups    = {Books - Standard references},
  issn      = {0066-5452},
  journal   = {Applied Mathematical Sciences},
}

@Book{Batkai2005,
  author    = {Batkai, Andras and Piazzera, Susanna},
  month     = {5 September},
  title     = {Semigroups for Delay Equations},
  year      = {2005},
  doi       = {10.1201/9781439865682},
  groups    = {DDE - General Theory},
  isbn      = {9780429064173},
  publisher = {A K Peters/CRC Press},
    address = {New York},
}

@InProceedings{Clement1989,
  author    = {Clément, Philippe and Diekmann, Odo and Gyllenberg, M. and Heijmans, H. J. A. M. and Thieme, H. R.},
  booktitle = {Proceedings of {V}olterra Integrodifferential Equations in {B}anach Spaces and Applications 1987},
  title     = {Perturbation theory for dual semigroups. {III}. Nonlinear {L}ipschitz continuous perturbations in the sun-reflexive case},
  year      = {1989},
  groups    = {Standard References DDEs, DDE - General Theory},
}

@Article{Clement1989a,
  author  = {Ph. Clément and O. Diekmann and M. Gyllenberg and H. J. A. M. Heijmans and H. R. Thieme},
  journal = {Trends in Semigroup Theory and Applications},
  title   = {Perturbation theory for dual semigroups {IV}. The interwining formula and the canonical pairing},
  year    = {1989},
  groups  = {Standard References DDEs, DDE - General Theory},
}

@Book{Taylor1986,
  author    = {A.E. Taylor and D.C. Lay},
  publisher = {Krieger},
  title     = {Introduction to Functional Analysis},
  year      = {1986},
  isbn      = {9780898749519},
  groups    = {Books - Standard references},
}

@Article{Vance1991,
  author    = {Vance, William N. and Ross, John},
  journal   = {Chaos: An Interdisciplinary Journal of Nonlinear Science},
  title     = {Bifurcation structures of periodically forced oscillators},
  year      = {1991},
  issn      = {1089-7682},
  month     = dec,
  number    = {4},
  pages     = {445--453},
  volume    = {1},
  doi       = {10.1063/1.165854},
  groups    = {ODE - Bifurcations},
  publisher = {AIP Publishing},
}

@Article{Namachchivaya1987,
  author    = {Namachchivaya, N. Sri and Ariaratnam, S. T.},
  journal   = {SIAM Journal on Applied Mathematics},
  title     = {Periodically Perturbed {H}opf Bifurcation},
  year      = {1987},
  issn      = {1095-712X},
  month     = feb,
  number    = {1},
  pages     = {15--39},
  volume    = {47},
  doi       = {10.1137/0147002},
  groups    = {ODE - Bifurcations},
  publisher = {Society for Industrial & Applied Mathematics (SIAM)},
}

@Article{Bajaj1986,
  author    = {Bajaj, A.K},
  journal   = {Journal of Mathematical Analysis and Applications},
  title     = {Resonant parametric perturbations of the {H}opf bifurcation},
  year      = {1986},
  issn      = {0022-247X},
  month     = apr,
  number    = {1},
  pages     = {214--224},
  volume    = {115},
  doi       = {10.1016/0022-247x(86)90035-1},
  groups    = {ODE - Bifurcations},
  publisher = {Elsevier BV},
}

@InProceedings{Diekmann1991,
  author    = {Diekmann, Odo and Gyllenberg, Mats and Thieme, Horst R.},
  booktitle = {Semigroup theory and evolution equations: the Second International Conference},
  title     = {Perturbation theory for dual semigroups. {V}: Variation of constants formulas},
  year      = {1991},
  number    = {135},
  pages     = {107--123},
  publisher = {Marcel Dekker Incorporated},
  series    = {Lecture Notes in Pure and Applied Mathematics},
  groups    = {Standard References DDEs, DDE - General Theory},
  isbn      = {0-8247-8545-2},
  school    = {Luleå University of Technology},
}

@InCollection{Diekmann1991CM,
  author    = {Diekmann, Odo and {van Gils}, Stephan A.},
  pages     = {122--141},
  publisher = {Springer Berlin Heidelberg},
  title     = {The center manifold for delay equations in the light of suns and stars},
  year      = {1991},
  isbn      = {9783540470472},
  booktitle = {Singularity Theory and its Applications},
     editor = {Roberts, Mark and Stewart, Ian},
  doi       = {10.1007/bfb0085429},
  groups    = {DDE - (Center) Manifolds},
  issn      = {1617-9692},
}

@Article{Faria1995,
  author    = {Faria, T. and Magalhaes, L.T.},
  journal   = {Journal of Differential Equations},
  title     = {Normal Forms for Retarded Functional Differential Equations with Parameters and Applications to {H}opf Bifurcation},
  year      = {1995},
  issn      = {0022-0396},
  month     = nov,
  number    = {2},
  pages     = {181--200},
  volume    = {122},
  doi       = {10.1006/jdeq.1995.1144},
  groups    = {DDE - Bifurcations},
  publisher = {Elsevier BV},
}

@Book{Hale1993,
  author    = {Jack K. Hale and Sjoerd M. {Verduyn Lunel}},
  publisher = {Springer New York},
  title     = {Introduction to Functional Differential Equations},
  year      = {1993},
  doi       = {10.1007/978-1-4612-4342-7},
  groups    = {Books - Standard references},
}

@Article{Lessard2010,
  author    = {Lessard, Jean-Philippe},
  journal   = {Journal of Differential Equations},
  title     = {Recent advances about the uniqueness of the slowly oscillating periodic solutions of {W}right’s equation},
  year      = {2010},
  issn      = {0022-0396},
  month     = mar,
  number    = {5},
  pages     = {992--1016},
  volume    = {248},
  doi       = {10.1016/j.jde.2009.11.008},
  groups    = {DDE - Bifurcations},
  publisher = {Elsevier BV},
}

@Article{Chow1977,
  author    = {Chow, Shui-Nee and Mallet-Paret, John},
  journal   = {Journal of Differential Equations},
  title     = {Integral averaging and bifurcation},
  year      = {1977},
  issn      = {0022-0396},
  month     = oct,
  number    = {1},
  pages     = {112--159},
  volume    = {26},
  doi       = {10.1016/0022-0396(77)90101-2},
  groups    = {DDE - Bifurcations},
  publisher = {Elsevier BV},
}

@Article{Dhooge2003,
  author    = {A. Dhooge and W. Govaerts and {\mbox{Yu}}. A. Kuznetsov},
  journal   = {{ACM} Transactions on Mathematical Software},
  title     = {{M}at{C}ont},
  year      = {2003},
  month     = {jun},
  number    = {2},
  pages     = {141--164},
  volume    = {29},
  doi       = {10.1145/779359.779362},
  groups    = {Numerical Implementation, ODE - Numerics/Software},
  publisher = {Association for Computing Machinery ({ACM})},
}

@Book{Kloeden2020,
  author    = {Kloeden, Peter E and Yang, Meihua},
  publisher = {World Scientific},
  title     = {An Introduction to Nonautonomous Dynamical Systems and their Attractors},
  year      = {2020},
  isbn      = {9789811228667},
  month     = sep,
  doi       = {10.1142/12053},
  groups    = {Books - Standard references},
  issn      = {1793-1355},
  journal   = {Interdisciplinary Mathematical Sciences},
}

@Book{Caraballo2016,
  author    = {Caraballo, Tomás and Han, Xiaoying},
  publisher = {Springer International Publishing},
  title     = {Applied Nonautonomous and Random Dynamical Systems},
  year      = {2016},
  isbn      = {9783319492476},
  doi       = {10.1007/978-3-319-49247-6},
  groups    = {Books - Standard references},
  issn      = {2191-8201},
  journal   = {SpringerBriefs in Mathematics},
}

@Article{Dhooge2008,
  author    = {Dhooge, A. and Govaerts, W. and Kuznetsov, {\mbox{Yu}}. A. and Meijer, H. G. E. and Sautois, B.},
  journal   = {Mathematical and Computer Modelling of Dynamical Systems},
  title     = {New features of the software {M}at{C}ont for bifurcation analysis of dynamical systems},
  year      = {2008},
  issn      = {1744-5051},
  month     = apr,
  number    = {2},
  pages     = {147--175},
  volume    = {14},
  doi       = {10.1080/13873950701742754},
  publisher = {Informa UK Limited},
}

@Article{Faria1995a,
  author    = {Faria, T. and Magalhaes, L.T.},
  journal   = {Journal of Differential Equations},
  title     = {Normal Forms for Retarded Functional Differential Equations and Applications to {B}ogdanov-{T}akens Singularity},
  year      = {1995},
  issn      = {0022-0396},
  month     = nov,
  number    = {2},
  pages     = {201--224},
  volume    = {122},
  doi       = {10.1006/jdeq.1995.1145},
  groups    = {DDE - Bifurcations},
  publisher = {Elsevier BV},
}

@Article{Chicone1997,
  author    = {Chicone, C. and Latushkin, Y.},
  journal   = {Journal of Differential Equations},
  title     = {Center Manifolds for Infinite Dimensional Nonautonomous Differential Equations},
  year      = {1997},
  issn      = {0022-0396},
  month     = dec,
  number    = {2},
  pages     = {356--399},
  volume    = {141},
  doi       = {10.1006/jdeq.1997.3343},
  groups    = {EVO & OPT - General Theory},
  publisher = {Elsevier BV},
}

@Article{Faria1997,
  author    = {Teresa Faria},
  journal   = {Proceedings of the Royal Society of Edinburgh: Section A Mathematics},
  title     = {Normal forms for periodic retarded functional differential equations},
  year      = {1997},
  number    = {1},
  pages     = {21--46},
  volume    = {127},
  doi       = {10.1017/s0308210500023490},
  file      = {:C\:/Users/Bram Lentjes/Desktop/Utrecht University/Jaar 2/Masterthesis/Literature/Normal forms for periodic retarded functional differential equations Teresa Faria.pdf:PDF},
  groups    = {References DDEs, DDE - General Theory},
  publisher = {Cambridge University Press ({CUP})},
}

@Article{Bosschaert2020,
  author    = {Maikel M. Bosschaert and Sebastiaan G. Janssens and \mbox{Yu}. A. Kuznetsov},
  journal   = {{SIAM} Journal on Applied Dynamical Systems},
  title     = {Switching to Nonhyperbolic Cycles from Codimension Two Bifurcations of Equilibria of Delay Differential Equations},
  year      = {2020},
  month     = {jan},
  number    = {1},
  pages     = {252--303},
  volume    = {19},
  doi       = {10.1137/19m1243993},
  file      = {:C\:/Users/Bram Lentjes/Desktop/Utrecht University/Jaar 2/Masterthesis/Literature/Standard references for DDEs/Switching to Nonhyperbolic Cycles from Codimension Two Bifurcations of Equilibria in DDEs.pdf:PDF},
  groups    = {Standard References DDEs, DDE - Bifurcations},
  publisher = {Society for Industrial {\&} Applied Mathematics ({SIAM})},
}

@Article{Engelborghs2002,
  author    = {K. Engelborghs and T. Luzyanina and D. Roose},
  journal   = {{ACM} Transactions on Mathematical Software},
  title     = {Numerical bifurcation analysis of delay differential equations using {DDE}-{B}IF{T}OOL},
  year      = {2002},
  month     = {mar},
  number    = {1},
  pages     = {1--21},
  volume    = {28},
  doi       = {10.1145/513001.513002},
  groups    = {References DDEs, DDEs - Numerics/Software},
  publisher = {Association for Computing Machinery ({ACM})},
}

@Article{Bosschaert2024a,
  author    = {Bosschaert, M. M. and Kuznetsov, {\mbox{Yu}}. A.},
  journal   = {SIAM Journal on Applied Dynamical Systems},
  title     = {Bifurcation Analysis of {B}ogdanov–{T}akens Bifurcations in Delay Differential Equations},
  year      = {2024},
  issn      = {1536-0040},
  month     = jan,
  number    = {1},
  pages     = {553--591},
  volume    = {23},
  doi       = {10.1137/22m1527532},
  groups    = {DDE - Bifurcations},
  publisher = {Society for Industrial & Applied Mathematics (SIAM)},
}

@Misc{Sieber2014,
  author        = {Jan Sieber and Koen Engelborghs and Tatyana Luzyanina and Giovanni Samaey and Dirk Roose},
  month         = jun,
  title         = {{DDE}-{B}IF{T}OOL Manual - Bifurcation analysis of delay differential equations},
  year          = {2014},
  abstract      = {DDEBIFTOOL is a collection of Matlab routines for numerical bifurcation analysis of systems of delay differential equations with discrete constant and state-dependent delays. The package supports continuation and stability analysis of steady state solutions and periodic solutions. Further one can compute and continue several local and global bifurcations: fold and Hopf bifurcations of steady states; folds, period doublings and torus bifurcations of periodic orbits; and connecting orbits between equilibria. To analyse the stability of steady state solutions, approximations are computed to the rightmost, stability-determining roots of the characteristic equation which can subsequently be used as starting values in a Newton procedure. For periodic solutions, approximations to the Floquet multipliers are computed. The manual describes the structure of the package, its routines, and its data and method parameter structures.},
  archiveprefix = {arXiv},
  eprint        = {1406.7144},
  file          = {:http\://arxiv.org/pdf/1406.7144v4:PDF},
  groups        = {Numerical Implementation, DDEs - Numerics/Software},
  keywords      = {math.DS},
  primaryclass  = {math.DS},
}

@MastersThesis{Janssens2010,
  author  = {Sebastiaan G. Janssens},
  school  = {Utrecht University},
  title   = {On a Normalization Technique for Codimension Two Bifurcations of Equilibria of Delay Differential Equations},
  year    = {2010},
  comment = {Calculation of critical normal form coefficients with examples and introduction to DDE theory.},
  file    = {:C\:/Users/Bram Lentjes/Desktop/Utrecht University/Jaar 2/Masterthesis/Literature/Master Thesis Sebastiaan Janssens.pdf:PDF},
  groups  = {Standard References DDEs, DDE - Bifurcations},
  url     = {http://dspace.library.uu.nl/handle/1874/312252},
}

@Misc{Veltz2020,
  author      = {Veltz, Romain},
  month       = Jul,
  title       = {{BifurcationKit.jl}},
  year        = {2020},
  groups      = {DDEs - Numerics/Software},
  hal_id      = {hal-02902346},
  hal_version = {v1},
  institution = {{Inria Sophia-Antipolis}},
  keywords    = {pseudo-arclength-continuation ; periodic-orbits ; floquet ; gpu ; bifurcation-diagram ; deflation ; newton-krylov},
  pdf         = {https://hal.archives-ouvertes.fr/hal-02902346/file/354c9fb0d148262405609eed2cb7927818706f1f.tar.gz},
  url         = {https://hal.archives-ouvertes.fr/hal-02902346},
}

@Article{Article1,
  author    = {Bram Lentjes and Len Spek and Maikel M. Bosschaert and {\mbox {Yu}}. A. Kuznetsov},
  journal   = {Journal of Dynamics and Differential Equations},
  title     = {Periodic Center Manifolds for {DDEs} in the Light of Suns and Stars},
  year      = {2023},
  month     = {aug},
  doi       = {10.1007/s10884-023-10289-9},
  groups    = {DDE - (Center) Manifolds},
  publisher = {Springer Science and Business Media {LLC}},
}

@Article{Article2,
  author    = {Lentjes, Bram and Spek, Len and Bosschaert, Maikel M. and Kuznetsov, {\mbox{Yu}}. A.},
  journal   = {Journal of Differential Equations},
  title     = {Periodic normal forms for bifurcations of limit cycles in {DDE}s},
  year      = {2025},
  issn      = {0022-0396},
  month     = apr,
  pages     = {631--694},
  volume    = {423},
  doi       = {10.1016/j.jde.2025.01.064},
  groups    = {DDE - Bifurcations},
  publisher = {Elsevier BV},
}

@Article{Iooss1988,
  author    = {G Iooss},
  journal   = {Journal of Differential Equations},
  title     = {Global characterization of the normal form for a vector field near a closed orbit},
  year      = {1988},
  month     = {nov},
  number    = {1},
  pages     = {47--76},
  volume    = {76},
  comment   = {One of the Lemmas in 'Numerical Periodic Normalization paper of Yuri' uses a technique in here},
  doi       = {10.1016/0022-0396(88)90063-0},
  file      = {:C\:/Users/Bram Lentjes/Desktop/Utrecht University/Jaar 2/Masterthesis/Literature/Global Characterization of the Normal Form for a Vector Field Near a Closed Orbit Ioos.pdf:PDF},
  groups    = {Center Manifolds and Normalisation ODEs, ODE - General Theory},
  publisher = {Elsevier {BV}},
}

@Book{Iooss1999,
  author    = {G{\'{e}}rard Iooss and Moritz Adelmeyer},
  publisher = {{WORLD} {SCIENTIFIC}},
  title     = {Topics in Bifurcation Theory and Applications},
  year      = {1999},
  month     = {jan},
  doi       = {10.1142/3990},
  groups    = {Center Manifolds and Normalisation ODEs, Books - Standard references},
}

@Article{Witte2014,
  author    = {V. {De Witte} and W. Govaerts and {\mbox{Yu}}. A. Kuznetsov and H. G. E. Meijer},
  journal   = {Physica D: Nonlinear Phenomena},
  title     = {Analysis of bifurcations of limit cycles with {L}yapunov exponents and numerical normal forms},
  year      = {2014},
  month     = {feb},
  pages     = {126--141},
  volume    = {269},
  doi       = {10.1016/j.physd.2013.12.002},
  groups    = {Numerical Implementation, ODE - Bifurcations},
  publisher = {Elsevier {BV}},
}

@Article{Witte2013,
  author    = {V. {De Witte} and F. {Della Rossa} and W. Govaerts and {\mbox{Yu}}. A. Kuznetsov},
  journal   = {{SIAM} Journal on Applied Dynamical Systems},
  title     = {Numerical Periodic Normalization for Codim 2 Bifurcations of Limit Cycles: Computational Formulas, Numerical Implementation, and Examples},
  year      = {2013},
  month     = {jan},
  number    = {2},
  pages     = {722--788},
  volume    = {12},
  doi       = {10.1137/120874904},
  file      = {:C\:/Users/Bram Lentjes/Desktop/Utrecht University/Jaar 2/Masterthesis/Literature/Papers Yuri on periodic normal form/Second paper.pdf:PDF},
  groups    = {Center Manifolds and Normalisation ODEs, ODE - Bifurcations},
  publisher = {Society for Industrial {\&} Applied Mathematics ({SIAM})},
}

@Article{Kuznetsov2005,
  author    = {{\mbox{Yu}}. A. Kuznetsov and W. Govaerts and E. J. Doedel and A. Dhooge},
  journal   = {{SIAM} Journal on Numerical Analysis},
  title     = {Numerical Periodic Normalization for Codim 1 Bifurcations of Limit Cycles},
  year      = {2005},
  month     = {jan},
  number    = {4},
  pages     = {1407--1435},
  volume    = {43},
  doi       = {10.1137/040611306},
  groups    = {Center Manifolds and Normalisation ODEs, ODE - Bifurcations},
  publisher = {Society for Industrial {\&} Applied Mathematics ({SIAM})},
}

@Article{Hupkes2008,
  author    = {H. J. Hupkes and S .M. {Verduyn Lunel}},
  journal   = {Journal of Differential Equations},
  title     = {Center manifolds for periodic functional differential equations of mixed type},
  year      = {2008},
  month     = {sep},
  number    = {6},
  pages     = {1526--1565},
  volume    = {245},
  doi       = {10.1016/j.jde.2008.02.026},
  groups    = {References DDEs, MFDEs - General Theory},
  publisher = {Elsevier {BV}},
}

@Book{Guckenheimer1983,
  author    = {John Guckenheimer and Philip Holmes},
  publisher = {Springer New York},
  title     = {Nonlinear Oscillations, Dynamical Systems, and Bifurcations of Vector Fields},
  year      = {1983},
  doi       = {10.1007/978-1-4612-1140-2},
  groups    = {Books - Standard references},
}

@Book{Anagnostopoulou2023,
  author    = {Anagnostopoulou, Vasso and Pötzsche, Christian and Rasmussen, Martin},
  publisher = {Springer Nature Switzerland},
  title     = {Nonautonomous Bifurcation Theory: Concepts and Tools},
  year      = {2023},
  isbn      = {9783031298424},
  doi       = {10.1007/978-3-031-29842-4},
  groups    = {ODE - Bifurcations},
  issn      = {2364-4931},
  journal   = {Frontiers in Applied Dynamical Systems: Reviews and Tutorials},
}

@Article{Rasmussen2007,
  author    = {Rasmussen, Martin},
  journal   = {Journal of Differential Equations},
  title     = {Nonautonomous bifurcation patterns for one-dimensional differential equations},
  year      = {2007},
  issn      = {0022-0396},
  month     = mar,
  number    = {1},
  pages     = {267--288},
  volume    = {234},
  doi       = {10.1016/j.jde.2006.11.002},
  groups    = {ODE - Bifurcations},
  publisher = {Elsevier BV},
}

@Book{Carr1981,
  author    = {Carr, Jack},
  publisher = {Springer US},
  title     = {Applications of Centre Manifold Theory},
  year      = {1981},
  isbn      = {9781461259299},
  doi       = {10.1007/978-1-4612-5929-9},
  groups    = {ODE - (Center) Manifolds},
  issn      = {0066-5452},
  journal   = {Applied Mathematical Sciences},
}

@Book{Rasmussen2007a,
  author    = {Martin Rasmussen},
  publisher = {Springer Berlin Heidelberg},
  title     = {Attractivity and Bifurcation for Nonautonomous Dynamical Systems},
  year      = {2007},
  isbn      = {9783540712244},
  doi       = {10.1007/978-3-540-71225-1},
  groups    = {Books - Standard references},
  journal   = {Lecture Notes in Mathematics},
}

@Book{Church2021,
  author    = {Church, Kevin E. M. and Liu, Xinzhi},
  publisher = {Springer International Publishing},
  title     = {Bifurcation Theory of Impulsive Dynamical Systems},
  year      = {2021},
  isbn      = {9783030645335},
  doi       = {10.1007/978-3-030-64533-5},
  groups    = {, DDE - Bifurcations},
  issn      = {1574-0463},
  journal   = {IFSR International Series in Systems Science and Systems Engineering},
}

@Book{Coleman2012,
  author    = {Rodney Coleman},
  publisher = {Springer New York},
  title     = {Calculus on Normed Vector Spaces},
  year      = {2012},
  doi       = {10.1007/978-1-4614-3894-6},
  groups    = {Standard References DDEs, Books - Standard references},
}

@Article{Kaashoek1992,
  author    = {M. A. Kaashoek and S. M. {Verduyn Lunel}},
  journal   = {Transactions of the American Mathematical Society},
  title     = {Characteristic matrices and spectral properties of evolutionary systems},
  year      = {1992},
  month     = {feb},
  number    = {2},
  pages     = {479--517},
  volume    = {334},
  doi       = {10.1090/s0002-9947-1992-1155350-0},
  groups    = {References DDEs, DDE - Characteristic Matrices/Operators},
  publisher = {American Mathematical Society ({AMS})},
}

@Book{Engel2000,
  author    = {Klaus-Jochen Engel and Rainer Nagel},
  publisher = {Springer-Verlag},
  title     = {One-Parameter Semigroups for Linear Evolution Equations},
  year      = {2000},
  doi       = {10.1007/b97696},
  file      = {:C\:/Users/Bram Lentjes/Desktop/Utrecht University/Jaar 2/Masterthesis/Literature/Standard references for DDEs/One-Parameter_Semigroups_for_Linear_Evolution_Equa Engel and Nagel.pdf:PDF},
  groups    = {Standard References DDEs, Books - Standard references},
}

@Article{Riesz1914,
  author    = {Fr{\'{e}}d{\'{e}}ric Riesz},
  journal   = {Annales scientifiques de l{\textquotesingle}{\'{E}}cole normale sup{\'{e}}rieure},
  title     = {D{\'{e}}monstration nouvelle d{\textquotesingle}un th{\'{e}}or{\`{e}}me concernant les op{\'{e}}rations fonctionnelles lin{\'{e}}aires},
  year      = {1914},
  pages     = {9--14},
  volume    = {31},
  doi       = {10.24033/asens.669},
  groups    = {DDE - General Theory},
  publisher = {Societe Mathematique de France},
}

@Book{Murdock2003,
  author    = {Murdock, James},
  publisher = {Springer New York},
  title     = {Normal Forms and Unfoldings for Local Dynamical Systems},
  year      = {2003},
  isbn      = {9780387217857},
  doi       = {10.1007/b97515},
  issn      = {2196-9922},
  journal   = {Springer Monographs in Mathematics},
}

@Article{Spek2020,
  author    = {Len Spek and {\mbox{Yu}}. A. Kuznetsov and Stephan A. {van Gils}},
  journal   = {The Journal of Mathematical Neuroscience},
  title     = {Neural field models with transmission delays and diffusion},
  year      = {2020},
  month     = {dec},
  number    = {1},
  volume    = {10},
  comment   = {Neural field models for abstract DDEs},
  doi       = {10.1186/s13408-020-00098-5},
  file      = {:C\:/Users/Bram Lentjes/Desktop/Utrecht University/Jaar 2/Masterthesis/Literature/Neural Field Models with diffusion Spek Kuznetsov van Gils.pdf:PDF},
  groups    = {Standard References DDEs, DDEs Abstract - General Theory},
  publisher = {Springer Science and Business Media {LLC}},
}

@Misc{Janssens2019,
  author    = {Janssens, Sebastiaan G.},
  title     = {A class of abstract delay differential equations in the light of suns and stars},
  year      = {2019},
  copyright = {arXiv.org perpetual, non-exclusive license},
  doi       = {10.48550/ARXIV.1901.11526},
  groups    = {DDEs Abstract - General Theory},
  keywords  = {Dynamical Systems (math.DS), Functional Analysis (math.FA), FOS: Mathematics, 34K30, 47D06},
  publisher = {arXiv},
}

@Book{Kloeden2011,
  author    = {Peter Kloeden and Martin Rasmussen},
  publisher = {American Mathematical Society},
  title     = {Nonautonomous Dynamical Systems},
  year      = {2011},
  month     = {aug},
  doi       = {10.1090/surv/176},
  groups    = {Standard References DDEs, Books - Standard references},
}

@Article{Church2018,
  author    = {Kevin E. M. Church and Xinzhi Liu},
  journal   = {Journal of Differential Equations},
  title     = {Smooth centre manifolds for impulsive delay differential equations},
  year      = {2018},
  month     = {aug},
  number    = {4},
  pages     = {1696--1759},
  volume    = {265},
  doi       = {10.1016/j.jde.2018.04.021},
  groups    = {References DDEs, DDE - (Center) Manifolds},
  publisher = {Elsevier {BV}},
}

@Article{Hupkes2006,
  author    = {H. J. Hupkes and S. M. {Verduyn Lunel}},
  journal   = {Journal of Dynamics and Differential Equations},
  title     = {Center Manifold Theory for Functional Differential Equations of Mixed Type},
  year      = {2006},
  month     = {dec},
  number    = {2},
  pages     = {497--560},
  volume    = {19},
  doi       = {10.1007/s10884-006-9055-9},
  groups    = {MFDEs - General Theory},
  publisher = {Springer Science and Business Media {LLC}},
}

@Misc{Bosschaert2024c,
  author       = {Maikel M. Bosschaert and Bram Lentjes and Len Spek and {\mbox {Yu}}. A. Kuznetsov},
  howpublished = {GitHub},
  title        = {{P}eriodic{N}ormalization{DDE}s},
  year         = {2024},
  booktitle    = {GitHub},
  publisher    = {GitHub repository},
  url          = {https://github.com/mmbosschaert/PeriodicNormalizationDDEs.git},
}

@Article{Vanderbauwhede1987,
  author    = {A Vanderbauwhede and S. A {van Gils}},
  journal   = {Journal of Functional Analysis},
  title     = {Center manifolds and contractions on a scale of {B}anach spaces},
  year      = {1987},
  month     = {jun},
  number    = {2},
  pages     = {209--224},
  volume    = {72},
  doi       = {10.1016/0022-1236(87)90086-3},
  groups    = {Center Manifolds and Normalisation ODEs, ODE - (Center) Manifolds},
  publisher = {Elsevier {BV}},
}

@Article{Diekmann2008,
  author    = {Diekmann, Odo and Getto, Philipp and Gyllenberg, Mats},
  journal   = {SIAM Journal on Mathematical Analysis},
  title     = {Stability and Bifurcation Analysis of {V}olterra Functional Equations in the Light of Suns and Stars},
  year      = {2008},
  issn      = {1095-7154},
  month     = jan,
  number    = {4},
  pages     = {1023--1069},
  volume    = {39},
  doi       = {10.1137/060659211},
  groups    = {DDE - Bifurcations},
  publisher = {Society for Industrial & Applied Mathematics (SIAM)},
}

@Article{Breda2020,
  author    = {Dimitri Breda and Davide Liessi},
  journal   = {Journal of Dynamics and Differential Equations},
  title     = {Floquet Theory and Stability of Periodic Solutions of Renewal Equations},
  year      = {2020},
  month     = {jan},
  number    = {2},
  pages     = {677--714},
  volume    = {33},
  doi       = {10.1007/s10884-020-09826-7},
  groups    = {Standard References DDEs, RE - General Theory},
  publisher = {Springer Science and Business Media {LLC}},
}

@Article{LentjesCMODE,
  author    = {Lentjes, Bram and Windmolders, Mattias and Kuznetsov, {\mbox{Yu}}. A.},
  journal   = {International Journal of Bifurcation and Chaos},
  title     = {Periodic Center Manifolds for Nonhyperbolic Limit Cycles in {ODE}s},
  year      = {2023},
  issn      = {1793-6551},
  month     = dec,
  number    = {15},
  volume    = {33},
  doi       = {10.1142/s0218127423501845},
  groups    = {ODE - (Center) Manifolds},
  publisher = {World Scientific Pub Co Pte Ltd},
}

@Misc{Janssens2020,
  author    = {Janssens, Sebastiaan G.},
  title     = {A class of abstract delay differential equations in the light of suns and stars. II},
  year      = {2020},
  copyright = {arXiv.org perpetual, non-exclusive license},
  doi       = {10.48550/ARXIV.2003.13341},
  groups    = {DDEs Abstract - General Theory},
  keywords  = {Dynamical Systems (math.DS), Functional Analysis (math.FA), FOS: Mathematics, FOS: Mathematics, 34K30 (Primary) 47D06 (Secondary)},
  publisher = {arXiv},
}

@Article{Balazs2020,
  author    = {Balázs, István and Röst, Gergely},
  journal   = {Communications in Nonlinear Science and Numerical Simulation},
  title     = {{H}opf bifurcation for {W}right-type delay differential equations: The simplest formula, period estimates, and the absence of folds},
  year      = {2020},
  issn      = {1007-5704},
  month     = may,
  pages     = {105-188},
  volume    = {84},
  doi       = {10.1016/j.cnsns.2020.105188},
  groups    = {DDE - Bifurcations},
  publisher = {Elsevier BV},
}

@Book{Arnold1988,
  author    = {Arnold, V. I.},
  publisher = {Springer New York},
  title     = {Geometrical Methods in the Theory of Ordinary Differential Equations},
  year      = {1988},
  isbn      = {9781461210375},
  doi       = {10.1007/978-1-4612-1037-5},
  groups    = {ODE - General Theory},
  issn      = {0072-7830},
  journal   = {Grundlehren der mathematischen Wissenschaften},
}

@Misc{Bosschaert2025,
  author    = {Bosschaert, M. M. and Lentjes, B. and Spek, L. and Kuznetsov, {\mbox{Yu}}. A.},
  title     = {Numerical Periodic Normalization at Codim 1 Bifurcations of Limit Cycles in {DDE}s},
  year      = {2025},
  copyright = {Creative Commons Attribution 4.0 International},
  doi       = {10.48550/ARXIV.2505.19786},
  groups    = {DDE - Bifurcations},
  keywords  = {Dynamical Systems (math.DS), Functional Analysis (math.FA), FOS: Mathematics, FOS: Mathematics, 34K19, 37G15, 47M20, 65L07},
  publisher = {arXiv},
}

@Article{Elphick1987,
  author    = {C. Elphick and G. Iooss and E. Tirapegui},
  journal   = {Physics Letters A},
  title     = {Normal form reduction for time-periodically driven differential equations},
  year      = {1987},
  month     = {mar},
  number    = {9},
  pages     = {459--463},
  volume    = {120},
  doi       = {10.1016/0375-9601(87)90110-1},
  groups    = {ODE - General Theory},
  publisher = {Elsevier {BV}},
}

@Article{Elphick1987a,
  author    = {Elphick, C. and Tirapegui, E. and Brachet, M.E. and Coullet, P. and Iooss, G.},
  journal   = {Physica D: Nonlinear Phenomena},
  title     = {A simple global characterization for normal forms of singular vector fields},
  year      = {1987},
  issn      = {0167-2789},
  month     = nov,
  number    = {1–2},
  pages     = {95--127},
  volume    = {29},
  doi       = {10.1016/0167-2789(87)90049-2},
  groups    = {ODE - General Theory},
  publisher = {Elsevier BV},
}

\end{sloppypar}
\end{document}